\documentclass[a4paper,12pt, reqno]{amsart}
\usepackage{amsfonts}
\usepackage{amsmath, array, bigints}
\usepackage{amssymb}
\usepackage{mathrsfs}
\usepackage[colorlinks]{hyperref}
 \usepackage{graphicx}
\usepackage{enumerate}

 \usepackage{esint}
\usepackage[usenames]{color}

\usepackage{mathtools}
\makeatletter
\newcommand{\setword}[2]{%
  \phantomsection
  #1\def\@currentlabel{\unexpanded{#1}}\label{#2}%
}
\makeatother
\newtheorem{thm}{Theorem}[section]
\newtheorem{cor}[thm]{Corollary}
\newtheorem{lem}[thm]{Lemma}

\numberwithin{equation}{section}
\usepackage[english]{babel} 
\usepackage{blindtext}

\theoremstyle{definition}
\newtheorem{definition}[thm]{Definition}

\begin{document}

\allowdisplaybreaks 
\def\dist{{\operatorname{dist}}}
\def\loc{{\operatorname{loc}}}
\def\ess{{\operatorname{ess~sup}}}
\def\essi{{\operatorname{ess~inf}}}
\def\esslim{{\operatorname{ess~lim~inf}}}
\def\supp{{\operatorname{supp}}}
\renewcommand{\d}{\:\! \mathrm{d}}


 \title[Harnack inequality of superposition operators ]{Harnack inequality for superposition operators of mixed fractional order}

 \author[Souvik Bhowmick, Sekhar Ghosh, Vishvesh Kumar, and R. Lakshmi]{Souvik Bhowmick, Sekhar Ghosh, Vishvesh Kumar and R. Lakshmi}

\address[Souvik Bhowmick]{Department of Mathematics, National Institute of Technology Calicut, Kozhikode, Kerala, India - 673601}
\email{souvikbhowmick2912@gmail.com / souvik\_p230197ma@nitc.ac.in}

\address[Sekhar Ghosh]{Department of Mathematics, National Institute of Technology Calicut, Kozhikode, Kerala, India - 673601}
\email{sekharghosh1234@gmail.com / sekharghosh@nitc.ac.in}
\address[Vishvesh Kumar]{Department of Mathematical Sciences, Indian Institute of Technology (BHU),
Varanasi, Uttar Pradesh, 221005, India.}
\email{vishveshmishra@gmail.com / vishvesh.mat@iitbhu.ac.in}
\address[R. Lakshmi]{Department of Mathematics, National Institute of Technology Calicut, Kozhikode, Kerala, India - 673601}
\email{lakshmir1248@gmail.com / lakshmi\_p220223ma@nitc.ac.in}
\date{}

\begin{abstract} The main aim of this paper is to establish the H\"older continuity and the Harnack inequality for weak solutions to Dirichlet problems associated with superposition operators of mixed fractional order, thereby complementing our previous work \cite{BGKL2026}.

To achieve this, we extend the De Giorgi--Nash--Moser theory to the framework of superposition operators by introducing a novel {\it nonlocal superposition tail}, which appears to be the first contribution of its kind in the literature. The obtained results are new even in the classical linear case $p=2$, thereby illustrating the broader applicability of the analytical techniques developed in this work.

As intermediate steps toward the proof of the main results, we also establish a logarithmic estimate for weak supersolutions, local boundedness for weak subsolutions, a weak Harnack inequality for weak supersolutions, an expansion of positivity for weak supersolutions, and tail estimates for weak solutions.

\end{abstract}

\subjclass[2020]{35B65, 35D30, 35B45, 35R09, 35R11, 35M12}
\keywords{Regularity theory, Harnack inequality, Weak Harnack inequality, H\"older continuity, Nonlocal superposition of mixed fractional order}

\maketitle

\section{Introduction and main results}
In this paper, we investigate the regularity properties of weak solutions associated with the nonlinear fractional superposition operator
\begin{equation}\label{SP}
    A_{\mu,p}u:=\int_{[0,1]}(-\Delta)_p^s u\,\d\mu(s)
    =\int_{(0,1)}(-\Delta)_p^s u\,\d\mu(s)-\alpha\Delta_pu,
\end{equation}
where $p\in(1,\infty)$, $\mu$ is a nonnegative, nontrivial finite Borel measure on $[0,1]$, $\alpha:=\mu(\{1\})\geq 0$, and $\mu(\{0\})=0$. The operator $\Delta_p$ denotes the local $p$-Laplacian,
$$\Delta_pu=\operatorname{div}\bigl(|\nabla u|^{p-2}\nabla u\bigr)$$
while $(-\Delta)_p^s$ stands for the fractional $p$-Laplacian, defined for $s\in(0,1)$ by
$$(-\Delta)_p^su(x)
:=2C_{N,p,s}\lim_{\varepsilon\searrow 0}
\int_{\mathbb{R}^N\setminus B_\varepsilon(x)}
\frac{|u(x)-u(y)|^{p-2}(u(x)-u(y))}
{|x-y|^{N+sp}}\,\d y.$$

The normalization constant $C_{N,p,s}>0$ is chosen so that the family $\{(-\Delta)_p^s\}_{s\in[0,1]}$ exhibits the consistent limiting behaviors
$$\lim_{s\searrow 0}(-\Delta)_p^su
=(-\Delta)_p^0u
:=|u|^{p-2}u,$$
and
$$\lim_{s\nearrow 1}(-\Delta)_p^su
=(-\Delta)_p^1u
:=-\Delta_pu
=-\operatorname{div}\bigl(|\nabla u|^{p-2}\nabla u\bigr).$$

Therefore, the operator $A_{\mu,p}$ provides a unified framework that interpolates between local and nonlocal nonlinear diffusion operators through the measure $\mu$. Our primary objective is to establish a Harnack inequality and an improved H\"older regularity for weak solutions associated with this class of mixed superposition operators.

The general superposition operator $B_{\upsilon,p}$ of nonlinear fractional operators was introduced by Dipierro \textit{et al.} in \cite{DPSV2} for the case $p=2$ and subsequently extended to the range $1<p<\infty$ in \cite{DPSV}. It is defined by
\begin{equation}\label{superposition operator}
B_{\upsilon,p}u:=\int_{[0,1]}(-\Delta)_p^s u\,\d\upsilon(s),
\end{equation}
where $\upsilon$ is a signed measure on $[0,1]$ admitting the Jordan decomposition
$$\upsilon=\upsilon^+-\upsilon^-,$$
with $\upsilon^+$ and $\upsilon^-$ being nonnegative finite Borel measures satisfying suitable assumptions. The operator $B_{\upsilon,p}$ provides a unified framework encompassing a broad class of local and nonlocal nonlinear diffusion operators. In particular, the operator $A_{\mu,p}$ considered in this paper arises as a special case of $B_{\upsilon,p}$ when the underlying measure is nonnegative.

Before presenting our main results, we briefly review some related developments in the literature. Besides their intrinsic mathematical interest, superposition operators arise naturally in various applications, including mathematical biology and population dynamics, where they are used to model different dispersal mechanisms, ranging from standard Gaussian diffusion to L\'evy flight processes; see, for instance, \cite{DPSV25,DV21}. 

Several existence and qualitative results for superposition operators have been established in recent years; we refer to Dipierro \textit{et al.} \cite{DPSV25,DPSV2,DPSV}, Afonso \textit{et al.} \cite{ABB2025}, Bisci \textit{et al.} \cite{BMS2025}, Proietti Lippi and Sportelli \cite{PS2026}, the middle two authors together with Aikyn and Ruzhansky \cite{AGKR2025}, and the first three authors \cite{BGK2026a, BGK2026}.

Concerning maximum principles, Dipierro \textit{et al.} \cite{DLSV2025} showed, in the case $p=2$, that the maximum principle generally fails for the operator \eqref{superposition operator} due to the possible sign-changing nature of the measure $\upsilon$. Nevertheless, they proved that the maximum principle remains valid for the superposition operator \eqref{SP}. More recently, for $p\in(1,\infty)$, the middle two authors, together with Aikyn and Ruzhansky \cite{AGKR2025i}, established weak and strong maximum principles as well as logarithmic estimates for \eqref{SP} by introducing the following nonlocal superposition tail:
\begin{equation}\label{tail1}
        \operatorname{{Tail_1}}(u;x_0,r)=\left[\int_{(0,1)}r^{sp}\left(\int_{\mathbb{R}^N\setminus B_r(x_0)}\frac{|u(y)|^{p-1}}{|y-x_0|^{N+sp}}dy\right)d \mu(s)\right]^{\frac{1}{p-1}}.
    \end{equation}
    where $u \in W^{s,p}_{\mu}(\mathbb{R}^N)$ (see Section \ref{pre}) in a open ball $B_r(x_0)\subset \mathbb{R}^N$. {\it These results are obtained for \eqref{SP}, as the maximum principle is valid if and only if $\upsilon$ maintains a constant sign.} More recently, in the case $p=2$, Dipierro \textit{et al.} \cite{DLSV2026} established higher regularity for weak solutions of the mixed local--nonlocal superposition problem, proving that if $u\in H_0^1(\Omega)$ is a weak solution, then $u\in W^{2,q}(\Omega)$. Specifically, they considered the problem
\begin{align*}
    \int_{[0,\Tilde{s}]}(-\Delta)^{s} u \d \upsilon(s)- \Delta u=&f(x) \text{ in  }\Omega, \nonumber \\
    u=&0 \text{ in  }\mathbb{R}^N\setminus \Omega,
\end{align*}
where $\Omega$ is open bounded subset of $\mathbb{R}^N$ with boundary class of $C^1$, $\Tilde{s}\in [0,\frac{1}{2})$, $f\in L^q(\Omega)$ with $q\in (1,\infty)$. 

We now discuss several regularity results corresponding to specific examples of superposition operators. Indeed, a comprehensive regularity theory for the general operator \eqref{MP} is not yet available. In the particular case where $\mu=\delta_1$, the superposition operator coincides with the $p$-Laplacian, namely $A_{\mu,p}=-\Delta_p$, for which a rich regularity theory has been developed; see the classical references \cite{Lind19,MZ97}. It is worth recalling that, in the local framework, the Harnack inequality and H\"older continuity are known to be equivalent for a large class of elliptic problems.

When $\mu$ is the Dirac measure concentrated at $s\in(0,1)$, the superposition operator reduces to the fractional $p$-Laplacian, that is, $A_{\mu,p}=(-\Delta)^s_p$. In this setting, a Harnack inequality is known to hold for globally nonnegative solutions. In the linear case $p=2$, Kassmann \cite{Kass11} showed that the classical Harnack inequality fails for sign-changing (nodal) solutions and provided a counterexample demonstrating that the global nonnegativity assumption is indispensable. To overcome this limitation, he introduced a modified Harnack inequality containing an additional term on the right-hand side, namely the {\it tail}, which quantifies the influence of the solution outside the domain and reflects the intrinsically nonlocal nature of the fractional Laplacian.

Subsequently, Di Castro, Kuusi, and Palatucci \cite{DKP2014,DKP2016} developed a comprehensive regularity theory for the fractional $p$-Laplacian with $p\in(1,\infty)$. By systematically exploiting the nonlocal tail, they established several fundamental results, including Caccioppoli-type inequalities, local boundedness, logarithmic estimates, local H\"older continuity, Harnack inequalities, and weak Harnack inequalities. Since then, the regularity theory of nonlocal equations has become an active and rapidly growing research area, generating a vast literature that cannot be surveyed exhaustively here. We refer the reader to \cite{Cozz17,DKP2014,DKP2016,Kass07} and the references therein for further developments.

In the mixed local--nonlocal setting corresponding to $\mu=\delta_1+\delta_s$ with $s\in(0,1)$, the superposition operator takes the form
$A_{\mu,p}=(-\Delta)_p+(-\Delta)^s_p$. In the linear case $p=2$, Foondun \cite{Foo09} established the Harnack inequality and local H\"older continuity for nonnegative solutions. Later, Garain and Kinnunen \cite{GK2022} developed a nonlinear regularity theory for $p\in(1,\infty)$ by introducing a suitable nonlocal tail adapted to the mixed local--nonlocal structure. Their work yielded several fundamental estimates, including the Caccioppoli inequality, local boundedness, logarithmic estimates, local H\"older continuity, the Harnack inequality, and the weak Harnack inequality. We also refer the reader to \cite{DeFG24,Foo09,GK2022} and references therein.

Very recently, inspired by the pioneering work of Di Castro, Kuusi, and Palatucci \cite{DKP2016}, we initiated the regularity theory for general superposition operators $A_{\mu,p}$. In particular, we established in \cite{BGKL2026}, for the first time in this general framework, Caccioppoli-type estimates, local boundedness, and weak Harnack inequalities through the introduction of the following normalized tail, which naturally encodes the contribution of all fractional orders present in the superposition operator:
\begin{equation}\label{tail2}
        \operatorname{{Tail_2}}(u;x_0,r)=\left[\int_{(0,1)}C_{N,s,p}r^{sp}\left(\int_{\mathbb{R}^N\setminus B_r(x_0)}\frac{|u(y)|^{p-1}}{|y-x_0|^{N+sp}}dy\right)d \mu(s)\right]^{\frac{1}{p-1}}.
    \end{equation}
Moreover, in \cite{BGKL2026}, we established local H\"older continuity for weak solutions of $A_{\mu,p}$ under the additional assumption that $\bar{s}=\inf_{\supp\{\mu\}}s>0$. This assumption plays a crucial role in our approach, since the argument relies heavily on the behavior of the normalized tail \eqref{tail2}. In particular, the method breaks down when $\bar{s}=0$, where both the superposition operator $A_{\mu,p}$ and the tail \eqref{tail2} exhibit significantly different behavior. Furthermore, establishing a Harnack inequality for weak solutions for Dirichlet problems involving $A_{\mu,p}$  presents additional difficulties arising from the interaction between the superposition structure of $A_{\mu,p}$  and the nonlocal tail term \eqref{tail2}, even in the case $\bar{s}>0$.

The primary goal of this paper is to establish local H\"older continuity and Harnack inequalities for weak solutions of Dirichlet problems associated with the superposition operator $A_{\mu,p}$ without imposing any additional assumptions on the measure $\mu$. In particular, we remove the restriction $\bar{s}>0$, which was essential in our previous work \cite{BGKL2026}, and develop a new approach capable of handling the singular regime where the support of $\mu$ may accumulate at zero.

Let us now present the assumptions on the superposition operator and discuss their applicability to a broad class of problems. We consider the measure $\mu$ associated with the superposition operator
$$
A_{\mu, p} u:=\int_{[0,1]}(-\Delta)_{p}^{s} u \d \mu(s)
=\int_{(0,1)}(-\Delta)_{p}^{s} u \d \mu(s)-\alpha \Delta_p u,
$$
where $\mu$ is a nonnegative finite Borel measure on $[0,1]$, nontrivial on $(0,1)$, and satisfies $\mu(\{0\})=0$. The assumption $\mu(\{0\})=0$ is imposed to ensure that the operator $A_{\mu,p}$ preserves the appropriate scaling properties. We set
$$
\alpha := \mu(\{1\}) \ge 0, \qquad \Sigma := \operatorname{supp}(\mu).
$$

Since $\mu$ is nontrivial, the support $\Sigma$ is nonempty. Consequently, there exists an exponent $s'\in (0,1]$ such that
\begin{equation}\label{m4}
\mu\left([s',1]\right)>0.
\end{equation}
Furthermore, by \eqref{m4}, there exists an exponent $\bar{s_\sharp}\in [s',1]$ satisfying $\mu\left([\bar{s_\sharp},1]\right)>0.$ Accordingly, we define
\begin{equation}
    s_\sharp =
    \begin{cases}
        \bar{s_\sharp}, & \text{if } \alpha=0,\\
        1, & \text{if } \alpha>0.
    \end{cases}
\end{equation}

Note that in the subsequent analysis, $s_\sharp$ plays a pivotal role in determining the qualitative behavior of solutions. The corresponding fractional Sobolev critical exponent is defined by $p_{s_\sharp}^{*}:=\frac{Np}{N-s_\sharp p}.$ Although condition \eqref{m4} does not uniquely determine $s_\sharp$, the strength of our regularity and integrability results increases with larger values of $s_\sharp$. Therefore, it is natural to choose $s_\sharp$ as large as possible while maintaining the validity of \eqref{m4}.

Throughout this work, we restrict our attention to the case $\alpha>0$ in \eqref{SP}, which corresponds to a genuinely mixed local--nonlocal superposition operator. The purely nonlocal case $\alpha=0$ requires different techniques and will be investigated in a forthcoming work.

\par In the present work, we focus on studying the regularity of weak solutions to the following problem
\begin{align}\label{MP} \tag{P}
    A_{\mu, p} u &= 0 \quad \text{in } \Omega,
\end{align}
where $p \in (1,\infty)$ and $\Omega$ is a bounded domain in $\mathbb{R}^N$.

The following examples illustrate classes of measures $\mu$ on $(0,1)$ with $\alpha>0$ that satisfy our assumptions and are therefore encompassed by the results of this paper:
\begin{enumerate}
    \item When $\mu=\delta_s$, where $\delta_s$ denotes the Dirac measure concentrated at $s\in(0,1)$, and $\alpha=1$, the operator $A_{\mu,p}$ reduces to the mixed local--nonlocal operator
$
-\Delta+(-\Delta)^s
$
in the case $p=2$, and to the mixed local--nonlocal $p$-Laplacian
$-\Delta_p+(-\Delta)_p^s$ for general $p\in(1,\infty)$.
    \item When $\mu=\epsilon\delta_s$ with $\epsilon\in(0,1)$, the operator $A_{\mu,p}$ reduces to $ -\alpha \Delta_{p} +\epsilon(-\Delta_{p})^s$.
    \item When $\mu=a_1\delta_{s_1}+a_2\delta_{s_2}$, where $a_1,a_2\in(0,+\infty)$ and $1>s_1>s_2>0$, the operator $A_{\mu,p}$ takes the form $-\alpha \Delta_p+ a_1 (-\Delta)_p^{s_1}++ a_2 (-\Delta)_p^{s_2}$.
    \item When $\mu= \sum_{k=0}^{n} a_k \delta_{s_k}$ with $n>2$, $a_k>0$, and $1>s_n>\cdots>s_1>s_0>0$, the operator $A_{\mu,p}$ is given by $-\alpha\Delta_p+\sum_{k=0}^{n} a_k (-\Delta)_p^{s_k}$.
    \item When $\mu= \sum_{k=0}^{+\infty} a_k \delta_{s_k}$ with $\sum_{k=0}^{+\infty}a_k\in(0,+\infty)$, $a_0>0$, $a_k\geq 0$, and $1>\cdots>s_2>s_1>s_0>0,$ the operator $A_{\mu,p}$ is represented by $-\alpha\Delta_p+\sum_{k=0}^{+\infty} a_k (-\Delta)_p^{s_k}$.
    \item When $\mu= \sum_{k=0}^{+\infty} a_k \delta_{s_k}$ with $\sum_{k=0}^{+\infty} a_k \in (0, +\infty)$ such that $a_0>0$ and $a_k \geq 0$ and $1 >s_0>s_1>s_2>\ldots> 0,$ then mixed local-nonlocal operators associated with a convergent series of Dirac measures is $-\alpha\Delta_p+\sum_{k=0}^{+\infty} a_k (-\Delta)_p^{s_k}$.
    
    \item When $\d\mu(s)=f(s) \d s$, where $f\not\equiv 0$ is a nonnegative measurable function, the operator $A_{\mu,p}$ becomes $-\alpha \Delta_p+\int_{(0,1)} f(s)(-\Delta)_p^s \d s$, where $\d s$ denotes the Lebesgue measure. This example incorporates both nonlinear effects and an infinite, possibly uncountable, family of fractional operators.
\end{enumerate}

Before stating our main results, we highlight the principal novelties of the present work. Recall that, in our previous paper \cite{BGKL2026}, we were unable to establish the Harnack inequality for \eqref{MP} by means of either of the superposition tails \eqref{tail1} or \eqref{tail2} associated with $A_{\mu, p}$. Moreover, we obtained the local H\"older continuity under the assumption $\bar{s}=\inf_{\supp\{\mu\}}s>0$. In this study, we obtain the local H\"older continuity without the restriction $\bar{s}>0$. To overcome this difficulty, we introduce, for the first time, the following normalized superposition tail:
\begin{equation}\label{Tailintro}
\operatorname{Tail}(u;x_0,r)
=
\left[
\int_{(0,1)}
C_{N,s,p}r^{p}
\left(
\int_{\mathbb{R}^N\setminus B_r(x_0)}
\frac{|u(y)|^{p-1}}{|y-x_0|^{N+sp}}\,dy
\right)
d\mu(s)
\right]^{\frac{1}{p-1}}.
\end{equation}
As will become evident throughout the paper, this new tail is particularly well adapted to the study of local H\"older continuity and Harnack inequalities for superposition operators. We believe that it will also prove useful in future investigations of regularity theory for mixed-order superposition operators and related nonlocal equations. A further noteworthy feature of the present work is that our results apply simultaneously to a broad class of superposition operators involving infinitely many, and possibly uncountably many, fractional phases. In particular, all of the results established here are completely new for the examples $(3)$--$(8)$ listed above. This is especially significant for examples $(6)$ and $(7)$, which were not covered by our previous work \cite{BGKL2026} and exhibit genuinely mixed local--nonlocal as well as mixed-order behavior. The absence of a single underlying differentiability scale, together with the intricate interaction among different fractional orders, makes these examples substantially more challenging than the classical local or fixed-order nonlocal settings.
Finally, one of the central contributions of this paper is the development of new techniques for handling the superposition tail \eqref{Tailintro}. The analysis of this quantity permeates the entire work and plays a decisive role in the proofs of our regularity results.

We are now ready to state the main results obtained in this paper. 
Our first result is the local H\"older continuity for weak solutions involving the ``tail" \eqref{Tailintro}.
\begin{thm}[Local H\"older continuity]\label{Holder t}
    Let $u$ be a weak solution of \eqref{MP}. Then $u$ is locally H\"older continuous in $\Omega$. Moreover, there exists $\sigma\in(0, \frac{p}{p-1})$ and positive constant $C:=C(N,p,\Sigma,\mu)$ such that
    \begin{equation*}
        \operatorname{osc}_{B_\epsilon(x_0)}u\leq C\left(\frac{\epsilon}{r} \right)^\sigma \left(  \operatorname{Tail}(u;x_0,{r})+C\left(\fint_{B_{2r}(x_0)}|u|^p \d x \right)^\frac{1}{p} \right), 
    \end{equation*}
    where $B_{2r}(x_0)\subset \Omega$ such that $r\in (0,1]$ and $\epsilon\in (0,r]$.
\end{thm}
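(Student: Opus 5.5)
We follow the De Giorgi--Nash--Moser scheme of Di Castro--Kuusi--Palatucci, adapted to the mixed setting as in Garain--Kinnunen. The new tail \eqref{Tailintro} carries the factor $r^{p}$ instead of $r^{sp}$, and this is what makes it work when $\inf\Sigma=0$. We use the following intermediate results, proved later in the paper: local boundedness for weak subsolutions, a logarithmic estimate for weak supersolutions, and a Caccioppoli inequality. All three are written with $\operatorname{Tail}$ and with constants depending only on $N,p,\Sigma,\mu$ and $\alpha>0$. Two facts drive everything. First, since $r\le 1$ and $s\le 1$, we have $r^{p}\le r^{sp}$, so every nonlocal error term $\int C_{N,s,p}r^{-sp}\cdots\,d\mu(s)$ appearing in the Caccioppoli or logarithmic estimate can be absorbed into $r^{-p}$ times the local $\alpha$-energy. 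Second, the local part $\alpha\Delta_p$ provides the Sobolev and Poincaré gain, with $s_\sharp=1$.

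\textbf{Oscillation decay.} Fix $B_{2r}(x_0)\subset\Omega$. Set $r_j=\sigma_0^j r/2$ for a small $\sigma_0\in(0,1/4]$ to be fixed. Local boundedness gives
\[
\omega_0:=2\sup_{B_{r/2}}|u|\le C\Bigl(\operatorname{Tail}(u;x_0,r)+\bigl(\fint_{B_{2r}}|u|^p\,dx\bigr)^{1/p}\Bigr).
\]
Define $\omega_j=(\sigma_0)^{\sigma j}\omega_0$. By induction we show
\[
\operatorname{osc}_{B_{r_j}}u\le\omega_j .
\]
For the inductive step, let $u_j$ be either $u-\inf_{B_{r_j}}u$ or $\sup_{B_{r_j}}u-u$, choosing the one that is $\ge\omega_j/2$ on at least half of $2B_{r_{j+1}}$. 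Then $u_j\ge0$ in $B_{r_j}$.

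Apply the logarithmic lemma to $u_j+d$ with $d=\varepsilon\omega_j$, and then Poincaré in $B_{2r_{j+1}}$. This bounds
\[
\bigl|\{u_j\le 2\varepsilon\omega_j\}\cap B_{2r_{j+1}}\bigr|\Big/|B_{2r_{j+1}}|\le \frac{C}{\log(1/\varepsilon)}\bigl(1+\text{tail term}\bigr).
\]
Here the tail term is $d^{1-p}(r_{j+1}/r_j)^{p}\operatorname{Tail}(u_j;x_0,r_j)^{p-1}$. A De Giorgi iteration on truncations $k_i=\varepsilon\omega_j(1+2^{-i})$, using the Caccioppoli estimate and the Sobolev gain from $\alpha>0$, then yields $u_j\ge\varepsilon\omega_j$ on $B_{r_{j+1}}$ once $\varepsilon$ is small. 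Hence
\[
\operatorname{osc}_{B_{r_{j+1}}}u\le(1-\varepsilon)\omega_j .
\]
Choosing $\sigma$ with $\sigma_0^{\sigma}\ge1-\varepsilon$ closes the induction. Finally, comparing $\epsilon$ with the scale $r_j$ gives the stated estimate with exponent $\sigma$. The restriction $\sigma<p/(p-1)$ comes from the tail summation below.

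\textbf{Main obstacle: the tail estimate.} We must show
\[
\operatorname{Tail}(u_j;x_0,r_j)^{p-1}\le C\,\omega_j^{p-1},
\]
uniformly in $j$ and with no lower bound on $s$. Split $\mathbb R^N\setminus B_{r_j}$ into the annuli $B_{r_{i-1}}\setminus B_{r_i}$, $i\le j$, plus the complement of $B_{r_0}$. On the $i$-th annulus, $|u_j|\le\omega_{i-1}+\omega_j\le 2\omega_{i-1}$, so its contribution is bounded by
\[
\int_{(0,1)} C_{N,s,p}\, r_j^{p}\, r_i^{-sp}\,\omega_{i-1}^{p-1}\,d\mu(s).
\]
Using $C_{N,s,p}/s$ bounded (from the normalization) and $r_j^{p}r_i^{-sp}\le (r_j/r_i)^{sp}\, r_j^{p(1-s)}\le (r_j/r_i)^{sp}$, we get a factor $\sigma_0^{(j-i)sp}$ against $\omega_{i-1}^{p-1}=\sigma_0^{-(j-i+1)\sigma(p-1)}\omega_j^{p-1}$.

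For $s$ near $0$ this geometric decay degenerates, and the plain $r^{sp}$ tail fails at exactly this point. To fix it we use the genuine factor $r_j^{p}$ of \eqref{Tailintro} rather than discarding $r_j^{p(1-s)}$. Write
\[
r_j^{p}r_i^{-sp}=(r_j/r_i)^{p}\,r_i^{p(1-s)},\qquad r_i^{p(1-s)}\le1 .
\]
This gives decay $\sigma_0^{(j-i)p}$ for every $s$. The sum over $i$ then converges provided $\sigma(p-1)<p$, i.e. $\sigma<p/(p-1)$. The outer contribution is bounded by $\operatorname{Tail}(u;x_0,r)+\sup_{B_{r/2}}|u|$, which is at most a constant times $\omega_0$. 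With this uniform bound, $\varepsilon$ and $\sigma_0$ can be fixed independently of $j$, and the induction closes.
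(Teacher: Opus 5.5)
Your proposal is correct and follows the paper's own route (Lemma 4.1, the Di Castro--Kuusi--Palatucci induction). It uses the same annular splitting of $\operatorname{Tail}(u_j;x_0,r_j)$, with the key rewriting $r_j^{p}r_i^{-sp}=(r_j/r_i)^{p}r_i^{p(1-s)}\le (r_j/r_i)^{p}$ and the boundedness of $C_{N,p,s}/s$, so the sum converges exactly when $\sigma<\frac{p}{p-1}$, followed by the logarithmic measure estimate and a De Giorgi iteration. Two bookkeeping points should be fixed when you write it out: define $\omega_0$ as the right-hand side $C\bigl(\operatorname{Tail}(u;x_0,r)+(\fint_{B_{2r}}|u|^p\,dx)^{1/p}\bigr)$, as the paper does with $\phi(r_0)$, rather than $2\sup_{B_{r/2}}|u|$, since otherwise the outer tail contribution is not controlled by $\omega_0$; and tie $\varepsilon$ to $\sigma_0$ (the paper takes $\tau=\kappa^{\frac{p}{p-1}-\sigma}$) so that the tail term $\varepsilon^{1-p}\sigma_0^{\,p-\sigma(p-1)}$ stays bounded in both the measure estimate and the De Giorgi step, because letting $\varepsilon\to0$ with $\sigma_0$ fixed would not work.
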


    It is worth emphasizing that, in our previous work \cite{BGKL2026}, local H\"older continuity was established under the assumption $
\bar{s}:=\inf_{s\in\Sigma}s>0, $
with the H\"older exponent restricted to the range
$ 0<\sigma<\frac{\bar{s}p}{p-1}. $
Theorem~\ref{Holder t} significantly improves this result in two respects. First, the condition $\bar{s}>0$ is completely removed. Second, the admissible range of H\"older exponents is enlarged to
$ 0<\sigma<\frac{p}{p-1}.$
Thus, Theorem~\ref{Holder t} provides a substantially stronger regularity result in a considerably more general setting.

Our next result establishes a Harnack inequality for weak solutions of \eqref{MP}. To the best of our knowledge, this is the first Harnack inequality proved in the general framework of superposition operators and, in particular, the first result of its kind applicable to operators involving infinitely many, possibly uncountably many, fractional phases.

\begin{thm}[Harnack inequality]\label{T1.5}
Let $u$ be a weak solution of \eqref{MP} with $u\geq 0$ in $B_R(x_0)\subset \Omega$. There exists a constant $C:=C(N,p,\Sigma,\mu)>0$ such that
\begin{align*}
    \ess_{B_{\frac{r}{2}}(x_0)}u \leq C\essi_{B_r(x_0)} u + C\bigg( \frac{r}{R}\bigg)^\frac{p}{p-1}\operatorname{Tail}(u_-;x_0,R),
\end{align*}
where {$B_r(x_0) \subset B_{R}(x_0)$} with $r\in (0,1]$.
\end{thm}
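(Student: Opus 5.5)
The plan is to follow the Di Castro--Kuusi--Palatucci strategy, adapted to the mixed local--nonlocal setting as in Garain--Kinnunen, with every nonlocal contribution measured through the new tail \eqref{Tailintro}. The proof combines two ingredients announced in the abstract and assumed established earlier:
\begin{enumerate}
\item a weak Harnack inequality for weak supersolutions,
$$\Big(\fint_{B_{r}}u^{t}\,\d x\Big)^{1/t}\le C\,\essi_{B_{r}}u+C\Big(\frac rR\Big)^{\frac p{p-1}}\operatorname{Tail}(u_-;x_0,R)$$
for some small $t>0$;
\item a local boundedness estimate for weak subsolutions with an interpolation parameter $\delta\in(0,1]$,
$$\ess_{B_{\rho/2}}u\le \delta\,\operatorname{Tail}(u_+;x_0,\rho/2)+C\delta^{-\gamma}\Big(\fint_{B_\rho}u_+^p\,\d x\Big)^{1/p}.$$
\end{enumerate}
The key point is that \eqref{Tailintro} carries the factor $r^{p}$ rather than $r^{sp}$. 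Hence, for $|y-x_0|\ge R\ge r$, the bound
$$r^{p}|y-x_0|^{-N-sp}\le (r/R)^{p}R^{p}|y-x_0|^{-N-sp}$$
gives $\operatorname{Tail}(v;x_0,r)\le (r/R)^{p/(p-1)}\operatorname{Tail}(v;x_0,R)$ on the far region, uniformly in $s$. This is exactly the scaling appearing in the statement, and it needs no lower bound on $\operatorname{supp}\mu$. The restriction $r\le1$ ensures that the mixed operator's local and nonlocal scalings are compared correctly: we use $r^{p}\le r^{sp}$, which lets the $\alpha\Delta_p$ part dominate the Caccioppoli constants.

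\textbf{Step 1: tail estimate for solutions.} Since $u$ is a solution, hence a supersolution, the first step bounds the positive tail on intermediate scales by interior quantities. Concretely, I would show
$$\operatorname{Tail}(u_+;x_0,\rho)\le C\,\ess_{B_\rho}u+C\Big(\frac\rho R\Big)^{\frac p{p-1}}\operatorname{Tail}(u_-;x_0,R),\qquad \rho\le r.$$
To prove it, test the equation with $\phi^p(u-k)$, where $k=2\ess_{B_\rho}u$ and $\phi$ is a cutoff in $B_\rho$, so that $u-k<0$ on the support of $\phi$. The nonlocal term then yields $\int_{(0,1)}C_{N,s,p}\int\int (u(y)-k)_+^{p-1}\phi^p/|x-y|^{N+sp}\,d\mu$ on the favorable side. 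All remaining terms are controlled by $k$ times local energies, plus the negative part of $u$ outside $B_R$. Multiplying by $\rho^{p}$ normalizes everything into \eqref{Tailintro}. The local $p$-Laplacian term contributes $\rho^{-p}k^{p-1}|B_\rho|$ after normalization, and the fractional terms contribute $\rho^{-sp}\rho^{p}\le1$ factors. The split of $\mathbb R^N\setminus B_\rho$ into $B_R\setminus B_\rho$ (where $u\ge0$) and $\mathbb R^N\setminus B_R$ must be handled carefully.

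\textbf{Step 2: combine and absorb.} Insert Step 1 into the local boundedness estimate on balls $B_\rho\subset B_r$, bound $\fint u^p$ by $(\ess u)^{1-t/p}\,(\fint u^t)^{1/p}$, and apply Young's inequality. This gives, for $r/2\le\rho'<\rho\le r$,
$$\ess_{B_{\rho'}}u\le \tfrac12\ess_{B_\rho}u+C(\rho-\rho')^{-\theta}r^{\theta}\Big(\fint_{B_r}u^t\Big)^{1/t}+C\Big(\frac rR\Big)^{\frac p{p-1}}\operatorname{Tail}(u_-;x_0,R).$$
Here $\delta$ is chosen small, depending only on the data, so that the tail term from Step 1 is absorbed into the $\frac12\ess u$ term. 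A standard iteration lemma then removes the $\frac12$ term. Finally, the weak Harnack inequality bounds $(\fint_{B_r}u^t)^{1/t}$ by the infimum plus the tail, which concludes the proof.

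\textbf{Main obstacle.} I expect Step 1 to be hardest. It requires keeping the constants uniform in $s\in\Sigma$ near $0$, where $C_{N,s,p}$ degenerates and $|x-y|^{-N-sp}$ decays slowly. I would handle this by using, for $x\in B_{\rho/2}$ and $y\notin B_\rho$, the comparison $|y-x_0|\le 2|y-x|$. The resulting $2^{N+sp}\le 2^{N+p}$ factors are then uniform in $s$, and all $\rho$-powers are absorbed using $\rho\le1$ together with the finiteness of $\mu$.
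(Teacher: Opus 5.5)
Your proposal is correct and follows essentially the same route as the paper. The paper also derives the tail estimate (Lemma \ref{lmn6.3}) by testing with $(u-2\ess_{B_\rho}u)w^p$, inserts it into the local boundedness bound with parameter $\delta$, and uses covering, the $L^p$--$L^t$ interpolation with Young's inequality, and the Garain--Kinnunen iteration lemma, before closing with the preliminary weak Harnack inequality (Lemma \ref{lmn6.4}). One point you should state explicitly: uniformity as $s\to0^+$ comes from the factor $s$ in $C_{N,s,p}$ cancelling the $1/s$ produced by $\int_{\mathbb{R}^N\setminus B_\rho}|x-y|^{-N-sp}\,\d y$, together with $\rho^{-sp}\le\rho^{-p}$ for $\rho\le 1$; the $2^{N+sp}$ comparison alone does not handle this.
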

\begin{cor}
   Let $u$ be a weak solution of \eqref{MP} with $u\geq 0$ in $B_R(x_0)\subset \Omega$. There exists a constant $C:=C(N,p,\Sigma,\mu)>0$ such that
\begin{align*}
   \ess_{B_{{r}}(x_0)}u \leq C\essi_{B_r(x_0)}+ C\bigg( \frac{r}{R}\bigg)^\frac{p}{p-1}\operatorname{Tail}(u_-;x_0,R),
\end{align*}
where $B_r(x_0) \subset B_{\frac{R}{2}}(x_0)$ with $r\in (0,1]$.
\end{cor}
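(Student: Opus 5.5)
The corollary should follow from Theorem~\ref{T1.5} by a covering argument on the ball $B_r(x_0)$ with $B_r(x_0)\subset B_{R/2}(x_0)$. (The missing $u$ after $\essi_{B_r(x_0)}$ in the statement is a typo; the intended quantity is $\essi_{B_r(x_0)}u$.)

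\emph{Step 1: local application of the theorem.} Fix a point $z\in B_r(x_0)$ and a small radius $\rho=r/4$. Since $|z-x_0|<r\le R/2$, we have $B_{2\rho}(z)\subset B_{3r/2}(x_0)\subset B_R(x_0)$. Hence $u\ge 0$ on $B_{R/2}(z)$, and in fact on a ball $B_{R'}(z)\subset B_R(x_0)$ with $R'=R-|z-x_0|\ge R/2$. Applying Theorem~\ref{T1.5} centred at $z$ with inner radius $\rho$ and outer radius $R'$ gives
\[
\ess_{B_{\rho/2}(z)}u\le C\essi_{B_\rho(z)}u+C\Big(\frac{\rho}{R'}\Big)^{\frac{p}{p-1}}\operatorname{Tail}(u_-;z,R').
\]

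\emph{Step 2: comparing tails.} We need $\operatorname{Tail}(u_-;z,R')\le C\,\operatorname{Tail}(u_-;x_0,R)$. Since $u_-=0$ in $B_R(x_0)\supset B_{R'}(z)$, the inner integral only sees $y\notin B_R(x_0)$. For such $y$,
\[
|y-z|\ge |y-x_0|-R/2\ge |y-x_0|/2,
\]
so $|y-z|^{-N-sp}\le 2^{N+p}|y-x_0|^{-N-sp}$ uniformly in $s\in(0,1)$. Together with $R'^{\,p}\le R^p$, this gives the comparison with constant $2^{(N+p)/(p-1)}$. The point is that this tail carries $r^p$ rather than $r^{sp}$, so no $s$-dependent constant appears. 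Also $\rho/R'\le 2r/R$.

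\emph{Step 3: chaining.} Cover $\overline{B_r(x_0)}$ by balls $B_{\rho/2}(z_i)$, $z_i\in B_r(x_0)$. The number of balls is bounded by $N$ alone, by scaling. Each satisfies $\ess_{B_{\rho/2}(z_i)} u\le C\,m_i+C(r/R)^{p/(p-1)}T$, where $T=\operatorname{Tail}(u_-;x_0,R)$ and $m_i=\essi_{B_\rho(z_i)}u$.

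Now relate $m_i$ to $\essi_{B_r(x_0)}u$. I would take $z_i$ in $B_{r}(x_0)$ so that $B_\rho(z_i)\subset B_{5r/4}(x_0)$. Then it is simpler to invoke the theorem at $x_0$ directly with a larger radius: $\ess_{B_{r}(x_0)}$ is controlled via balls of radius $\rho$, and $\essi_{B_\rho(z_i)}u\ge \essi_{B_{5r/4}(x_0)}u$, which is only $\le \essi_{B_r(x_0)}u$. That is the wrong direction, so instead one chains through overlapping balls. Pick $w$ with $u(w)$ near $\essi_{B_r}u$, lying in some $B_{\rho/2}(z_j)$, and connect $z_i$ to $z_j$ through a chain of $K(N)$ balls with overlapping halves. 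At each link, $\essi_{B_\rho(z_k)}u\le \ess_{B_{\rho/2}(z_{k+1})}u$ because of the overlap, and the theorem bounds the latter by $C\essi_{B_\rho(z_{k+1})}u$ plus the tail. Iterating $K$ times ends at $\essi$ over a ball containing a neighbourhood of $w$ inside $B_r$, up to essential-infimum care, which is handled by choosing the $z_k$ inside $B_r(x_0)$ with balls $B_\rho(z_k)\cap B_r$ of comparable measure. Alternatively, apply the weak Harnack inequality from the earlier work, whose constant is uniform.

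\emph{Main obstacle.} The delicate step is this chaining, specifically ensuring the final infimum is taken over $B_r(x_0)$ itself rather than an enlarged ball. I would first try to use the weak Harnack inequality on $B_{\rho}(z_k)\cap B_r(x_0)$ to pass infima to averages. Averages over subsets of $B_r$ dominate $\essi_{B_r}u$ correctly once one notes $\fint_{B_{\rho/2}(z)}u\ge\essi_{B_r}u$ whenever $B_{\rho/2}(z)\subset B_r(x_0)$. The constants then accumulate only as $C^{K(N)}$, which depends on $N,p,\Sigma,\mu$.
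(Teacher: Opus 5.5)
Your argument is correct in its main line, but it is much longer than necessary, and the detour comes from a direction mix-up. The paper gives no separate proof: the corollary is meant to follow at once from Theorem~\ref{T1.5}. Since $B_r(x_0)\subset B_{R/2}(x_0)$ means $B_{2r}(x_0)\subset B_R(x_0)$, apply the theorem with $2r$ in place of $r$:
\[
\ess_{B_r(x_0)}u\le C\essi_{B_{2r}(x_0)}u+C\Big(\frac{2r}{R}\Big)^{\frac{p}{p-1}}\operatorname{Tail}(u_-;x_0,R)\le C\essi_{B_r(x_0)}u+C2^{\frac{p}{p-1}}\Big(\frac{r}{R}\Big)^{\frac{p}{p-1}}\operatorname{Tail}(u_-;x_0,R).
\]
The second inequality uses $\essi_{B_{2r}(x_0)}u\le\essi_{B_r(x_0)}u$. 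An infimum over an enlarged ball is smaller, so having it on the right-hand side is the stronger statement. This is exactly the direction you want.

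You rejected this route (``that is the wrong direction'') because you conflated it with your covering. There the infimum sits on a small ball $B_\rho(z_i)$ and really does have to be pushed down to $\essi_{B_r(x_0)}u$. Likewise, the ``main obstacle'' you name, ensuring the final infimum is over $B_r(x_0)$ rather than an enlarged ball, is not an obstacle.

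That said, your chaining does work. Step~2 is correct. Each link uses $|B_\rho(y_k)\cap B_{\rho/2}(y_{k+1})|>0$ to get $\essi_{B_\rho(y_k)}u\le\ess_{B_{\rho/2}(y_{k+1})}u$. The endpoint needs no pointwise $w$: the finitely many sets $B_{\rho/2}(z_k)\cap B_r(x_0)$ cover $B_r(x_0)$, so for some $j$ one has $\essi_{B_{\rho/2}(z_j)\cap B_r(x_0)}u=\essi_{B_r(x_0)}u$. Hence $\essi_{B_\rho(z_j)}u\le\essi_{B_r(x_0)}u$.

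Drop the weak-Harnack alternative in your last paragraph. Weak Harnack bounds averages from above by infima. Combined with $\fint_{B_{\rho/2}(z)}u\ge\essi_{B_r(x_0)}u$, it gives $\essi_{B_r(x_0)}u\lesssim\essi_{B_\rho(z)}u$, which is the opposite of what you need.

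What your route genuinely buys is that it invokes Theorem~\ref{T1.5} only at radius $r/4\le\frac14$, which is within its stated hypothesis of radius in $(0,1]$. The one-line argument uses radius $2r$, and this exceeds $1$ when $r\in(\frac12,1]$. In that range one must observe that, in the paper's proofs, the condition $r\le 1$ enters only through $\sup_{s\in\Sigma}r^{-sp}\le r^{-p}$. For radii up to $2$ that bound costs only an extra factor $2^p$. The price of your route is the constant $C^{K}$ and the additional tail comparison.
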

Note that, in the case $u \geq 0$ in $\mathbb{R}^N$, the above Harnack inequality reduces to the classical Harnack inequality.

The following result concerns the weak Harnack inequality associated with \eqref{MP}, which incorporates the superposition tail \eqref{Tailintro}.

\begin{thm}[Weak Harnack inequality]\label{T1.6}
    Let $u$ be a weak supersolution of \eqref{MP} with $u\geq 0$ in $B_R(x_0) \subset \Omega$. There exists a constant $C:=C(N,p,s,\Sigma)>0$ such that
    \begin{align*}
        \bigg(\fint_{B_{\frac{r}{2}}(x_0)}u^t \d x \bigg)^\frac{1}{t} \leq C \essi_{B_r(x_0)}u +C\bigg(\frac{r}{R}\bigg)^{\frac{p}{p-1}} \operatorname{Tail}\big(u_-;x_0,R\big),
    \end{align*}
    where $r\in(0,1]$, $B_r(x_0)\subset B_{\frac{R}{2}}(x_0)$ and $0<t<\eta(p-1)$.
\end{thm}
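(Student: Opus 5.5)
The plan is to follow the De Giorgi--Moser scheme of Di Castro--Kuusi--Palatucci and Garain--Kinnunen, adapted to the new tail \eqref{Tailintro}. By translation we take $x_0=0$. Set $\delta:=(r/R)^{p/(p-1)}\operatorname{Tail}(u_-;0,R)$ and consider $w:=u+\delta$, or $w:=u+\delta+\varepsilon$ with $\varepsilon\downarrow 0$ at the end, which is nonnegative in $B_R$. Since the operator only depends on differences, $w$ is still a weak supersolution. The proof then has two halves. The first gives a reverse H\"older/Moser iteration for small negative powers of $w$, which yields $\essi w$ from below. The second, a crossover lemma, passes from negative to small positive powers.

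\textbf{Step 1 (Caccioppoli estimate for negative powers).} We test the weak supersolution inequality with $\phi=w^{-q}\psi^p$, where $q>p-1$ (and also $1<q<p-1$ later), and $\psi\in C_c^\infty(B_\rho)$ is a cutoff with $|\nabla\psi|\lesssim 1/(\rho-\rho')$. The local term $\alpha\Delta_p$ gives the standard gradient estimate for $w^{(p-1-q)/p}$. For each $s\in(0,1)$, the nonlocal term splits into the $B_\rho\times B_\rho$ part and the part with $y\notin B_\rho$.

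For the inner part we use the algebraic inequalities of \cite{DKP2016} with $|\psi(x)-\psi(y)|^p\le \rho^{-p}|x-y|^p(\rho-\rho')^{-p}\rho^p$. Integrating $|x-y|^{p-sp-N}$ over $B_{2\rho}$ gives $C\rho^{(1-s)p}/((1-s)p)$. Combined with $C_{N,s,p}\sim (1-s)$, this yields a bound uniform in $s$, of the form $C(\rho-\rho')^{-p}\int w^{p-1-q}$.

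For the outer part, where $w(y)\ge 0$ on $B_R$ and $w(y)\ge -u_-(y)$ outside, we integrate the contribution against $d\mu(s)$. This gives
\[
\sup_{x\in\operatorname{supp}\psi}\int_{(0,1)}C_{N,s,p}\int_{\mathbb R^N\setminus B_\rho}\frac{u_-(y)^{p-1}}{|y|^{N+sp}}\,dy\,d\mu(s),
\]
up to factors $(\rho/(\rho-\rho'))^{N+p}$, where $\rho\le 1$ lets $|x-y|^{-N-sp}$ be compared uniformly. The normalization $r^{p}$ in \eqref{Tailintro}, rather than $r^{sp}$, makes this at most $\rho^{-p}\operatorname{Tail}(u_-;0,R)^{p-1}$. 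This term is absorbed through $w^{-q}\le \delta^{1-p}w^{p-1-q}$, since $\rho^{-p}\operatorname{Tail}^{p-1}\delta^{1-p}\le C\rho^{-p}$ after using $\rho\le r$. Thus both contributions scale like $(\rho-\rho')^{-p}$, exactly as in the local $p$-Laplacian. The coercivity comes only from $\alpha>0$ through the gradient term, and we use the Sobolev inequality with exponent $p^*_{s_\sharp}=p^*_1$.

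\textbf{Step 2 (Moser iteration).} Iterating Step 1 with $q>p-1$ and Sobolev gives, for every $\gamma>0$,
\[
\essi_{B_{r/2}}w \ge c\Bigl(\fint_{B_r}w^{-\gamma}\Bigr)^{-1/\gamma}.
\]
Iterating with $1<q<p-1$ gives the reverse H\"older inequality
\[
\Bigl(\fint_{B_{\rho'}}w^{t}\Bigr)^{1/t}\le C\Bigl(\fint_{B_\rho}w^{t'}\Bigr)^{1/t'}
\]
for $0<t'<t<\eta(p-1)$, where $\eta=N/(N-p)$ (or any finite number if $p\ge N$). This is what fixes the range of $t$.

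\textbf{Step 3 (crossover).} This is the step I expect to be the main obstacle. Test with $\phi=w^{1-p}\psi^p$, which is the logarithmic estimate, presumably already established earlier in the paper with the new tail. It shows that $\log w$ is in BMO on $B_{r}$, again uniformly in $s$ thanks to the $C_{N,s,p}$ normalization and to $\delta$ absorbing the tail term. The John--Nirenberg inequality then gives some $\gamma_0>0$ with
\[
\fint w^{\gamma_0}\,\fint w^{-\gamma_0}\le C.
\]
The delicate point is to check that the tail produced by the logarithmic estimate is controlled by $\delta^{p-1}$ at every scale $\rho\le r$. This holds because the tail over $\mathbb R^N\setminus B_\rho$ splits into the region $B_R\setminus B_\rho$, where $u_-=0$, and the region outside $B_R$, where $|y-x|\ge R/2$ and which gives $(\rho/R)^{p}\operatorname{Tail}(u_-;0,R)^{p-1}\le \delta^{p-1}$.

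Chaining Steps 2 and 3, we first pass from $t$ down to $\gamma_0$ by finitely many reverse H\"older steps, then cross over, then use the infimum bound. This gives $\bigl(\fint_{B_{r/2}}w^t\bigr)^{1/t}\le C\essi_{B_r}w$. Finally we replace $w$ by $u+\delta$ and use $(a+b)^t\lesssim a^t+b^t$ to get the claim.
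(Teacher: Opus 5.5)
Your proposal is correct in outline, but it reaches the key small-exponent estimate by a different route than the paper.

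\textbf{The paper's route.} The paper never uses negative powers or John--Nirenberg. The bound $\bigl(\fint_{B_r}u^\kappa\bigr)^{1/\kappa}\le C\essi_{B_r}u+C(r/R)^{p/(p-1)}\operatorname{Tail}(u_-;x_0,R)$ is Lemma \ref{lmn6.4}. It is proved, as in Di Castro--Kuusi--Palatucci, from the expansion of positivity (Lemmas \ref{lmn6.1}--\ref{lmn6.2}, i.e.\ the logarithmic estimate plus a De Giorgi iteration) and the Krylov--Safonov type covering Lemma \ref{lmn6.30}. Theorem \ref{T1.6} then only needs three further ingredients: the Caccioppoli estimate of Lemma \ref{lmn7.1} for $(u+d)^{(p-q)/p}$ with $q\in(1,p)$, the Sobolev inequality \eqref{sob}, and finitely many Moser steps between $B_{r/2}$ and $B_{3r/4}$. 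The choice $d=\frac12(r/R)^{p/(p-1)}\operatorname{Tail}(u_-;x_0,R)+\epsilon$ neutralises the tail exactly as in your Step 1.

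\textbf{Your route.} You replace the expansion-of-positivity and covering machinery with the classical Moser--Trudinger chain: a negative-power iteration down to $\essi$, then BMO of $\log w$ plus John--Nirenberg for the crossover.
\begin{itemize}
\item What it buys: it avoids De Giorgi-type iterations entirely, and it gives the extra bound $\essi w\ge c\bigl(\fint w^{-\gamma}\bigr)^{-1/\gamma}$ for every $\gamma>0$.
\item Cost 1: you need a Caccioppoli inequality for exponents $q>p-1$, which the paper does not prove. The algebraic inequality for the inner nonlocal term is standard, and the local term supplies the coercivity.
\item Cost 2: you need the BMO bound on off-centre subballs, whereas Lemma \ref{lmn3.4} is stated only for balls centred at $x_0$. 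Your splitting of the tail at $\partial B_R$ handles this.
\item Cost 3: some radius bookkeeping. To end with $\essi_{B_r}$ rather than $\essi_{B_{r/2}}$, run the negative iteration and the crossover on a ball such as $B_{5r/4}$, using subballs inside $B_{3r/2}\subset B_{3R/4}$.
\end{itemize}

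\textbf{Two slips to fix.}
\begin{itemize}
\item The reverse H\"older step needs $0<q<p-1$ in your convention. The paper's test function $v^{1-q}w^p$ with $q\in(1,p)$ corresponds to your exponent $q-1\in(0,p-1)$. With $1<q<p-1$ you only reach exponents below $\eta(p-2)$, and that range is empty for $p\le 2$.
\item The outer contribution in Step 1 is $C R^{-p}[\operatorname{Tail}(u_-;0,R)]^{p-1}$, not $\rho^{-p}[\operatorname{Tail}(u_-;0,R)]^{p-1}$. Only with $R^{-p}$ does $w^{-q}\le\delta^{1-p}w^{p-1-q}$ turn it into $r^{-p}\le\rho^{-p}$.
\end{itemize}
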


As intermediate steps in the proof of the main results, we establish several auxiliary results, including a logarithmic estimate for weak supersolutions and a local boundedness estimate for weak subsolutions with respect to the superposition tail \eqref{Tailintro}; these are stated below. We note that analogous results were established in \cite{BGKL2026} for the superposition tail \eqref{tail2}. The present formulation extends those results to the tail \eqref{Tailintro}. Since the arguments are largely analogous, we provide only the parts of the proofs that require modification in the present setting.

\begin{lem}[Logarithmic estimate]\label{lmn3.4}
  Let $p\in (1,\infty)$ and let $u$ be a weak {supersolution} of \eqref{MP} such that $u\geq 0$ in $B_{R}:=B_R(x_0)\subset \Omega$. Then for any $B_r:=B_r(x_0)\subset B_{\frac{R}{2}}(x_0)$ and $d>0$, there exists a positive constant $C=C(N,p,\Sigma,\mu)$ such that
  \begin{align*}
     & \alpha \int_{B_r}|\nabla \log(u+d)|^p \d x+\int_{(0,1)}C_{N,p,s} \bigg(\int_{B_r}\int_{B_r}\left|\log\left(\frac{u(x)+d}{u(y)+d}\right)\right|^{p}\d \nu \bigg) \d \mu(s)\nonumber\\
        &\leq  C r^N \bigg(d^{1-p} {{R}^{-p}}[\operatorname{Tail}(u_-;x_0,R)]^{p-1}+\sup\limits_{s\in\Sigma} r^{-sp}\bigg)+C\alpha r^{N-p},
  \end{align*}
   where $\Sigma:=\operatorname{supp}\{\mu\}$.

\end{lem}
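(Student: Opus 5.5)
We follow the standard Di Castro--Kuusi--Palatucci and Garain--Kinnunen logarithmic argument. The test function is $\phi=(u+d)^{1-p}\psi^p$, where $\psi\in C_c^\infty(B_{3r/2})$, $0\le\psi\le1$, $\psi\equiv1$ on $B_r$ and $|\nabla\psi|\le C/r$. Note that $B_{3r/2}\subset B_{3R/4}\subset B_R$, so $u+d\ge d>0$ on $\supp\psi$. Hence $\phi$ is admissible, and the supersolution inequality gives
\[
0\le \alpha\int|\nabla u|^{p-2}\nabla u\cdot\nabla\phi\,\d x+\int_{(0,1)}C_{N,p,s}\int\!\!\int |u(x)-u(y)|^{p-2}(u(x)-u(y))(\phi(x)-\phi(y))\,\d\nu\,\d\mu(s),
\]
where $\d\nu=|x-y|^{-N-sp}\d x\d y$. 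Each term is then estimated separately, with constants uniform in $s$.

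\textbf{Local term.} Expanding $\nabla\phi$ and applying Young's inequality yields
\[
\alpha\int_{B_r}|\nabla\log(u+d)|^p\,\d x\le C\alpha\int|\nabla\psi|^p\,\d x\le C\alpha r^{N-p}.
\]
This is exactly as in \cite{GK2022}.

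\textbf{Nonlocal terms, for fixed $s$.} We split $\mathbb{R}^N\times\mathbb{R}^N$ into $B_{3r/2}\times B_{3r/2}$ and the two cross regions.

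On $B_{3r/2}\times B_{3r/2}$, the pointwise algebraic inequality of \cite[Lemma 1.3]{DKP2016} applies. Its constants depend only on $p$, hence are independent of $s$. It gives the negative contribution
\[
-c\int_{B_r}\int_{B_r}\Big|\log\tfrac{u(x)+d}{u(y)+d}\Big|^p\d\nu
\]
plus the error $C\int\!\!\int_{B_{3r/2}^2}|\psi(x)-\psi(y)|^p\,\d\nu$. Using $|\psi(x)-\psi(y)|\le\min\{1,C|x-y|/r\}$, this error is bounded by
\[
C\Big(\frac{1}{(1-s)p}+\frac{1}{sp}\Big)r^{N-sp}.
\]
After multiplying by $C_{N,p,s}$, the factors $s(1-s)$ implicit in $C_{N,p,s}$ cancel the singular prefactors. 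This leaves $C r^{N}r^{-sp}\le C r^N\sup_{s\in\Sigma}r^{-sp}$, and integrating in $\mu$ is harmless since $\mu$ is finite.

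On the cross region, where $y\notin B_{3r/2}$ and $x\in B_{3r/2}$, we use $u\ge0$ in $B_R$ to bound the integrand by
\[
(u+d)^{1-p}(x)\big((u(x)-u(y))_+\big)^{p-1}\le C\big(1+d^{1-p}(u(y))_-^{p-1}\big).
\]
For $y\in B_R\setminus B_{3r/2}$ the $u_-$ part vanishes, and the remaining integral $\int_{B_{3r/2}}\int_{\mathbb{R}^N\setminus B_{3r/2}}$ gives $C r^{N-sp}/s$. For $y\notin B_R$ we use $|y-x|\ge c|y-x_0|$, which yields
\[
Cd^{1-p}r^N\int_{\mathbb{R}^N\setminus B_R}\frac{u_-(y)^{p-1}}{|y-x_0|^{N+sp}}\,\d y.
\]

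\textbf{Main obstacle: recovering the new tail.} The difficulty is to convert this last quantity into the tail \eqref{Tailintro}, which carries the weight $R^{p}$ instead of $R^{sp}$, uniformly in $s$. After multiplying by $C_{N,p,s}$ and integrating in $\mu$, the definition gives directly
\[
C d^{1-p}r^N R^{-p}\,[\operatorname{Tail}(u_-;x_0,R)]^{p-1}.
\]
So the new normalization fits this term exactly, and no comparison between $R^{sp}$ and $R^p$ is needed. What does need care is the first piece of the cross term, $C_{N,p,s}\,r^{N-sp}/s$. Here we rely on the fact that $C_{N,p,s}/s$ stays bounded as $s\to0$, which is the point where $\bar s=0$ is allowed. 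This gives $Cr^N\sup_{s\in\Sigma}r^{-sp}$, finite since $r\le1$.

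Collecting all terms and absorbing the negative contributions yields the claimed estimate, with $C=C(N,p,\Sigma,\mu)$.
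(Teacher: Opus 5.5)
Your proposal is correct and follows essentially the same route as the paper. The paper also runs the standard argument with the test function $(u+d)^{1-p}\eta^p$ and rewrites only the tail contribution: as you observe, the $r^p$-normalization of the new tail absorbs the factor $R^{-p}$ directly from the definition, and the boundedness of $C_{N,p,s}/s$ as $s\to 0$ controls the $r^{N-sp}/s$ term uniformly in $s$. The one difference is minor: the paper splits the cross region at $B_{2r}$, so that $|x-y|\ge r/2$ for $x\in B_{3r/2}$, $y\notin B_{2r}$, whereas your split at $B_{3r/2}$ needs the true but unstated remark $\psi(x)\le C\,\operatorname{dist}(x,\partial B_{3r/2})/r$ to get the bound $C r^{N-sp}/s$ on the cross term.
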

Subsequently, we establish the following local boundedness result.
\begin{thm}[Local boundedness]\label{bddness}
    Let $u$ be a weak subsolution to the problem \eqref{MP} and $p\in(1,\infty)$. There exists a constant $C:=C(N,p,\Sigma,\mu)>0$ such that
    \begin{equation}\label{LB}
        \ess_{B_{\frac{r}{2}(x_0)}} u \leq
            \delta \operatornamewithlimits{Tail}\bigg(u_+;x_0,\frac{r}{2}\bigg)+C\delta^{-\frac{(p-1)\eta}{(\eta-1)p}}\bigg( \fint_{B_r(x_0)}u_+^p \d x \bigg)^\frac{1}{p},
    \end{equation}
    where $B_r=B_r(x_0)\subset \Omega$ with $r\in (0,1]$, $\delta\in (0,1]$, $\eta$ defined in \eqref{grad sob} and $\Sigma:=\supp\{\mu\}$.
\end{thm}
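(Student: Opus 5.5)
Our plan is a De Giorgi iteration built on a Caccioppoli inequality adapted to the tail \eqref{Tailintro}, in the spirit of \cite{DKP2016,GK2022,BGKL2026}.

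\textbf{Step 1 (Caccioppoli).} For $k\ge 0$, $w=(u-k)_+$ and a cutoff $\psi\in C_c^\infty(B_\rho)$ with $|\nabla\psi|\le C/(\rho-\rho')$, we test the subsolution inequality with $w\psi^p$. Call this estimate (C). The local part gives $\alpha\int|\nabla(w\psi)|^p$ on the left and $C\alpha(\rho-\rho')^{-p}\int_{B_\rho} w^p$ on the right. For each $s\in(0,1)$, the nonlocal part splits in the standard way.
\begin{itemize}
\item The $B_\rho\times B_\rho$ part has a nonnegative Gagliardo term, which we drop. Its cutoff error is bounded by $C_{N,p,s}\int\int w^p|\psi(x)-\psi(y)|^p|x-y|^{-N-sp}$. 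Since $\mu$ need not stay away from $0$, we bound $|\psi(x)-\psi(y)|\le \min\{1,|x-y|/(\rho-\rho')\}$. This gives $\le C(1-s)^{-1}s^{-1}C_{N,p,s}(\rho-\rho')^{-sp}\int w^p$.
\item The known asymptotics $C_{N,p,s}\simeq s(1-s)$ make this uniformly bounded in $s$. Since $\rho\le 1$, $(\rho-\rho')^{-sp}\le(\rho-\rho')^{-p}$, so after integrating in $\mu$ the term is $\le C\mu((0,1))(\rho-\rho')^{-p}\int w^p$. This is exactly why the tail carries $r^p$ rather than $r^{sp}$.
\item The off-diagonal term is $\int_{B_\rho} w\psi^p\int_{\mathbb{R}^N\setminus B_\rho} w(y)^{p-1}|x-y|^{-N-sp}\,dy\,dx$. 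For $x\in\operatorname{supp}\psi$ we have $|y-x|\ge \frac{\rho-\rho'}{\rho}|y-x_0|$. Hence it is $\le C(\rho/(\rho-\rho'))^{N+p}\,\rho^{-p}\,[\operatorname{Tail}(w;x_0,\rho)]^{p-1}\int_{B_\rho}w$.
\end{itemize}

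\textbf{Step 2 (iteration).} Take radii $r_j=\frac r2+2^{-j-1}r$, levels $k_j=k(1-2^{-j})$, and $w_j=(u-k_j)_+$. Set $Y_j=\fint_{B_{r_j}} w_j^p$. We use the gradient Sobolev inequality \eqref{grad sob}, with exponent $\eta>1$, on $w_{j+1}\psi_j$. The coercivity comes from $\alpha>0$, so $s_\sharp=1$. Together with Chebyshev, $|\{w_j>2^{-j-1}k\}\cap B_{r_j}|\le (2^{j+1}/k)^pY_j|B_{r_j}|$, and $\int w_{j+1}\le k^{1-p}2^{(j+1)(p-1)}\int w_j^p$. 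Inserting these into (C) should give
\[
Y_{j+1}\le C\,b^{j}\,r^{-p}\Bigl(1+\frac{[\operatorname{Tail}(u_+;x_0,r/2)]^{p-1}}{k^{p-1}}\Bigr)\frac{r^p}{k^{p(1-1/\eta)}}\,Y_j^{1+\frac{\eta-1}{\eta}}.
\]
We bound $\operatorname{Tail}(w_j;x_0,r_j)\le C\operatorname{Tail}(u_+;x_0,r/2)$, since $r_j\ge r/2$ and $w_j\le u_+$. Here $r^{p}$ in the tail matches $r^{-p}$ from the local gradient scaling, and the powers of $r$ cancel.

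\textbf{Step 3 (choice of $k$).} Choose $k\ge\delta\operatorname{Tail}(u_+;x_0,r/2)$, so the bracket is $\le 1+\delta^{1-p}$. Then the classical lemma on fast geometric convergence gives $Y_j\to0$ provided $Y_0\le c\,\delta^{(p-1)\eta/(\eta-1)}k^p$. This forces
\[
k=\delta\operatorname{Tail}(u_+;x_0,r/2)+C\delta^{-\frac{(p-1)\eta}{(\eta-1)p}}\Bigl(\fint_{B_r}u_+^p\Bigr)^{1/p},
\]
which yields \eqref{LB}.

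\textbf{Main obstacle.} We expect the hardest part to be the uniform-in-$s$ control of the nonlocal terms as $s\to0$ and $s\to1$, which is needed to integrate against $\mu$ without assuming $\bar s>0$. The cutoff term depends on the constant $C_{N,p,s}$ compensating $s^{-1}(1-s)^{-1}$. The tail term depends on $r\le1$ to trade $\rho^{-sp}$ for $\rho^{-p}$; keeping $\rho^{-sp}$ would leave an $s$-dependent weight that cannot be factored out. If the two-sided asymptotics for $C_{N,p,s}$ are not available in the needed form, we would instead use the normalization in the definition of $W^{s,p}_\mu$ from Section~\ref{pre}.
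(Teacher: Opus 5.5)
Your proposal is correct and follows essentially the same route as the paper. Both arguments run a De Giorgi iteration built from the Caccioppoli estimate of Lemma~\ref{nw}(f), the gradient Sobolev inequality \eqref{sob} (available because $\alpha>0$), and Lemma~\ref{MI Lemma}. The key step is the one you identify: the factor $r^p$ in \eqref{Tail} bounds the off-diagonal term by $r^{-p}[\operatorname{Tail}]^{p-1}$ uniformly in $s$, and the choice $\delta\operatorname{Tail}(u_+;x_0,\frac r2)\le k$ produces the $\delta^{1-p}$ factor and hence the exponent $-\frac{(p-1)\eta}{(\eta-1)p}$.

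The differences are bookkeeping only. The paper uses intermediate radii $\bar r_j$ and levels $\bar k_j$, with $\phi_j^p\ge(\bar k_j-k_j)^{p-1}\bar\phi_j$, where you use a Chebyshev step. It also delegates the uniform-in-$s$ cutoff term to \cite{BGKL2026}, which your $C_{N,p,s}\simeq s(1-s)$ computation makes explicit. One small fix: take $\operatorname{supp}\psi$ inside an intermediate ball (e.g.\ $B_{(\rho+\rho')/2}$). Otherwise the comparison $|y-x|\gtrsim\frac{\rho-\rho'}{\rho}|y-x_0|$ for $y\notin B_\rho$ is not valid.
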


The paper is organized as follows. In Section \ref{pre}, we recall several preliminary results, develop the tools needed to introduce the appropriate solution space associated with problem \eqref{MP}, and define the notion of weak solutions to \eqref{MP}. In Section \ref{sec3}, we derive a logarithmic estimate for the problem under consideration. Moreover, we prove the local boundedness of weak subsolutions to \eqref{MP}. In Section \ref{sec4}, we establish the local H\"older continuity of weak solutions to \eqref{MP}. In Section \ref{sec5}, we prove both the Harnack inequality and the weak Harnack inequality. Finally, in Section \ref{sec6}, we present some remarks and future directions towards the regularity theory for \eqref{MP}.

\section{Preliminaries} \label{pre}

This section aims to construct a functional analytic framework based on appropriate notions of fractional Sobolev spaces relavent to our problem and their properties, which are crucial for analyzing our problem. For further details on this direction, we refer to \cite{BGKL2026,DPSV1,DPSV2,DPSV} and the references therein.

\noindent We begin by introducing some notation that will be used throughout the paper.

\begin{itemize}
    \item For any $u\in \mathbb{R}$, we define $    u_+:=\max\{u,0\},
    \qquad
    u_-:=\max\{-u,0\}=-\min\{u,0\}.$ Consequently,
    $u=u_+-u_-,
    \qquad
    |u|=u_++u_-.$

    \item The symbol $\fint$ denotes the average integral over the corresponding domain.

    \item For $x_0\in\mathbb{R}^N$ and $r>0$, $B_r(x_0)$ denotes the open ball centered at $x_0$ with radius $r$. Moreover, we write $\Sigma:=\supp(\mu).$

    \item For notational convenience, we introduce the following quantities associated with the fractional part of the operator:
    \begin{align*}
        \mathcal{A}(u(x,y))
        &:=|u(x)-u(y)|^{p-2}(u(x)-u(y)),
        ~~\text{ and }~~
        \d\nu
        :=\frac{\d x\,\d y}{|x-y|^{N+sp}}.
    \end{align*}

    \item Let $\Omega\subset\mathbb{R}^N$ be a bounded domain. We denote by $W_0^{1,p}(\Omega)$ the usual Sobolev space defined by
    \begin{align*}
        W_0^{1,p}(\Omega)
        :=\left\{
        u\in W^{1,p}(\Omega):
        u=0 \text{ in } \mathbb{R}^N\setminus\Omega
        \right\}.
    \end{align*}
\end{itemize}
Recall that, for $s\in(0,1)$, the Gagliardo seminorm of a function $u$ is defined by
\begin{equation*}
    [u]_{s,p}
    :=
    \left(
    C_{N,p,s}
    \iint_{\mathbb{R}^{2N}}
    \frac{|u(x)-u(y)|^p}{|x-y|^{N+sp}}
    \,\d x\,\d y
    \right)^{\frac1p},
\end{equation*}
where
$$
C_{N,p,s}
:=
\frac{\frac{sp}{2}(1-s)2^{2s-1}}{\pi^{\frac{N-1}{2}}}
\frac{\Gamma\left(\frac{N+ps}{2}\right)}
{\Gamma\left(\frac{p+1}{2}\right)\Gamma(2-s)}
$$
is the normalization constant. Note that the choice of the constant $C_{N,p,s}$ ensures the consistency of the fractional seminorm with its limiting local and zeroth-order counterparts. More precisely,
$$
\lim_{s\searrow 0}[u]_{s,p}
=
\|u\|_{L^p(\mathbb{R}^N)}
\quad \text{and} \quad
\lim_{s\nearrow 1}[u]_{s,p}
=
\|\nabla u\|_{L^p(\mathbb{R}^N)}.
$$

Since $\mu$ is a nonnegative, nontrivial finite Borel measure on $(0,1)$, with $0<s<1$ and $1<p<\infty$, we introduce the following function space in the case $\alpha=0$:
$$
W^{s,p}_\mu(\mathbb{R}^N)
:=
\left\{
u:\mathbb{R}^N\to\mathbb{R}
\text{ measurable }:
\|u\|_\mu<+\infty
\right\},
$$
equipped with the norm
\begin{align}\label{eq2.2}
    \|u\|_\mu
    :=
    \bigg(
    \|u\|_{L^p(\mathbb{R}^N)}^p
    +
    \int_{(0,1)}
    [u]_{s,p}^p
    \,\d\mu(s)
    \bigg)^{\frac1p}.
\end{align}

We next introduce a new notion of tail, which differs from tails defined in \eqref{tail1} and \eqref{tail2}.
\begin{definition}
For a function $u\in L_{loc}^{p-1}(\mathbb{R}^N)$ and an open ball $B_r(x_0)\subset \mathbb{R}^N$, we define the nonlocal tail of $u$ with respect to the ball $B_r(x_0)$ by
\begin{equation}\label{Tail}
    \operatorname{Tail}(u;x_0,r)
    :=
    \left[
    \int_{(0,1)}
    C_{N,s,p}r^{p}
    \left(
    \int_{\mathbb{R}^N\setminus B_r(x_0)}
    \frac{|u(y)|^{p-1}}{|y-x_0|^{N+sp}}
    \,dy
    \right)
    d\mu(s)
    \right]^{\frac{1}{p-1}}.
\end{equation}
\end{definition}

We next introduce the corresponding tail space, $L^{p-1}_{sp,\mu}(\mathbb{R}^N)\supset W^{s,p}_{\mu}(\mathbb{R}^N)$, defined as
\begin{align*}
    L^{p-1}_{sp,\mu}(\mathbb{R}^N)
    :=
    \left\{
    u\in L_{loc}^{p-1}(\mathbb{R}^N):
    \operatorname{Tail}(u;x,r)<\infty,
    \ \forall\, x\in\mathbb{R}^N,\ \forall\, r\in(0,\infty)
    \right\}.
\end{align*}
We now introduce the notion of weak solutions associated with problem \eqref{MP}.
\begin{definition}\label{def2.11}
A function $u\in W_{loc}^{1,p}(\Omega)\cap L^{p-1}_{sp,\mu}(\mathbb{R}^N)$ is called a weak subsolution of \eqref{MP} if, for every $\Omega'\Subset \Omega$, the following inequality holds:
\begin{align}\label{eq2.10} 
&\int_{(0,1)}C_{N,s,p}
\left(
\iint_{\mathbb{R}^{2N}}
\frac{\mathcal{A}(u(x,y))(v(x)-v(y))}
{|x-y|^{N+sp}}
\,\d x\,\d y
\right)
\d\mu(s)
\nonumber \\ 
&\quad+\alpha\int_{\Omega}
|\nabla u(x)|^{p-2}\nabla u(x)\cdot\nabla v(x)
\,\d x
\leq 0, ~\text{for all nonnegative }v\in W_0^{1,p}(\Omega').
\end{align}

Similarly, a function $u\in W_{loc}^{1,p}(\Omega)\cap L^{p-1}_{sp,\mu}(\mathbb{R}^N)$ is called a weak supersolution of \eqref{MP} if the left-hand side of \eqref{eq2.10} is nonnegative for every $\Omega'\Subset\Omega$ and every nonnegative function $v\in W_0^{1,p}(\Omega')$. Finally, a function $u\in W_{loc}^{1,p}(\Omega)\cap L^{p-1}_{sp,\mu}(\mathbb{R}^N)$ is called a weak solution of \eqref{MP} if the left-hand side of \eqref{eq2.10} vanishes for every $v\in W_0^{1,p}(\Omega')$.
\end{definition}
Observe that Definition \ref{def2.11} immediately yields the following elementary properties:
\begin{itemize}
    \item[(i)] $u$ is a weak subsolution of \eqref{MP} if and only if $-u$ is a weak supersolution of \eqref{MP}$.$

    \item[(ii)] For any $c\in\mathbb{R}$, the function $u+c$ is a weak solution of \eqref{MP} if and only if $u$ is a weak solution of \eqref{MP}$.$

    \item[(iii)] $u$ is a weak solution of \eqref{MP} if and only if $-u$ is a weak solution of \eqref{MP}$.$
\end{itemize}
In the following lemma, we collect several results from \cite{BGKL2026} that will be used repeatedly throughout the paper.

\begin{lem}\label{nw}
The following statements hold:
\begin{enumerate}
    \item[(a)] The space $W^{s,p}_\mu(\mathbb{R}^N)$ is a Banach space when endowed with the norm \eqref{eq2.2}.

    \item[(b)] Let $\Omega$ be a bounded domain in $\mathbb{R}^N$, $s\in(0,1)$, and $p\in(1,\infty)$. Then there exists a constant $C=C(N,\Omega,p)>0$ such that
    \begin{align*}
        \int_{(0,1)}[u]_{s,p}^p\,\d\mu(s)
        \leq
        C\int_\Omega |\nabla u(x)|^p\,\d x,
        \qquad \forall\,u\in W_0^{1,p}(\Omega).
    \end{align*}

    \item[(c)] A function $u$ is a weak solution of \eqref{MP} if and only if it is both a weak subsolution and a weak supersolution of \eqref{MP}.

    \item[(d)] If $u$ is a weak subsolution of \eqref{MP}$,$ then $u_+$ is a weak subsolution of \eqref{MP}$.$

    \item[(e)] If $u$ is a weak supersolution of \eqref{MP}$,$ then $u_-$ is a weak subsolution of \eqref{MP}$.$

    \item[(f)] Let $u$ be a weak subsolution of \eqref{MP} and let $\phi=(u-k)_+$ for some $k\in\mathbb{R}$. Then there exists a positive constant $C:=C(p)$ such that
    \begin{align}\label{eq3.1}
     &\alpha \int_{B_r(x_0)} w^p|\nabla \phi|^p \d x+ \int_{(0,1)} C_{N,s,p} \left( \iint_{B_r(x_0)\times B_r(x_0)} \frac{|\phi(x) w(x)-\phi(y)w(y)|^p}{|x-y|^{N+sp}}\d x \d y\right) \d \mu \nonumber\\
    \leq &  C \Bigg[ \int_{(0,1)} C_{N,s,p} \left( \iint_{B_r(x_0)\times B_r(x_0)} \frac{ \max\{\phi(x), \phi(y)\}^p|w(x)-w(y)|^p}{|x-y|^{N+sp}}\d x\d y\right)\d \mu \nonumber \\
  & \quad\quad+ \int_{(0,1)} C_{N,s,p} \left(\underset{x\in \supp \{w\}}{\ess} \int_{\mathbb{R}^N\setminus B_r(x_0)} \frac{\phi^{p-1}(y)}{|x-y|^{N+sp}}\d y \cdot \int_{B_r(x_0)}\phi(x) w^p(x) \d x \right)\d \mu \nonumber\\
  &\quad\quad\quad\quad\quad+\alpha \int_{B_r(x_0)} \phi^p|\nabla w|^p \d x \Bigg],
    \end{align}
    where $w$ is a nonnegative function satisfying $w\in C_c^\infty(B_r(x_0))$ and $B_r(x_0)\subset\Omega$.

    \item[(g)] Let $u$ be a weak supersolution of \eqref{MP} and let $\phi=(u-k)_-$ for some $k\in\mathbb{R}$. Then estimate \eqref{eq3.1} remains valid.

    \item[(h)] Let $u$ be a weak solution of \eqref{MP} and let $\phi=(u-k)_\pm$ for some $k\in\mathbb{R}$. Then estimate \eqref{eq3.1} holds.
\end{enumerate}
\end{lem}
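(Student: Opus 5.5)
The plan is to take the items in logical order, deriving the structural ones (c), (e), (g), (h) from (d) and (f) by symmetry. Each fractional estimate will be proved for a fixed $s\in(0,1)$ with constants that do not depend on $s$, and then integrated against $\d\mu(s)$. Measurability in $s$ of the integrands and the use of Fubini--Tonelli are justified because all integrands are nonnegative or are dominated by quantities that are finite when $u\in W^{1,p}_{loc}(\Omega)\cap L^{p-1}_{sp,\mu}(\mathbb{R}^N)$.

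For (a), take a Cauchy sequence $(u_k)$ in $\|\cdot\|_\mu$. It is Cauchy in $L^p(\mathbb{R}^N)$, so it converges to some $u$, and a subsequence converges a.e. Applying Fatou's lemma first in $(x,y)$ and then in $s$ with respect to $\mu$ gives
$\int_{(0,1)}[u_k-u]_{s,p}^p\,\d\mu\le\liminf_j\int_{(0,1)}[u_k-u_j]_{s,p}^p\,\d\mu$. This is small for large $k$, so $u\in W^{s,p}_\mu$ and $u_k\to u$ in the norm.

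For (b), I will use the uniform interpolation bound $C_{N,p,s}\iint\frac{|u(x)-u(y)|^p}{|x-y|^{N+sp}}\le C(N,p)\big(\|u\|_{L^p}^p+\|\nabla u\|_{L^p}^p\big)$. To prove it, split the integral into $|x-y|<1$, estimated by $\int_0^1 |\nabla u(\cdot+th)|\,\d t$, and $|x-y|\ge1$, estimated by $2^p\|u\|_p^p$. This gives factors $\frac{1}{(1-s)p}$ and $\frac{1}{sp}$, which the normalization $C_{N,p,s}\sim s(1-s)$ exactly cancels. I then combine it with Poincaré's inequality on $\Omega$ and the finiteness of $\mu$.

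Item (c) is immediate: a solution satisfies both inequalities trivially. Conversely, splitting $v=v_+-v_-$ with $v_\pm\in W^{1,p}_0(\Omega')$ nonnegative shows that the left side of \eqref{eq2.10} is both $\le0$ and $\ge0$.

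For (d), test \eqref{eq2.10} with $v\,H_\varepsilon(u)$, where $H_\varepsilon(t)=\min\{t_+/\varepsilon,1\}$ and $v\ge0$. In the local term, $\nabla H_\varepsilon(u)\cdot|\nabla u|^{p-2}\nabla u\ge0$, so this term can be dropped; letting $\varepsilon\to0$ then leaves the local term for $u_+$. In the nonlocal term I will use the pointwise inequality $\mathcal{A}(u(x,y))\big(v(x)H_\varepsilon(u(x))-v(y)H_\varepsilon(u(y))\big)\ge$ a quantity converging to $\mathcal{A}(u_+(x,y))(v(x)-v(y))$ minus an error supported where $u\le0$. That error has the good sign because $|u_+(x)-u_+(y)|\le|u(x)-u(y)|$ with matching sign structure, as in Korvenpää--Kuusi--Palatucci. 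Dominated convergence, first in $(x,y)$ and then in $s$, is justified by the tail bound. Item (e) then follows from (i) and (d), since $u_-=(-u)_+$.

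For (f), which is the main obstacle, I test with $v=\phi w^p$, where $\phi=(u-k)_+$, and follow the Di Castro--Kuusi--Palatucci computation for each fixed $s$. On $B_r\times B_r$, the elementary inequality (the case $\phi(x)\ge\phi(y)$ and its symmetric counterpart) gives $\mathcal{A}(u(x,y))(\phi(x)w^p(x)-\phi(y)w^p(y))\ge c|\phi w(x)-\phi w(y)|^p-C\max\{\phi(x),\phi(y)\}^p|w(x)-w(y)|^p$, with $c,C$ depending only on $p$. Off the diagonal block, one point lies outside $B_r$, where $w=0$; there I use $\mathcal{A}(u(x,y))\phi(x)\ge-(\phi(y))^{p-1}\phi(x)$, bound $|x-y|^{-N-sp}$ by its supremum over $x\in\supp w$, and obtain the tail term. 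The local part is the standard Caccioppoli estimate via Young's inequality, with $\nabla(\phi w^p)=w^p\nabla\phi+p\phi w^{p-1}\nabla w$. Since all constants depend only on $p$, I can multiply by $C_{N,s,p}$ and integrate in $\mu$. The delicate point is ensuring that each $s$-slice is finite so the absorption is legitimate; for this I will use $u\in L^{p-1}_{sp,\mu}$ together with (b) applied to $\phi w\in W^{1,p}_0(B_r)$. Finally, (g) follows by applying (f) to $-u$ with level $-k$, since $(u-k)_-=(-u-(-k))_+$, and (h) combines (f) and (g) using (c).
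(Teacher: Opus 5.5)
The paper gives no proof of this lemma. It introduces the lemma as a collection of results from \cite{BGKL2026} and uses every item as a black box. Your proposal is therefore a self-contained reconstruction rather than a variant of an argument in the text, and its architecture is the natural one:
an $s$-uniform interpolation bound for (b), which uses $C_{N,p,s}\le C(N,p)\,s(1-s)$ to cancel the factors $\frac{1}{sp}$ and $\frac{1}{(1-s)p}$;
pointwise integrand inequalities with constants depending only on $p$, integrated against $\mu$, for (d) and (f);
and the symmetry $u\mapsto -u$, $k\mapsto -k$ for (c), (e), (g), (h).
What this buys over the citation is an explicit record of where uniformity in $s$ is used, which is exactly the issue when $\mu$ charges neighbourhoods of $s=0$ or $s=1$.

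Two small remarks. First, your proof of (b) gives $C(N,p,\Omega)\,\mu((0,1))$. Some dependence on $\mu$ is unavoidable (replace $\mu$ by $\lambda\mu$), so it is the dependence $C(N,\Omega,p)$ in the statement that is inaccurate, not your argument. Second, in (f) what you do for fixed $s$ can only be the pointwise inequalities. $u$ is not a subsolution for any single $(-\Delta)_p^s$; only the sum of the local term and the $\mu$-integrated nonlocal term has a sign.

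The one step whose stated justification fails is the limit $\varepsilon\to0$ in (d). Dominated convergence ``justified by the tail bound'' does not work near the diagonal, where the tail plays no role. Split
\[
v(x)H_\varepsilon(u(x))-v(y)H_\varepsilon(u(y))=H_\varepsilon(u(x))\bigl(v(x)-v(y)\bigr)+v(y)\bigl(H_\varepsilon(u(x))-H_\varepsilon(u(y))\bigr).
\]
After multiplication by $\mathcal{A}(u(x,y))$, the first piece is dominated by $|u(x)-u(y)|^{p-1}|v(x)-v(y)|$. This is integrable against $C_{N,s,p}\,\mathrm{d}\nu\,\mathrm{d}\mu$ by H\"older, (b) and the tail. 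The second piece, however, is only bounded by $v(y)|u(x)-u(y)|^{p-1}$, and $|u(x)-u(y)|^{p-1}|x-y|^{-N-sp}$ is not locally integrable once $sp\ge p-1$ (already for linear $u$).

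The remedy is to note that the second piece times $\mathcal{A}(u(x,y))$ is nonnegative, because $H_\varepsilon$ is nondecreasing and $v\ge0$. Apply dominated convergence to the first piece and Fatou's lemma, jointly in $(x,y,s)$, to the second. Fatou points in the right direction, since you need a lower bound for the nonlocal term. The pointwise limit $\mathcal{A}(u(x,y))\bigl(v(x)\chi_{\{u(x)>0\}}-v(y)\chi_{\{u(y)>0\}}\bigr)$ dominates $\mathcal{A}(u_+(x,y))(v(x)-v(y))$ by a four-case check; in the crossing case $u(x)>0\ge u(y)$, use $u(x)-u(y)\ge u(x)$ and $v(y)\ge0$.

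Finally, $vH_\varepsilon(u)\in W^{1,p}_0(\Omega')$ requires $v$ to be bounded, because of the term $\varepsilon^{-1}v\chi_{\{0<u<\varepsilon\}}\nabla u$. Run the argument for nonnegative $v\in C_c^\infty(\Omega')$ and pass to general $v$ by density, using that the form is continuous on $W^{1,p}_0(\Omega')$. With this repair the proposal is complete.
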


We conclude this section by recalling several auxiliary results that will play a crucial role in the proofs of the H\"older continuity and Harnack inequality.

\begin{lem}[Corollary $1.57$, \cite{MZ97}]
Let $\Omega$ be an open bounded subset of $\mathbb{R}^N$, $p\in(1,\infty)$, and define
\begin{align}\label{grad sob}
    \eta=
    \begin{cases}
        \dfrac{N}{N-p}, & \text{when } p\in(1,N),\\
        2, & \text{when } p\in[N,\infty).
    \end{cases}
\end{align}
Then there exists a constant $C=C(p,N)>0$ such that
\begin{align}\label{sob}
    \left(
    \int_\Omega |u(x)|^{\eta p}\,\d x
    \right)^{\frac{1}{\eta p}}
    \leq
    C|\Omega|^{\frac{1}{N}-\frac{1}{p}+\frac{1}{\eta p}}
    \left(
    \int_\Omega |\nabla u(x)|^p\,\d x
    \right)^{\frac{1}{p}},~~\text{for all }u\in W_0^{1,p}(\Omega).
\end{align}
\end{lem}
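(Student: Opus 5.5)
The plan is to reduce both cases of \eqref{sob} to the classical Gagliardo--Nirenberg--Sobolev inequality on $\mathbb{R}^N$,
\begin{equation*}
\|v\|_{L^{q^*}(\mathbb{R}^N)}\leq C(N,q)\|\nabla v\|_{L^q(\mathbb{R}^N)},\qquad q^*=\frac{Nq}{N-q},\quad 1\leq q<N,
\end{equation*}
valid for $v\in W^{1,q}(\mathbb{R}^N)$ with compact support. It applies to $u\in W_0^{1,p}(\Omega)$ after extending $u$ by zero, which is the convention built into the definition of $W_0^{1,p}(\Omega)$ above. The only remaining work is to track exponents and the power of $|\Omega|$.

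\textbf{Case $p\in(1,N)$.} Here $\eta p=\frac{Np}{N-p}=p^*$. Applying the inequality with $q=p$ gives $\|u\|_{L^{\eta p}(\Omega)}\leq C(N,p)\|\nabla u\|_{L^p(\Omega)}$. The exponent of $|\Omega|$ in \eqref{sob} is
\begin{equation*}
\frac1N-\frac1p+\frac{N-p}{Np}=0,
\end{equation*}
so the factor $|\Omega|^{0}=1$ and \eqref{sob} holds exactly as stated.

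\textbf{Case $p\in[N,\infty)$.} Now $\eta=2$, and I choose an auxiliary exponent $q$ with $q^*=2p$, namely
\begin{equation*}
q:=\frac{2pN}{N+2p}.
\end{equation*}
This $q$ has the following properties:
\begin{itemize}
\item $q<N$, since this is equivalent to $2p<N+2p$;
\item $q\geq 1$, since $2pN\geq N+2p$ whenever $p\geq N\geq 1$ (for $N=1$ this uses $p>1$);
\item $q\leq p$, since this is equivalent to $N\leq 2p$, which holds because $p\geq N$.
\end{itemize}
Since $\Omega$ is bounded, $u\in W_0^{1,p}(\Omega)\subset W_0^{1,q}(\Omega)$, and the GNS inequality gives $\|u\|_{L^{2p}}\leq C(N,q)\|\nabla u\|_{L^q(\Omega)}$. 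Hölder's inequality on $\Omega$ then yields
\begin{equation*}
\|\nabla u\|_{L^q(\Omega)}\leq|\Omega|^{\frac1q-\frac1p}\|\nabla u\|_{L^p(\Omega)}.
\end{equation*}
The exponent here is
\begin{equation*}
\frac1q-\frac1p=\frac{N+2p}{2pN}-\frac1p=\frac1N-\frac1p+\frac1{2p},
\end{equation*}
which is precisely $\frac1N-\frac1p+\frac1{\eta p}$ with $\eta=2$. Since $q$ depends only on $N$ and $p$, so does the constant.

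There is no real obstacle in this argument. The only point requiring care is the borderline and supercritical regime $p\geq N$, where one cannot use $p$ itself in the Sobolev inequality. The fix is to lower the integrability exponent to $q<N$ and pay for it through Hölder's inequality with a power of $|\Omega|$. The specific choice $q^*=2p$ is what makes that power match the one stated in \eqref{sob}.
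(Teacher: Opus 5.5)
Your argument is correct for $N\ge 2$, but the case $p\in[N,\infty)$ fails for $N=1$. The statement allows $N=1$, and there this case covers every $p>1$. Your justification of $q\ge 1$ via $2pN\ge N+2p$ becomes $2p\ge 1+2p$, which is false. In fact $q=\frac{2p}{1+2p}\in(\frac23,1)$. A better choice of $q$ does not help either: for $N=1$ the admissible range $1\le q<N$ of the Gagliardo--Nirenberg--Sobolev inequality is empty. So the reduction to a subcritical exponent followed by H\"older cannot work in one dimension.

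For $N=1$, use the endpoint bound directly. The zero extension of $u$ has an absolutely continuous representative with compact support, and $u'=0$ a.e.\ outside $\Omega$. Writing $u(x)=\int_{-\infty}^{x}u'\,dt=-\int_{x}^{\infty}u'\,dt$ gives
$\|u\|_{L^\infty}\le\frac12\int_\Omega|u'|\,dt\le\frac12|\Omega|^{1-\frac1p}\|u'\|_{L^p(\Omega)}$. Hence
\[
\|u\|_{L^{2p}(\Omega)}\le|\Omega|^{\frac{1}{2p}}\|u\|_{L^\infty(\Omega)}\le\tfrac12\,|\Omega|^{1-\frac1p+\frac1{2p}}\|u'\|_{L^p(\Omega)},
\]
and $1-\frac1p+\frac1{2p}$ is exactly $\frac1N-\frac1p+\frac1{\eta p}$ for $N=1$, $\eta=2$. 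With this addition your proof is complete; the paper gives no argument of its own and only cites Mal\'y--Ziemer. A route that avoids the split in $N$ is the pointwise bound $|u(x)|\le C(N)\int_\Omega|\nabla u(y)|\,|x-y|^{1-N}\,dy$, combined with the standard $L^p\to L^{2p}$ estimate for Riesz potentials on sets of finite measure. That estimate produces the factor $|\Omega|^{\frac1N-\frac1{2p}}$ for every $N\ge 1$, provided $N<2p$, which holds whenever $p\ge N$.
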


\begin{lem}[Lemma $4.1$, \cite{D2012}]\label{MI Lemma}
Let $\{P_j\}_{j=0}^{\infty}$ be a sequence of positive real numbers satisfying $P_{j+1}\leq C_1C_2^jP_j^{1+\beta},$
where $C_1,C_2>1$ and $\beta>0$ are constants. If $
P_0\leq C_1^{-\frac{1}{\beta}}C_2^{-\frac{1}{\beta^2}},$
then we have $\lim_{j\to\infty}P_j=0.$
\end{lem}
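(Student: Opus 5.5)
The plan is to prove a quantitative geometric decay of $P_j$ by induction, which immediately forces $P_j\to0$. Set
\[
Y_0:=C_1^{-\frac1\beta}C_2^{-\frac1{\beta^2}}, \qquad \lambda:=C_2^{-\frac1\beta}\in(0,1),
\]
where $\lambda<1$ because $C_2>1$ and $\beta>0$. The claim to establish is
\[
P_j\le Y_0\,\lambda^{j}\quad\text{for all } j\ge 0 .
\]
The case $j=0$ is exactly the hypothesis $P_0\le C_1^{-1/\beta}C_2^{-1/\beta^2}$.

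For the inductive step, assume $P_j\le Y_0\lambda^j$. Since $t\mapsto t^{1+\beta}$ is increasing on $(0,\infty)$, the recursion gives
\[
P_{j+1}\le C_1C_2^{j}\bigl(Y_0\lambda^j\bigr)^{1+\beta}.
\]
Next I expand the powers. For the $C_1$ factor,
\[
C_1\cdot C_1^{-\frac{1+\beta}{\beta}}=C_1^{-\frac1\beta}.
\]
For the $j$-independent powers of $C_2$,
\[
C_2^{-\frac{1+\beta}{\beta^2}}=C_2^{-\frac1{\beta^2}}\,C_2^{-\frac1\beta}.
\]
For the $j$-dependent part,
\[
C_2^{j}\,C_2^{-\frac{j(1+\beta)}{\beta}}=C_2^{-\frac{j}{\beta}}.
\]
Collecting these three factors,
\[
P_{j+1}\le C_1^{-\frac1\beta}C_2^{-\frac1{\beta^2}}\,C_2^{-\frac{j+1}{\beta}}=Y_0\lambda^{j+1},
\]
which closes the induction.

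Finally, since $0<\lambda<1$, we get $0<P_j\le Y_0\lambda^j\to0$, so $\lim_{j\to\infty}P_j=0$. The only point needing care is choosing the ansatz $Y_0\lambda^j$ with exactly these exponents. The rate $\lambda=C_2^{-1/\beta}$ is what makes the factor $C_2^{j}$ cancel against the gain from the power $1+\beta$. The constant $Y_0$ is then the one that absorbs $C_1$ and the leftover factor $C_2^{-1/\beta}$, which is precisely the smallness threshold assumed on $P_0$. Beyond that choice, the argument is just the exponent bookkeeping above.
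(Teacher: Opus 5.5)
Your proof is correct: the induction $P_j\le C_1^{-1/\beta}C_2^{-1/\beta^2}C_2^{-j/\beta}$ closes exactly as you compute. The paper gives no proof and only cites \cite{D2012}, where the standard argument is this same induction for $P_j\le C_2^{-j/\beta}P_0$; the sole cosmetic difference is that it keeps $P_0$ in place of the threshold in the bound, which is marginally sharper but leaves the bookkeeping unchanged.
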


\begin{lem}[Lemma $2.5$, \cite{DKP2014}]\label{lmn6.30}
Let $A\subset B_r(x_0)$ be a measurable set with $r\in(0,1]$. For $\delta'\in(0,1)$, define
\begin{align*}
    A_{\delta'}
    :=
    \bigcup_{\gamma>0}
    \bigg\{
    B_{3\gamma}(x)\cap B_r(x_0):
    x\in B_r(x_0)
    \, \text{and}\,\,
    |A\cap B_{3\gamma}(x)|
    >
    \delta'|B_{\gamma}(x)|
    \bigg\}.
\end{align*}
Then either $|A_{\delta'}|
\geq
\frac{C}{\delta'}|A|,$ or $A_{\delta'}=B_r(x_0),$ where $C=C(N)\in(0,1]$.
\end{lem}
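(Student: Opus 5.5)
We argue by contradiction with the alternative $A_{\delta'}=B_r(x_0)$, combining Lebesgue density, a stopping radius at each point of $A$, and the Vitali covering lemma. Assume $A_{\delta'}\neq B_r(x_0)$; we aim to show $|A|\le C(N)\,\delta'\,|A_{\delta'}|$, which is the first alternative with $C=C(N)^{-1}$ (shrunk to lie in $(0,1]$ if necessary).

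\emph{Step 1: stopping radii.} By the Lebesgue density theorem, for a.e.\ $x\in A$ we have $|A\cap B_{3\gamma}(x)|/|B_{3\gamma}(x)|\to 1$ as $\gamma\to0$. Since $|B_{3\gamma}(x)|=3^N|B_\gamma(x)|$ and $\delta'<1$, the condition $|A\cap B_{3\gamma}(x)|>\delta'|B_\gamma(x)|$ holds for all small $\gamma$. For such $x$ (note $x\in A\subset B_r(x_0)$) we set
\[
\gamma_x:=\sup\{\gamma>0:\ |A\cap B_{3\gamma}(x)|>\delta'|B_\gamma(x)|\}.
\]
The supremum is finite. Indeed, if the condition held for some $\gamma$ with $3\gamma\ge 2r$, then $B_{3\gamma}(x)\supset B_r(x_0)$, and this $\gamma$ would give $A_{\delta'}=B_r(x_0)$, which we excluded. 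Hence $\gamma_x\le 2r/3$. By the definition of the supremum, the condition fails for every $\gamma>\gamma_x$. Moreover, $B_{3\gamma}(x)\cap B_r(x_0)\subset A_{\delta'}$ for every admissible $\gamma<\gamma_x$, and these balls exhaust $B_{3\gamma_x}(x)$. Therefore $B_{3\gamma_x}(x)\cap B_r(x_0)\subset A_{\delta'}$.

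\emph{Step 2: Vitali covering.} The balls $\{B_{3\gamma_x}(x)\}$ cover a.e.\ point of $A$ and have uniformly bounded radii. Vitali's lemma gives a countable disjoint subfamily $B_{3\gamma_i}(x_i)$ with $A\subset\bigcup_i B_{15\gamma_i}(x_i)$ up to a null set. Since $5\gamma_i>\gamma_{x_i}$, the condition fails at $\gamma=5\gamma_i$, that is,
\[
|A\cap B_{15\gamma_i}(x_i)|\le \delta'|B_{5\gamma_i}(x_i)|=5^N\delta'|B_{\gamma_i}(x_i)|.
\]
Summing over $i$ yields $|A|\le 5^N\delta'\sum_i|B_{\gamma_i}|$.

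\emph{Step 3: lower bound for $|A_{\delta'}|$.} Each center satisfies $x_i\in B_r(x_0)$ and each radius satisfies $3\gamma_i\le 2r$. An elementary geometric fact then gives $|B_{3\gamma_i}(x_i)\cap B_r(x_0)|\ge c(N)|B_{3\gamma_i}(x_i)|$: a ball of radius $\rho\le 2r$ centered inside $B_r(x_0)$ contains a ball of radius comparable to $\rho$ lying inside $B_r(x_0)$. The sets $B_{3\gamma_i}(x_i)\cap B_r(x_0)$ are disjoint and, by Step 1, contained in $A_{\delta'}$. Hence
\[
|A_{\delta'}|\ge c(N)\,3^N\sum_i|B_{\gamma_i}|\ge \frac{c(N)\,3^N}{5^N\delta'}\,|A|,
\]
which is the claim.

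\emph{Main obstacle.} The delicate point is Step 1. We must ensure that $\gamma_x$ is well defined and bounded, and this is exactly where the hypothesis $A_{\delta'}\ne B_r(x_0)$ enters. We must also ensure the inclusion $B_{3\gamma_x}(x)\cap B_r(x_0)\subset A_{\delta'}$; we handle it by exhausting the open ball with admissible radii $\gamma<\gamma_x$. The geometric constant $c(N)$ in Step 3 is routine.
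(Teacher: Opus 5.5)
Your proof is correct. The stopping radius $\gamma_x\le 2r/3$ is forced by $A_{\delta'}\neq B_r(x_0)$. The inclusion $B_{3\gamma_x}(x)\cap B_r(x_0)\subset A_{\delta'}$ also holds: the set of admissible radii need not be an interval, but admissible radii accumulate at its supremum. The Vitali selection with the condition failing at $5\gamma_i$ works, and so does the relative-measure bound in Step 3; there you can take $c(N)=2^{-N}$, which gives $C=(3/10)^N$.

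The paper gives no proof of this lemma and simply imports it from \cite{DKP2014}. Your density/stopping-radius/Vitali argument is the standard proof of this Krylov--Safonov type covering lemma.
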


\section{Logarithmic estimate and Local  Boundedness} \label{sec3}
In this section, we establish a significant logarithmic estimate and the local boundedness involving a new tail \eqref{Tail}, which will be utilized in the following section.

\subsection{Proof of Lemma \ref{lmn3.4}}
\begin{proof} 
The proof follows the same lines as that of \cite[Lemma $5.1$]{AGKR2025i}, except for a crucial estimate involving the tail term. Therefore, we only discuss this part here and refer the reader to \cite[Lemma $5.1$]{AGKR2025i} for the remaining details of the proof.

Let $d>0$ and $\eta \in C_c^{\infty}({\mathbb{R}^N})$ be such that \begin{equation*}
0\leq\eta\leq1,~~~\eta\equiv1~\text{in}~B_r:=B_r(x_0),~~~\eta\equiv0~\text{in}~{\mathbb{R}^N}\setminus B_{\frac{3r}{2}}(x_0)~~\text{and}~|\nabla \eta|<Cr^{-1}.
\end{equation*}
Let us choose  $v=(u+d)^{1-p}\eta^p$ is a well-defined test function of \eqref{MP}.

The proof then proceeds {\it verbatim} as in \cite[Lemma $5.1$]{AGKR2025i}, except for the final part of the estimate of $\mathbf{I_4}$. Hence, we only provide this modified estimate below. Incorporating this estimate into the argument of \cite[Lemma $5.1$]{AGKR2025i}, we obtain the desired logarithmic estimate.
\noindent For ${x\in B_{\frac{3r}{2}}:=B_{\frac{3}{2}r}}(x_0)$ and $y\in\mathbb{R}^N\setminus B_{2r}(x_0) $, we get
\begin{align}\label{eq3.3}
   \frac{1}{ |y-x|}\leq \frac{2}{r}, \quad \text{ and } \quad  \frac{|y-x_0|}{|y-x|} \leq 1 +\frac{|x-x_0|}{|y-x|} \leq 1+\frac{\frac{3}{2}r}{\frac{1}{2}r}=4.
\end{align}

Moreover, observe that $u(y)\geq 0$ for all  $y \in B_{R}$. Thus, using $(u(x)-u(y))_{+}\leq u(x)$, we get
\begin{equation}\label{4.40}
	\frac{(u(x)-u(y))_{+}^{p-1}}{(d+u(x))^{p-1}} \leq 1,~\forall\, x \in B_{2 r}, \,y \in B_{R}.
\end{equation}
On the other hand, for all $x\in B_{2r}$ and $y \in \mathbb{R}^N\setminus B_{R}$, we have
\begin{equation}\label{4.41}
	(u(x)-u(y))_{+}^{p-1} \leq 2^{p-1}\left[u^{p-1}(x)+(u(y))_{-}^{p-1}\right].
\end{equation}
\textbf{New estimate for a part of estimate of $I_4$ in  \cite[Lemma $5.1$]{AGKR2025i}:} Using \eqref{eq3.3}, \eqref{4.40} and \eqref{4.41}, we derive
   \begin{align*}
		I_{4}  \leq & 2 \int_{(0,1)} C_{N, p, s}\bigg( \int_{B_{R} \setminus B_{2 r}} \int_{B_{2 r}} (u(x)-u(y))_{+}^{p-1}(d+u(x))^{1-p} \frac{ \eta^{p}(x)}{|x-y|^{N+ps}}\d x \d y \bigg) \d \mu(s)\nonumber \\
		 &+2\int_{(0,1)} C_{N, p, s} \bigg(\int_{{\mathbb{R}^N} \setminus B_{R}} \int_{B_{2 r}} (u(x)-u(y))_{+}^{p-1}(d+u(x))^{1-p} \frac{ \eta^{p}(x)}{|x-y|^{N+ps}}\d x\d y\bigg) \d \mu(s)\nonumber \\
		\leq & C\int_{(0,1)} C_{N, p, s} \bigg(\int_{{\mathbb{R}^N}\setminus B_{2 r}} \int_{B_{2 r}} \frac{\eta^{p}(x)}{|x-y|^{N+ps}}\d x \d y \bigg) \d \mu(s)\nonumber\\
        &+Cd^{1-p} \int_{(0,1)} C_{N, p, s} \bigg(\int_{{\mathbb{R}^N} \setminus B_{R}} \int_{B_{2 r}} \frac{(u(y))_{-}^{p-1}\eta^p(x)}{|x-y|^{N+ps}} \d x \d y \bigg) \d \mu(s)\nonumber\\
		\leq& {C}  r^{N} \int_{(0,1)} C_{N, p, s}\bigg( \sup _{x \in B_{\frac{3r}{2}}} \int_{{\mathbb{R}^N}\setminus B_{2 r}} \frac{1}{|x-y|^{N+ps}}\d y \bigg) \d\mu(s)\nonumber\\
        &+Cd^{1-p} \int_{(0,1)} C_{N, p, s} \bigg( \int_{{\mathbb{R}^N} \setminus B_{2r}}\int_{B_{\frac{3r}{2}}} {\frac{(u(y))_{-}^{p-1}}{\left|x-y\right|^{N+ps}}\d x \d y} \bigg) \d \mu(s)\nonumber\\
        \leq& C \int_{(0,1)} C_{N, p, s} \frac{2^{N+sp}}{s}{r^{N-ps}}\d \mu(s)\nonumber\\
        &+Cd^{1-p}\left|B_{\frac{3r}{2}}\right| \int_{(0,1)} C_{N, p, s} \bigg( \int_{{\mathbb{R}^N} \setminus B_{R}} {\frac{4^{N+sp}(u(y))_{-}^{p-1}}{\left|x_{0}-y\right|^{N+ps}}\d y} \bigg) \d \mu(s)\nonumber\\
		\leq& C  \sup_{s\in \Sigma} 2^{N+sp}\sup_{s \in \Sigma} r^{N-sp} + Cd^{1-p} r^{N} \sup_{s\in \Sigma} 4^{N+sp} R^{-p}\left[\operatorname{Tail}\left(u_{-} ; x_{0}, R\right)\right]^{p-1}\nonumber\\
		\leq & C  \sup_{s \in \Sigma} r^{N-sp}  + Cd^{1-p} r^{N}  R^{-p}  \left[\operatorname{Tail}\left(u_{-} ; x_{0}, R\right)\right]^{p-1}.
	\end{align*}
    This completes the proof.
\end{proof}

\begin{cor}\label{coro}
    Suppose that $u$ is a weak solution of \eqref{MP} such that $u\geq 0$ in $B_R:=B_R(x_0) \subset \Omega$ and $b,d>0,~ a>1$. Let us define
    \begin{align}\nonumber
        \phi= \min\left\{\left(\log\left(\frac{b+d}{u+d}\right)\right)_+, \ \log a\right\}.
    \end{align}
    Then there exists a constant $C=C(N,p, \Sigma, \mu)>0$ such that 
    \begin{align*}
        \fint_{B_r(x_0)}\left|\phi-(\phi)_{B_r(x_0)}\right|^p \d x \leq C  \bigg(d^{1-p} \left(\frac{r}{R}\right)^{p}[\operatorname{Tail}(u_-;x_0,R)]^{p-1}+1\bigg),
    \end{align*}
    where $B_r:=B_r(x_0)\subset B_{\frac{R}{2}}(x_0)$ with $r\in(0,1]$, $(\phi)_{B_r(x_0)}=\frac{1}{|B_r(x_0)|}\int_{B_r(x_0)}\phi(x)\d x$ and $\Sigma:=\operatorname{supp}\{\mu\}$.
\end{cor}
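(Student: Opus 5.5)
The plan is to derive the corollary from Lemma \ref{lmn3.4} together with a Poincaré inequality for the local gradient part. The key structural point is that $\alpha>0$, so we may use only the local term on the left-hand side of the logarithmic estimate.

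\textbf{Step 1: reduce to a gradient bound.} Since $\phi\in W^{1,p}(B_r)$, the classical Poincaré inequality on balls gives
\[
\fint_{B_r}|\phi-(\phi)_{B_r}|^p\,\d x\le C(N,p)\,r^{p}\fint_{B_r}|\nabla\phi|^p\,\d x .
\]
Moreover, $\phi$ is a truncation of $\log(b+d)-\log(u+d)$, cut off at $0$ and at $\log a$. Hence $|\nabla\phi|\le|\nabla\log(u+d)|$ almost everywhere, by the chain rule for Lipschitz truncations.

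\textbf{Step 2: apply Lemma \ref{lmn3.4}.} Keeping only the local term on its left-hand side (dropping the nonnegative nonlocal term), we get
\[
\alpha\int_{B_r}|\nabla\log(u+d)|^p\,\d x\le C r^N\Big(d^{1-p}R^{-p}[\operatorname{Tail}(u_-;x_0,R)]^{p-1}+\sup_{s\in\Sigma}r^{-sp}\Big)+C\alpha r^{N-p}.
\]
Dividing by $\alpha|B_r|$ and multiplying by $r^p$ yields
\[
r^p\fint_{B_r}|\nabla\phi|^p\,\d x\le C\alpha^{-1}\Big(d^{1-p}(r/R)^p[\operatorname{Tail}(u_-;x_0,R)]^{p-1}+\sup_{s\in\Sigma}r^{p-sp}\Big)+C.
\]

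\textbf{Step 3: absorb the nonlocal contribution.} Since $r\le1$ and $s<1$, we have $r^{p-sp}\le1$, so $\sup_{s\in\Sigma}r^{p-sp}\le1$. This is exactly where the restriction $r\in(0,1]$ and the new tail normalization, with the factor $r^p$ rather than $r^{sp}$, are used: the tail term comes out as $(r/R)^p$ uniformly in $s$. The constant $C\alpha^{-1}$ depends only on $\mu$, since $\alpha=\mu(\{1\})$. Combining this with Step 1 gives the claim.

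\textbf{Main obstacle.} The only delicate point is checking that Lemma \ref{lmn3.4} applies, which requires $u$ to be a supersolution; a weak solution is one by Lemma \ref{nw}(c). One must also note that the uniformity in $s$ needs nothing like $\bar s>0$, because the exponent $p-sp$ is positive. A purely nonlocal route, using a fractional Poincaré inequality, would instead produce $s$-dependent constants degenerating as $s\to0$. Using the local term avoids this.
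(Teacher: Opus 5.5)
Your proposal is correct and follows the paper's proof essentially step for step. Both use the Poincaré inequality on $B_r$, the truncation bound $|\nabla\phi|\le|\nabla\log(u+d)|$, the local term of Lemma \ref{lmn3.4}, and $r^{p-sp}\le 1$ for $r\le 1$. The only difference is that you make the division by $\alpha$ explicit, whereas the paper silently absorbs $\alpha^{-1}$ into $C(N,p,\Sigma,\mu)$.
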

\begin{proof}
    Using the classical Poinca\'re inequality \cite[Theorem 2]{EVANS2022}, we get
    \begin{equation}\label{eq3.9}
        \fint_{B_r(x_0)}\left|\phi-(\phi)_{B_r(x_0)}\right|^p \d x \leq Cr^{p-N}\int_{B_r(x_0)} |\nabla \phi|^p \d x.
    \end{equation}
    Since $\phi$ is a truncation of the sum of $\log(u+d)$ and constant, then we obtain
    \begin{align}\label{eq3.10}
        \int_{B_r(x_0)} |\nabla \phi|^p \d x \leq  \int_{B_r(x_0)} |\nabla \log(u+d)|^p \d x.
    \end{align}
    Using \eqref{eq3.9}, \eqref{eq3.10} and the Logarithmic estimate from Lemma \ref{lmn3.4}, we derive
    \begin{align*}\nonumber
        \fint_{B_r(x_0)}\left|\phi-(\phi)_{B_r(x_0)}\right|^p \d x \leq & Cr^{p-N} \left(\alpha \int_{B_r(x_0)}|\nabla \log(u+d)|^p \d x \right) \\ \nonumber
        \leq &  C r^p\bigg(d^{1-p} {{R}^{-p}}[\operatorname{Tail}(u_-;x_0,R)]^{p-1}+\sup\limits_{s\in\Sigma} r^{-sp}\bigg)+C\alpha \nonumber \\
        \leq & C \bigg(d^{1-p} \left(\frac{r}{R}\right)^{p}[\operatorname{Tail}(u_-;x_0,R)]^{p-1}+1\bigg)+C\alpha \\ \nonumber
         \leq & C \bigg(d^{1-p} \left(\frac{r}{R}\right)^{p}[\operatorname{Tail}(u_-;x_0,R)]^{p-1}+1\bigg). 
    \end{align*}
    This completes the proof.
\end{proof}

Next, we prove local boundedness for the weak subsolution of \eqref{MP} using new tail \eqref{Tail}

\subsection{Proof of Theorem \ref{bddness}}

\begin{proof}
The proof follows the same approach as that of \cite[Lemma 4.1]{BGKL2026}, except for the estimate of \textbf{$I_2$} involving tail \eqref{tail2}. Therefore, we only present the modified estimate for $I_2$ here. Replacing this estimate into the argument of \cite[Lemma 4.1]{BGKL2026}, we obtain the desired conclusion.

    For $j\in \mathbb{N}\cup\{0\} $ and $r\in (0,1]$ with $B_r=B_r(x_0)\subset \Omega$, we define
    $$r_j=\frac{r}{2}\left(1+\frac{1}{2^{j}}\right),~\bar{r_j}=\frac{r_j+r_{j+1}}{2},~ B_j=B_{r_j}(x_0) \text{ and } \bar{B_j}=B_{\bar{r_j}}(x_0).$$
    Observer that $r_{j+1}<\bar{r_j}<r_j$ and $B_{j+1}\subset\bar{B_j}\subset B_j$. Again, for $\bar{k}>0$ and $k\in \mathbb{R}$, we define
    $$k_j=k+\left( 1-\frac{1}{2^j}\right)\bar{k},~ \bar{k_j}=\frac{k_j+k_{j+1}}{2},~ \bar{\phi_j}=(u-\bar{k_j})_+ \text{ and } {\phi_j}=(u-{k_j})_+.$$
     Note that $k_{j}<\bar{k_j}<k_{j+1}$ and $\phi_{j+1}\leq \bar{\phi_j}\leq \phi_j.$ Let $w_j\in C_c^\infty(\bar{B_j})$ such that $0\leq w_j\leq 1$ in $\bar{B_j}$,
     $w_j=1$ in $B_{j+1}$ and $|\nabla w_j|\leq \frac{2^{j+3}}{r}$. 
     Moreover, $k_{j+1}-\bar{k_j}=\bar{k_j}-k_j=\frac{\bar{k}}{2^{j+1}}$. 
    
 \noindent For $x\in \supp \{w_j\}=\bar{B_j}$ and $y\in \mathbb{R}^N\setminus B_j$, we have
\begin{align}\label{3.12}
    \frac{|y-x_0|}{|y-x|}\leq \frac{|y-x|+|x-x_0|}{|y-x|}\leq 1+\frac{\bar{r_j}}{r_j-\bar{r_j}} \leq 2^{j+4}.
\end{align}
furthermore, \begin{equation} \label{3.13}
\phi_j^p\geq(\bar{k_j}-k_j)^{p-1}\bar{\phi_j}.
\end{equation}
\textbf{New estimate of $I_2$:}  Using \eqref{3.12} and \eqref{3.13}, we obtain
\begin{align*}
     I_2=&C r^{p-N} \int_{(0,1)} C_{N,s,p} \left( \ess_{x\in \supp \{w_j\}} \int_{\mathbb{R}^N\setminus B_j} \frac{\bar{\phi_j}^{p-1}(y)}{|y-x|^{N+sp}}\d y  \int_{B_j}\bar{\phi_j} w^p \d x \right)\d \mu \nonumber\\
     \leq & C \int_{(0,1)}  C_{N,s,p}  \bigg(r^p 2^{(j+4)(N+sp)} \int_{\mathbb{R}^N\setminus B_j} \frac{{\phi_j}^{p-1}(y)}{|y-x_0|^{N+sp}}\d y \fint_{B_j} \frac{\phi^p_j(x)}{(\bar{k_j}-k_j)^{p-1}}\d x \bigg)\d \mu \nonumber \\
     \leq & C \int_{(0,1)}C_{N,s,p} \bigg\{ r^p \frac{2^{j(N+sp)+4sp+j(p-1)}}{\bar{k}^{p-1}} \int_{\mathbb{R}^N\setminus B_{\frac{r}{2}}} \frac{{\phi_j}^{p-1}(y)}{|y-x_0|^{N+sp}}\d y \left(\fint_{B_j} \phi_j^p(x) \d x\right)\bigg\}\d \mu \nonumber\\
     = &C_1 {\int_{(0,1)}  C_{N,s,p} \bigg\{ \frac{2^{j(N+sp+p-1)+4sp}}{\bar{k}^{p-1}} \left(\frac{r}{2}\right)^{p} \int_{\mathbb{R}^N\setminus B_{\frac{r}{2}}} \frac{{\phi_0}^{p-1}(y)}{|y-x_0|^{N+sp}}\d y \left(\fint_{B_j} \phi_j^p(x) \d x\right) \bigg\} \d \mu }\nonumber\\
    \leq & C {\left(\fint_{B_j} \phi_j^p(x) \d x\right)  \sup_{s\in \Sigma}\{2^{4sp} \} \cfrac{\sup_{s\in \Sigma}\{2^{j(N+sp+p-1)}\}}{\bar{k}^{p-1}} \bigg[\operatorname{Tail}\bigg(\phi_0;x_0,\frac{r}{2}\bigg)\bigg]^{p-1} }   \nonumber \\
    \leq & C 2^{j(N+p-1)+\sup_{s\in \Sigma}\{spj\}} \delta^{1-p}\left(\fint_{B_j} \phi_j^p(x) \d x\right), 
\end{align*}
where $C=C(N,p,\Sigma,\mu)>0$ and $\delta\operatornamewithlimits{Tail}(\phi_0;x_0,\frac{r}{2})\leq \bar{k}$ with  $\delta\in(0,1]$.
This completes the proof. \end{proof}

\section{Local H\"older continuity} \label{sec4}
In this section, we establish the local H\"older continuity by means of the new superposition tail \eqref{Tail}, without requiring the condition
$\bar{s}=\inf_{s\in\Sigma}s>0$ that was imposed in \cite{BGKL2026} when working with the tail quantity \eqref{tail2}. Our approach follows the method developed by Di Castro \textit{et al.} \cite[Lemma $5.1$]{DKP2016}, combined with the iteration Lemma \ref{MI Lemma}.

\begin{lem}\label{lmn5.1}
    Let $u$ be a weak solution of \eqref{MP} and $B_R(x_0)\subset \Omega$ for some $R$ with $0<r<\frac{R}{2}$, $r\in(0,1].$ Let $\kappa\in(0,\frac{1}{4}]$, we set $r_j=\kappa^j\frac{r}{2}$ and $B_j:=B_{r_j}(x_0)$ for $j\in \mathbb{N} \cup \{0\}$. We denote
    \begin{equation}\label{eq5.1}
        \frac{1}{2}\phi(r_0)=\operatorname{Tail}\bigg(u;x_0,\frac{r}{2}\bigg)+C\left(\fint_{B_r(x_0)}|u|^p \d x \right)^\frac{1}{p}, 
    \end{equation}
     and 
     \begin{equation*}
         \phi(r_j)=\left(\frac{r_j}{r_0}\right)^\sigma \phi(r_0), ~j\in \mathbb{N}, 
     \end{equation*}
     where {$\sigma \in(0,\frac{p}{p-1})$} and $C:=C(N,p,\Sigma,\mu)$ is the same constant as in \eqref{LB}. Then 
     \begin{equation}\label{eq5.3}
         \operatorname{osc}_{B_j} u=: \ess_{B_j} u -\essi_{B_j} u \leq \phi(r_j),~ j\in \mathbb{N} \cup \{0\}.
     \end{equation}
\end{lem}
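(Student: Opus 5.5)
We argue by strong induction on $j$, following the scheme of \cite[Lemma 5.1]{DKP2016}.

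\textbf{Base case $j=0$.} Apply Theorem \ref{bddness} with $\delta=1$ to $u$ and to $-u$; both are weak solutions, hence weak subsolutions. This gives
\[
\ess_{B_0}(\pm u)\le \operatorname{Tail}\Big(u;x_0,\tfrac r2\Big)+C\Big(\fint_{B_r(x_0)}|u|^p\,\d x\Big)^{1/p}=\tfrac12\phi(r_0).
\]
Hence $\operatorname{osc}_{B_0}u\le\phi(r_0)$.

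\textbf{Inductive step.} Assume \eqref{eq5.3} holds for $i=0,\dots,j$. Set $2B_{j+1}:=B_{2r_{j+1}}(x_0)$. Exactly one of the following holds:
\[
\Big|2B_{j+1}\cap\{u\ge \essi_{B_j}u+\tfrac12\phi(r_j)\}\Big|\ge\tfrac12|2B_{j+1}| \quad\text{or}\quad \Big|2B_{j+1}\cap\{u\le \essi_{B_j}u+\tfrac12\phi(r_j)\}\Big|\ge\tfrac12|2B_{j+1}|.
\]
Accordingly, define $u_j:=u-\essi_{B_j}u$ in the first case and $u_j:=\phi(r_j)-(u-\essi_{B_j}u)$ in the second. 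In either case $u_j$ is a weak solution with $0\le u_j\le\phi(r_j)$ on $B_j$, and
\[
|2B_{j+1}\cap\{u_j\ge\phi(r_j)/2\}|\ge\tfrac12|2B_{j+1}|.
\]

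\textbf{Tail estimate.} This is the main obstacle, and it is where the new tail \eqref{Tail} is used. We need
\[
\operatorname{Tail}(u_j;x_0,r_j)\le C\,\kappa^{-\sigma'}\phi(r_j)
\]
with a constant independent of $j$. Split $\mathbb R^N\setminus B_j=\bigcup_{i=1}^{j}(B_{i-1}\setminus B_i)\cup(\mathbb R^N\setminus B_0)$.

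On the annulus $B_{i-1}\setminus B_i$ the inductive hypothesis gives $|u_j|\le\phi(r_{i-1})+\phi(r_j)\le 2\phi(r_{i-1})$, so this piece contributes at most
\[
\int C_{N,s,p}\,r_j^p\,\phi(r_{i-1})^{p-1}r_i^{-sp}s^{-1}\,\d\mu.
\]
The factor $C_{N,s,p}/s$ is bounded uniformly in $s$. Since $r_j\le r_i\le1$ and $s<1$, we have $r_j^pr_i^{-sp}\le r_j^pr_i^{-p}=\kappa^{(j-i)p}$. This is the key point of the $r^p$ normalization: it yields uniform control even when $\inf\Sigma=0$. Summing over $i$ gives
\[
\phi(r_j)^{p-1}\sum_{i\le j}\kappa^{(j-i)(p-\sigma(p-1))}\kappa^{-\sigma(p-1)},
\]
a geometric series that converges because $\sigma<\frac{p}{p-1}$.

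The far piece $\mathbb R^N\setminus B_0$ is bounded by $r_j^pr_0^{-p}\big(\operatorname{Tail}(u;x_0,r_0)+\ess_{B_0}|u|\big)^{p-1}\lesssim \kappa^{jp}\phi(r_0)^{p-1}\le\phi(r_j)^{p-1}$.

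\textbf{Oscillation decay.} Apply Corollary \ref{coro} (the logarithmic estimate) to $u_j$ on $2B_{j+1}\subset B_j/2$ (this uses $\kappa\le\frac14$), with $d=\varepsilon\phi(r_j)$ and $u_{j,-}$ in the tail. By the previous step, $d^{1-p}\kappa^{p}\operatorname{Tail}^{p-1}\lesssim\varepsilon^{1-p}\kappa^{p-\sigma(p-1)}$, which is $\le1$ once $\kappa$ is chosen small depending on $\varepsilon$. The log estimate together with the half-measure condition shows that $|2B_{j+1}\cap\{u_j\le2\varepsilon\phi(r_j)\}|/|2B_{j+1}|\le C/\log(1/\varepsilon)$.

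We then run a De Giorgi iteration via Lemma \ref{lmn5.1}'s setting. Use the Caccioppoli inequality, Lemma \ref{nw}(h) with $(u_j-k)_-$ and levels $k_i\downarrow\varepsilon\phi(r_j)$, the Sobolev inequality \eqref{sob} (from the $\alpha$-gradient term), the tail bound above, and Lemma \ref{MI Lemma}. Once $\varepsilon$ is small, this gives $u_j\ge\varepsilon\phi(r_j)$ on $B_{j+1}$. Therefore
\[
\operatorname{osc}_{B_{j+1}}u\le(1-\varepsilon)\phi(r_j)=(1-\varepsilon)\kappa^{-\sigma}\phi(r_{j+1}).
\]
Finally, fix $\sigma$ small enough (still in $(0,\frac p{p-1})$) that $(1-\varepsilon)\kappa^{-\sigma}\le1$, keeping the previous constraints compatible. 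This closes the induction.
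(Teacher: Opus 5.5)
Your proposal is correct and follows essentially the same route as the paper. The shared steps are: the base case from Theorem \ref{bddness}; the same normalization $u_j$; the same splitting of $\operatorname{Tail}(u_j;x_0,r_j)$, using that $C_{N,p,s}/s$ is bounded and that $r_j^p r_i^{-sp}\le (r_j/r_i)^p$; the logarithmic estimate of Corollary \ref{coro} on $2B_{j+1}$; and a De Giorgi iteration driven by the $\alpha$-gradient term. The only difference is cosmetic: you fix $\varepsilon$ first and then take $\kappa$ so small that $\varepsilon\ge\kappa^{\frac{p}{p-1}-\sigma}$, whereas the paper sets $\tau=\kappa^{\frac{p}{p-1}-\sigma}$ directly; like the paper, your argument yields the lemma for suitably small $\kappa$ and $\sigma$, not for every $\kappa\in(0,\frac14]$ and $\sigma\in(0,\frac{p}{p-1})$.
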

\begin{proof} 
By applying $(d)$ and $(e)$ of Lemma \ref{nw}, we observe that both $u_+$ and $(-u)_+=u_-$ are weak subsolutions. From Theorem \ref{bddness}, the estimate \eqref{eq5.3} follows for $j=0$. We adopt induction principle to prove \eqref{eq5.3}. For this, let us assume that \eqref{eq5.3} holds for $i\in 0,1,2,....,j$ for some $j\geq 0$. It remains to obtain the estimate \eqref{eq5.3} for $i=j+1$. 
Note that either one of the following two estimates is true:
\begin{align}\label{eq5.4}
    \frac{|\{u\geq \essi_{B_j}u+\frac{\phi(r_j)}{2}\} \cap 2B_{j+1}|}{| 2B_{j+1}|}\geq \frac{1}{2},
\end{align}
or
\begin{align}\label{eq5.5}
    \frac{|\{u\leq \essi_{B_j}u+\frac{\phi(r_j)}{2}\} \cap 2B_{j+1}|}{| 2B_{j+1}|}\geq \frac{1}{2}.
\end{align}
 We set
\begin{align*}
    u_j:=\begin{cases}
        & u- \essi_{B_j} u,  ~~~~~~~~~~~~~\text{ when } \eqref{eq5.4} \text{ holds},\\
        &\phi(r_j)-(u- \essi_{B_j} u), \text{ when } \eqref{eq5.5} \text{ holds}.
    \end{cases}
\end{align*}
Then,  we deduce that $u_j$ is a weak solution satisfying
\begin{enumerate}
    \item[(i)] $u_j\geq 0$ in $B_j$,
    \item[(ii)] $ \frac{|\{u_j\geq \frac{\phi(r_j)}{2}\} \cap 2B_{j+1}|}{| 2B_{j+1}|}\geq \frac{1}{2},$
    \item[(iii)] {$\ess_{B_i}|u_j|\leq 2 \phi(r_i)$  $\forall ~i\in  \{0,1,2,...,j\}.$}
\end{enumerate}
Next, we establish the following estimate: 
\begin{align}\label{eq5.7}
    [\operatorname{Tail}(u_j;x_0,r_j)]^{p-1} \leq C \kappa^{-\sigma(p-1)}[\phi(r_j)]^{p-1},
\end{align}
 where $C=C(N,p,\mu,\Sigma,|\frac{p}{p-1}-\sigma|)$ a positive constant. Indeed, we obtain
\begin{align}\label{eq5.8}
   [\operatorname{Tail}(u_j;x_0,r_j)]^{p-1}=&\int_{(0,1)}{C_{N,p,s}}r_j^{p}\left(\int_{\mathbb{R}^N\setminus B_{r_j}(x_0)}\frac{|u_j(x)|^{p-1}}{|x-x_0|^{N+sp}}\d x\right)\d \mu(s) \nonumber\\
   =& \underbrace{\int_{(0,1)}C_{N,p,s} r_j^{p} \bigg( \sum_{i=1}^j \int_{B_{i-1}\setminus B_i} \frac{|u_j(x)|^{p-1}}{|x-x_0|^{N+sp}}\d x \bigg)\d \mu(s)}_{J_1} \nonumber \\
   &+ \underbrace{\int_{(0,1)}C_{N,p,s} r_j^{p} \bigg( \int_{\mathbb{R}^N\setminus B_0} \frac{|u_j(x)|^{p-1}}{|x-x_0|^{N+sp}}\d x \bigg)\d \mu(s)}_{J_2}.  
   \end{align}
   By using the condition (iii) and  \eqref{eq5.1}, we estimate
\begin{align}\label{eq5.9}
   J_1=& \int_{(0,1)}C_{N,p,s} r_j^{p} \left( \sum_{i=1}^j \int_{B_{i-1}\setminus B_i} \frac{|u_j(x)|^{p-1}}{|x-x_0|^{N+sp}}\d x \right)\d \mu(s) \nonumber\\
   \leq & \int_{(0,1)}C_{N,p,s} r_j^{p} \left( \sum_{i=1}^j\ess_{B_{i-1}}|u_j|^{p-1} {{\int_{\mathbb{R}^N\setminus B_i}} \frac{1}{|x-x_0|^{N+sp}}\d x} \right) \d \mu(s)\nonumber \\
   \leq & C { \int_{(0,1)} {C_{N,p,s}\frac{1}{s}} r_j^p\sum_{i=1}^j\left(\frac{1}{r_i}\right)^{sp}\phi(r_{i-1})^{p-1}d \mu(s)}\nonumber \\
    \leq & C { \int_{(0,1)} {C_{N,p,s}\frac{1}{s}} \sum_{i=1}^j\left(\frac{r_j}{r_i}\right)^{p}\phi(r_{i-1})^{p-1}d \mu(s)}\nonumber \\
    = & C  \int_{(0,1)} C_{N,p,s}\frac{1}{s}\phi(r_{0})^{p-1}\left( \frac{r_j}{r_0}\right)^{\sigma(p-1)}  \sum_{i=1}^j\left(\frac{r_{i-1}}{r_i}\right)^{\sigma(p-1)}\left( \frac{r_j}{r_i}\right)^{p-\sigma(p-1)}\d \mu(s)\nonumber \\
    = & C  \int_{(0,1)} C_{N,p,s}\frac{1}{s} \phi(r_{j})^{p-1}  \sum_{i=1}^j \kappa^{-\sigma(p-1)} \kappa^{(j-i)\{p-\sigma(p-1)\}}  \d  \mu(s)  \nonumber \\ 
     \leq & C \phi(r_{j})^{p-1} \kappa^{-\sigma(p-1)}  \sum_{i=1}^j  \kappa^{(j-i)\{p-\sigma(p-1)\}} \mu\{(0,1)\}, \nonumber \\
     \leq & C \phi(r_{j})^{p-1} \kappa^{-\sigma(p-1)}  \sum_{i=0}^{j-1}  \kappa^{i\{p-\sigma(p-1)\}}. 
\end{align}
Similarly, we have the following estimate for $J_2:$
\begin{align}\label{eq5.100}
    J_2=& \int_{(0,1)} C_{N,p,s}r_j^{p} \left( \int_{\mathbb{R}^N\setminus B_0} \frac{|u_j(x)|^{p-1}}{|x-x_0|^{N+sp}}\d x \right)\d \mu(s)\nonumber\\
    \leq & C\int_{(0,1)}C_{N,p,s} r_j^{p} \left( \int_{\mathbb{R}^N\setminus B_0} \frac{|u(x)|^{p-1}+\phi^{p-1}(r_0)+\sup_{B_0}|u|^{p-1}}{|x-x_0|^{N+sp}}\d x \right)\d \mu(s)\nonumber\\
    \leq & C\int_{(0,1)}C_{N,p,s} \left(\frac{r_j}{r_0}\right)^{p}\left[ {\frac{1}{s}}\{\phi^{p-1}(r_0)+\sup_{B_0}|u|^{p-1}\} + r_0^{p} \int_{\mathbb{R}^N\setminus B_0} \frac{|u(x)|^{p-1}}{|x-x_0|^{N+sp}}\d x\right] \d \mu(s)\nonumber\\
     \leq & C\int_{(0,1)}C_{N,p,s} \left(\frac{r_j}{r_1}\right)^{p}\left[{\frac{1}{s}}\phi^{p-1}(r_0) + r_0^{p} \int_{\mathbb{R}^N\setminus B_0} \frac{|u(x)|^{p-1}}{|x-x_0|^{N+sp}}\d x\right] \d \mu(s)\nonumber\\
 = & C\int_{(0,1)} C_{N,p,s}\left(\frac{r_j}{r_0}\right)^{\sigma(p-1)} \kappa^{(j-1)\{p-\sigma(p-1)\}} \kappa^{-\sigma(p-1)} \bigg[{\frac{1}{s}}\phi^{p-1}(r_0) \nonumber \\
& \quad\quad\quad\quad+ r_0^{p} \int_{\mathbb{R}^N\setminus B_0} \frac{|u(x)|^{p-1}}{|x-x_0|^{N+sp}}\d x\bigg] \d \mu(s)\nonumber\\
     \leq & C \left(\frac{r_j}{r_0}\right)^{\sigma(p-1)}  \kappa^{(j-1)\{p-\sigma(p-1)\}} \kappa^{-\sigma(p-1)} \left[\phi^{p-1}(r_0) + [\operatorname{Tail}(u;x_0,r_0)]^{p-1} \right] \mu\{(0,1)\} \nonumber \\
     \leq & C \left(\frac{r_j}{r_0}\right)^{\sigma(p-1)} \phi^{p-1}(r_0) \kappa^{(j-1)\{p-\sigma(p-1)\}} \kappa^{-\sigma(p-1)}\nonumber \\
     = & C \phi(r_j)^{p-1} \kappa^{-\sigma(p-1)} \kappa^{(j-1)\{p-\sigma(p-1)\}}. 
\end{align}

Now, utilizing estimates \eqref{eq5.9} and \eqref{eq5.100}, in \eqref{eq5.8}, we have
   \begin{align}
  [\operatorname{Tail}(u_j;x_0,r_j)]^{p-1}  \leq & C {\phi(r_{j})^{p-1} \kappa^{-\sigma(p-1)}  \sum_{i=0}^{j-1} \kappa^{i\{p-\sigma(p-1)\}} }  \nonumber \\
    \leq  & C  \phi(r_{j})^{p-1} \frac{\kappa^{-\sigma(p-1)}}{1-\kappa^{p-\sigma(p-1)}} \nonumber \\ 
     \leq  & C \frac{4^{p-\sigma(p-1)}}{\log4 \{p-\sigma(p-1)\}} \kappa^{-\sigma(p-1)} \phi(r_{j})^{p-1}= C \kappa^{-\sigma(p-1)} \phi(r_{j})^{p-1},
\end{align}
where we have used $0<\kappa\leq \frac{1}{4}$, $0<\sigma<\frac{p}{p-1}.$ 

Now, the proof of \eqref{eq5.3} is divided into two part.

\textbf{Step 1:-} In the first step, we show that 
\begin{align*}
    \frac{|\{u_j \leq 2 \tau\phi(r_j)\}\cap 2B_{r_{j+1}}(x_0)|}{|2B_{j+1}(x_0)|} \leq \frac{\bar{C}}{\log\frac{1}{\kappa}},
\end{align*}
where $\tau=\kappa^{\frac{p}{p-1}-\sigma}$ for some positive constant $\bar{C}:=\bar{C}(N,p,\Sigma,\mu,|\frac{p}{p-1}-\sigma|)$. We denote
\begin{align*}
    \epsilon:=\log\left( \frac{\frac{\phi(r_j)}{2}+\tau\phi(r_j)}{3\tau \phi(r_j)}\right)=\log\left(\frac{\frac{1}{2}+\tau}{3\tau}\right) \sim \log \left( \frac{1}{\tau} \right), \text{ as } \tau \rightarrow 0,
\end{align*}
and 
\begin{align*}
    q:=\min\left\{ \left( \log\left( \frac{\frac{\phi(r_j)}{2}+\tau\phi(r_j)}{u_j+\tau \phi(r_j)}\right) \right)_+, \epsilon  \right\}.
\end{align*}
Using the condition $(ii)$, we get
\begin{align}\label{eq5.13}
    \epsilon= &\frac{1}{|\{u_j\geq \frac{\phi(r_j)}{2}\} \cap 2B_{j+1}|}\int_{\{u_j\geq \frac{\phi(r_j)}{2}\} \cap 2B_{j+1}} \epsilon \d x \nonumber \\
     = &\frac{1}{|\{u_j\geq \frac{\phi(r_j)}{2}\} \cap 2B_{j+1}|}\int_{\{q=0\} \cap 2B_{j+1}} \epsilon \d x \nonumber \\
     \leq &\frac{2}{| 2B_{j+1}|}\int_{ 2B_{j+1}}\left(\epsilon-q\right) \d x=2\left(\epsilon-(q)_{ 2B_{j+1}}\right),
\end{align}
where $(q)_{ 2B_{j+1}}=\fint_{{ 2B_{j+1}}}q \d x$. Integrating both side of \eqref{eq5.13} over $[{\{q=\epsilon\} \cap 2B_{j+1}}]$, we obtain
\begin{align}\label{eq5.14}
    \frac{|{\{q=\epsilon\} \cap 2B_{j+1}}|\epsilon}{|2B_{j+1}|} \leq & \frac{2}{|2B_{j+1}|} \int_{[{\{q=\epsilon\} \cap 2B_{j+1}}]} \left(\epsilon-(q)_{ 2B_{j+1}}\right) \d x \nonumber \\
    \leq & \frac{2}{|2B_{j+1}|} \int_{\{ 2B_{j+1}\}} |q-(q)_{ 2B_{j+1}}| \d x. 
\end{align}
By using Corollary \ref{coro} along with $b=\frac{\phi(r_j)}{2}$, $d=\tau \phi(r_j)$, $a=e^\epsilon$, and {$B_{j+1}\subset  B_{\frac{j}{2}}$} then there exists a constant $C>0$, such that
\begin{align}\label{eq5.16}
    \fint_{\{ 2B_{j+1}\}} |q-(q)_{ 2B_{j+1}}|^p \d x \leq & C  \bigg((\tau \phi(r_j))^{1-p} \left(\frac{r_{j+1}}{r_j}\right)^{p}[\operatorname{Tail}((u_j)_-;x_0,2r_j)]^{p-1}+1\bigg), \nonumber \\
    \leq & C  \bigg((\tau \phi(r_j))^{1-p} \left(\frac{r_{j+1}}{r_j}\right)^{p}[\operatorname{Tail}(u_j;x_0,r_j)]^{p-1}+1\bigg).
\end{align}

Using \eqref{eq5.7} and \eqref{eq5.16}, we get
\begin{align}\label{eq5.17}
    \fint_{\{ 2B_{j+1}\}} |q-(q)_{ 2B_{j+1}}| \d x \leq & C \left(\tau^{1-p}\kappa^{p-\sigma(p-1)}+1\right)\leq C.
\end{align}
Applying \eqref{eq5.14} and \eqref{eq5.17}, we obtain
\begin{align*}
    \frac{|\{u_j \leq 2 \tau\phi(r_j)\}\cap 2B_{r_{j+1}}(x_0)|}{|2B_{j+1}(x_0)|}=\frac{|{\{q=\epsilon\} \cap 2B_{j+1}}|}{|2B_{j+1}|}\leq \frac{C}{\epsilon} \leq \frac{\bar{C}}{\log\frac{1}{\kappa}}.
\end{align*}
\textbf{Step 2:-}
Finally we are going to use an iterative argument to show \eqref{eq5.3} for $i=j+1$. Initially for any $i\in \mathbb{N}\cup \{0\}$, we denote $\gamma_i=\left(1+\frac{1}{2^i}\right)r_{j+1}$, $\bar{\gamma_i}=\frac{\gamma_i+\gamma_{i+1}}{2}$, $B^i=B_{\gamma_i}(x_0)$, $\bar{B^i}=B_{\bar{\gamma_i}}(x_0)$, $\rho_i=\left(1+\frac{1}{2^i}\right)\tau \phi(r_j)$ and $D^i=B^i \cap \{u_j \leq \rho_i\}$. Note that $r_{j+1}\leq \gamma_{i+1}\leq \bar{\gamma_i}\leq \gamma_i \leq 2 r_{j+1}<r_j$. 
\par Let us consider cut-off function $w_i\in C_c^\infty(\bar{B^i})$ such that $0\leq w_i\leq 1$ in $\bar{B^i}$,
     $w_i=1$ in $B^{i+1}$ and $|\nabla w_i|\leq \frac{C2^{i}}{\gamma_i}$ in $\bar{B^i}$ with $C:=C(N,p)>0$. Finally, let $\phi_i=(\rho_i-u_j)_+$. Note that $\rho_i-\rho_{i+1}=\frac{\tau \phi(r_j)}{2^{i+1}}$ and $\phi_i \leq \rho_i\leq 2\tau \phi(r_j)$ in $B^i$. 
     
     The rest of the proof follows {\it verbatim}  from \cite[Lemma $5.1$]{BGKL2026}, with an exception of the \textbf{Estimate of $Q_2$}. We discuss the following modified estimate of $Q_2,$ under the new tail \eqref{Tail}.

\noindent \textbf{New estimate of $Q_2$:} We know from the proof of \cite[Lemma $5.1$]{BGKL2026} that
\begin{align}\label{eq5.23}
    Q_2=\frac{C r^{p}_{j+1} }{|B^i|} \int_{(0,1)} C_{N,s,p} \left( \ess_{x\in \supp \{w_i\}} \int_{\mathbb{R}^N\setminus B^i} \frac{{\phi_i}^{p-1}(y)}{|x-y|^{N+sp}}\d y . \int_{B^i}{\phi_i} w_i^p \d x \right)\d \mu. 
\end{align}
Observe that for $x\in \supp\{ w_i\}=\bar{B^i}$ and $y\in \mathbb{R}^N\setminus B^i$, we get
\begin{align}\label{eq5.24}
    \frac{1}{|y-x|}=\frac{|y-x_0|}{|y-x|} \frac{1}{|y-x_0|}\leq \frac{1}{|y-x_0|} \left( 1+\frac{\bar{\gamma_i}}{\gamma_i-\bar{\gamma_i}}\right)\leq \frac{1}{|y-x_0|}  2^{i+3}
\end{align}
 and
 \begin{align}\label{eq5.25}
     \int_{B^i}{\phi_i} w_i^p \d x\leq C(\tau\phi(r_j))|D^i|,
 \end{align}
 for some constant $C:=C(N,p)>0$. 
Now  using \eqref{eq5.8}, we obtain 
 \begin{align}\label{eq5.26}
     [\operatorname{Tail}(\phi_i;x_0,r_{j+1})]^{p-1}\leq & C \bigg[\int_{(0,1)} C_{N,s,p}\left(\int_{B_j\setminus B_{j+1}} \frac{r^{p}_{j+1}\phi_i^{p-1}}{|x-x_0|^{N+sp}}\d x\right)\d \mu \nonumber \\
     &+  \left( \frac{r_{j+1}}{r_j}\right)^{p} [\operatorname{Tail}(\phi_i;x_0,r_{j})]^{p-1}\bigg] \nonumber \\
      \leq &  C\bigg[\int_{(0,1)}C_{N,s,p}{\left(\int_{B_j\setminus B_{j+1}} \frac{r^{p}_{j+1}(\tau \phi(r_j))^{p-1}}{|x-x_0|^{N+sp}}\d x\right)\d \mu  }\nonumber \\
      &+ \kappa^{p} [\operatorname{Tail}(u_j;x_0,r_{j})]^{p-1}\bigg] \nonumber \\
    \leq &  C \left((\tau \phi(r_j))^{p-1} + \kappa^{p } \kappa^{-\sigma(p-1)} \phi^{p-1}(r_{j}) \right) \nonumber  \\
     \leq &  C \left( 1+\frac{k^{p-\sigma(p-1)}}{\tau^{p-1}} \right)(\tau \phi(r_j))^{p-1} \leq C (\tau \phi(r_j))^{p-1},
 \end{align} 
 where for some constant $C:=C(N,p,\mu, \Sigma)>0$.
Using \eqref{eq5.24}, \eqref{eq5.25} and \eqref{eq5.26} in \eqref{eq5.23}, we get
 \begin{align*}
     Q_2 \leq & \frac{C }{|B^i|}(\tau\phi(r_j))|D^i| \sup_{s\in \Sigma}\{2^{i(N+sp)}\}  [\operatorname{Tail}(\phi_i;x_0,r_{j+1})]^{p-1} \nonumber \\
      \leq &C \frac{|D^i| }{|B^i|} 2^{i(N+p\sup_{s\in \Sigma}s)} (\tau\phi(r_j))^p.
 \end{align*}
This completes the proof.
\end{proof}

 \noindent \textbf{Proof of Theorem \ref{Holder t}}: The result follows from the Lemma \ref{lmn5.1}.

\section{Harnack Inequalities} \label{sec5}

The following lemma establishes the expansion-of-positivity technique, which works well with superposition operators.
\begin{lem}\label{lmn6.1}
    Let $u$ be a weak supersolution of \eqref{MP} with $u\geq 0$ in $B_R(x_0)\subset \Omega$. Suppose that $\zeta\geq 0$ and that there exists $\sigma\in (0,1]$ such that 
    \begin{align*}
        |\{u\geq \zeta\} \cap B_r(x_0)| \geq \sigma |B_r(x_0)|,
    \end{align*}
    for some $r\in( 0,1]$ with $0<16r<R$. Then there exist a constant $C:=C(N,p,\mu,\Sigma)>0$ such that
    \begin{align*}
        |B_{6r}(x_0)\cap \{u\leq 2\delta \zeta-{ \frac{1}{2}\bigg(\frac{r}{R}\bigg)^{\frac{p}{p-1}}  \operatorname{Tail}\big(u_-;x_0,R\big)-\epsilon \} }| \leq \frac{C|B_{6r}(x_0)|}{\sigma \log \frac{1}{2\delta}},
    \end{align*}
    for any $\delta \in (0,\frac{1}{4})$ and $\epsilon>0$.
\end{lem}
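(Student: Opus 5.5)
This is an expansion-of-positivity statement, and I will prove it as in \cite[Lemma 3.2]{DKP2014}: apply the logarithmic estimate Lemma \ref{lmn3.4} on the intermediate ball $B_{6r}(x_0)$ to a shifted function, then run a truncation/measure-density argument.

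\textbf{Setup.} Set
\begin{align*}
T:=\frac12\Big(\frac{r}{R}\Big)^{\frac{p}{p-1}}\operatorname{Tail}(u_-;x_0,R), \qquad d:=T+\epsilon>0 .
\end{align*}
Since $u$ is a supersolution with $u\ge 0$ in $B_R$, and $B_{6r}\subset B_{8r}\subset B_{R/2}$ because $16r<R$, Lemma \ref{lmn3.4} applies with radius $6r$. Only the local gradient term is needed; it gives
\begin{align*}
\alpha\int_{B_{6r}}|\nabla\log(u+d)|^p\,\d x\le C r^N\Big(d^{1-p}R^{-p}[\operatorname{Tail}(u_-;x_0,R)]^{p-1}+\sup_{s\in\Sigma}r^{-sp}\Big)+C\alpha r^{N-p}.
\end{align*}
The key point is the scaling of the tail term. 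By the choice of $d$,
\begin{align*}
d^{1-p}R^{-p}\operatorname{Tail}^{p-1}\le 2^{p-1}T^{1-p}R^{-p}\operatorname{Tail}^{p-1}=2^{p-1}\,2^{p-1}\Big(\frac{R}{r}\Big)^{p}R^{-p}=4^{p-1}r^{-p}.
\end{align*}
Also $r^{-sp}\le r^{-p}$ since $r\le1$. Since $\alpha>0$ is fixed and depends only on $\mu$, dividing by $\alpha$ yields
\begin{align*}
\int_{B_{6r}}|\nabla\log(u+d)|^p\,\d x\le C r^{N-p}.
\end{align*}
This is the reason the new tail \eqref{Tail}, with weight $r^p$ rather than $r^{sp}$, pairs with the factor $(r/R)^{p/(p-1)}$: it removes any dependence on $\inf\Sigma$.

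\textbf{Truncation.} Define
\begin{align*}
v:=\min\Big\{\Big(\log\frac{\zeta+d}{u+d}\Big)_+,\ \log\frac{1}{2\delta}\Big\}.
\end{align*}
This is the function of Corollary \ref{coro} with $b=\zeta$, $a=1/(2\delta)$, so that corollary (or directly Poincar\'e plus the bound above) gives
\begin{align*}
\fint_{B_{6r}}|v-(v)_{B_{6r}}|\,\d x\le C .
\end{align*}
The argument requires $\zeta>0$; if $\zeta=0$, or if the set in question is empty, the claim is trivial.

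\textbf{Measure estimate.} On $\{u\ge\zeta\}\cap B_r$ we have $v=0$, and this set has measure at least $\sigma|B_r|=\sigma 6^{-N}|B_{6r}|$. Hence for every $k\in(0,\log\frac{1}{2\delta}]$,
\begin{align*}
\frac{\sigma}{6^N}\,k\le\frac{1}{|B_{6r}|}\int_{B_{6r}\cap\{v=0\}}(k-v)_+\,\d x .
\end{align*}
Now consider the bad set $\{u\le 2\delta\zeta-d\}$. There $u+d\le2\delta\zeta\le 2\delta(\zeta+d)$, so $v=\log\frac{1}{2\delta}=:k$ on this set. Splitting $k=(k-(v)_{B_{6r}})+((v)_{B_{6r}}-0)$ and integrating over the bad set and over $\{v=0\}\cap B_r$ respectively, exactly as in \eqref{eq5.13}--\eqref{eq5.14}, gives
\begin{align*}
k\,\frac{|B_{6r}\cap\{v=k\}|}{|B_{6r}|}\le \frac{C}{\sigma}\fint_{B_{6r}}|v-(v)_{B_{6r}}|\,\d x\le\frac{C}{\sigma}.
\end{align*}
Since $u\le 2\delta\zeta-T-\epsilon$ implies $v=k$, the claim follows.

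\textbf{Main obstacle.} The delicate step is the tail scaling computation above; everything else is routine. It works only because the $\alpha$-gradient term absorbs all the fractional contributions uniformly in $s$, using $\sup_{s\in\Sigma}r^{-sp}\le r^{-p}$ for $r\le1$.
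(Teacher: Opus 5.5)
Your proof is correct and follows essentially the paper's route: a logarithmic estimate on $B_{6r}(x_0)$ for $u+d$ with $d=\frac12(r/R)^{\frac{p}{p-1}}\operatorname{Tail}(u_-;x_0,R)+\epsilon$, whose only new ingredient is your scaling bound $d^{1-p}R^{-p}[\operatorname{Tail}(u_-;x_0,R)]^{p-1}\le C r^{-p}$ (the paper's new estimate of $L_2'$), followed by the standard truncation and measure-density argument of Di Castro--Kuusi--Palatucci. The one difference is cosmetic: the paper re-tests directly with $(u+d)^{1-p}w^p$, $w\in C_c^\infty(B_{7r}(x_0))$, whereas you invoke Lemma \ref{lmn3.4} at radius $6r$ (legitimate since $12r<R$) and then apply the local Poincar\'e inequality.
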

\begin{proof}
    Let $w\in C_c^\infty(B_{7r}(x_0))$ such that $w(x)=1$ for all $x\in B_{6r}(x_0)$, $w(x)\in [0,1]$ for all $x\in B_{7r}(x_0)$ with $|\nabla w|\leq \frac{8}{r}$. We set $v=u+d_\epsilon$, where
    \begin{align*}
        d_\epsilon=\frac{1}{2}\bigg(\frac{r}{R}\bigg)^{\frac{p}{p-1}}  \operatorname{Tail}\big(u_-;x_0,R\big)+\epsilon.
    \end{align*}
    Then, we see that $v$ is a weak supersolution of \eqref{MP}. We choose $\psi=v^{1-p}w^p$ as a test function.
    Now, the proof follows the same fashion as in the proof presented in \cite[Lemma $6.1$]{BGKL2026}, except for the \textbf{Estimate of $L'_2$}. We will give the modified estimate of $L_2'$ here.\\
\noindent \textbf{New Estimate of $L_2'$:}
        Using the support of $w$ in $B_{7r}(x_0)$, the definition of $v$ and the fact that  $v_-\leq u_-$, we get
            \begin{align*}
                L_2'=&\int_{(0,1)}C_{N, s, p}\left(\int_{\mathbb{R}^N\setminus {B_{8r}(x_0)} \cap \{v(y)<0\}}\int_{B_{8r}(x_0)} \frac{{|v(x)+v_-(y)|^{p-1}}{(v^{1-p}(x)w^p(x))}}{|x-y|^{N+s p}} \d x \d y\right) \d \mu(s)\nonumber\\
                \leq & C  r^N \int_{(0,1)}C_{N, s, p}\left( \int_{\mathbb{R}^N\setminus {B_{8r}(x_0)}} \bigg(1+\frac{u_-(y)}{d_\epsilon}\bigg)^{p-1} {\frac{8^{N+sp}}{|x_0-y|^{N+s p}}} \right) \d \mu(s) \nonumber\\
                 \leq & C r^N\sup_{s\in \Sigma}8^{N+sp}  \sup_{s\in\Sigma} r^{-sp} + C r^N \sup_{s\in \Sigma}8^{N+sp}  \bigg(\frac{1}{R}\bigg)^{p}  d_\epsilon^{1-p} [\operatorname{Tail}(u_-,x_0,R)]^{p-1} \leq C r^{N-p}.
            \end{align*}

This completes the proof.
\end{proof}

\begin{lem}\label{lmn6.2}
    Let $u$ be a weak supersolution of \eqref{MP} with $u\geq 0$ in $B_R(x_0)\subset \Omega$. Suppose that $\zeta\geq 0$ and that there exists $\sigma\in (0,1]$ such that 
    \begin{align*}
        |\{u\geq \zeta\} \cap B_r(x_0)| \geq \sigma |B_r(x_0)|,
    \end{align*}
    for some $r\in( 0,1]$ with $0<16r<R$. There exists a constant $\delta \in (0,\frac{1}{4})$ such that
    \begin{align}\label{eq6.19}
        \essi_{B_{4r}(x_0)} u \geq \delta \zeta -\bigg(\frac{r}{R}\bigg)^{\frac{p}{p-1}}  \operatorname{Tail}\big(u_-;x_0,R\big).
    \end{align}
\end{lem}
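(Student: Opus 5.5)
This is the standard De Giorgi iteration from the density estimate of Lemma \ref{lmn6.1}, following \cite[Lemma 3.2]{DKP2014} and \cite[Lemma 6.2]{BGKL2026}. We may assume $\zeta$ is large compared with the tail term. Otherwise $\delta\zeta-(r/R)^{p/(p-1)}\operatorname{Tail}(u_-;x_0,R)\le 0\le \essi_{B_4r}u$, and \eqref{eq6.19} holds trivially because $u\ge0$ in $B_R$.

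Set $v=u+d_\epsilon$ with $d_\epsilon=\frac12(r/R)^{p/(p-1)}\operatorname{Tail}(u_-;x_0,R)+\epsilon$, which is a nonnegative weak supersolution in $B_R$. Define levels $k_i=\delta\zeta+\frac{\delta\zeta}{2^{i}}$ for $i\ge0$, so that $k_0=2\delta\zeta$ and $k_i\searrow\delta\zeta$. Take radii $\rho_i=4r+2^{1-i}r$, so $\rho_0=6r$ and $\rho_i\searrow4r$, balls $B_i=B_{\rho_i}(x_0)$, and truncations $\phi_i=(k_i-v)_+$. Choose cut-offs $w_i$ that equal $1$ on $B_{i+1}$, are supported in $\bar B_i$, and satisfy $|\nabla w_i|\le C2^i/r$.

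Apply the Caccioppoli inequality, Lemma \ref{nw}(g), to the supersolution $v$ with $\phi_i$ and $w_i$. Then use the Sobolev inequality \eqref{sob} on the local term: since $\alpha>0$, only the gradient part is needed and the nonlocal terms on the left can be dropped. This gives
\[
(k_i-k_{i+1})^p\Big(\tfrac{|D_{i+1}|}{|B_{i+1}|}\Big)^{1/\eta}\le C\,r^{p}\cdot(\text{RHS})/|B_i|,
\qquad D_i=B_i\cap\{v<k_i\}.
\]
The right-hand side has three kinds of terms.

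The local term $\alpha\int\phi_i^p|\nabla w_i|^p$ and the nonlocal term $\int\!\!\int\max\{\phi_i\}^p|w_i(x)-w_i(y)|^p\,d\nu$ are both bounded by $C2^{ip}k_i^p r^{-p}|D_i|$. For the nonlocal one we use $|w(x)-w(y)|\le C2^i|x-y|/r$ and integrate in $s$, where $\mu$ is finite and $r\le1$. Here we use that $r^{-sp}\le r^{-p}$ for $r\le1$, exactly as in Lemma \ref{lmn6.1}.

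The tail term is the point where the new tail \eqref{Tail} is needed. Since $v\ge0$ in $B_R$, we have $\phi_i\le k_i$ there, and outside $B_R$ we have $\phi_i\le k_i+u_-$. Using \eqref{eq5.24}-type geometry, this gives
\[
\sup_x\int_{\mathbb{R}^N\setminus B_i}\frac{\phi_i^{p-1}(y)}{|x-y|^{N+sp}}\,dy\le C2^{i(N+p)}\Big(k_i^{p-1}r^{-sp}+R^{-p}\text{-weighted tail}\Big).
\]
After integrating in $\mu$, this is at most $C2^{i(N+p)}r^{-p}\big(k_i^{p-1}+(r/R)^{p}\operatorname{Tail}(u_-;x_0,R)^{p-1}\big)$. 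The second piece is at most $C r^{-p}d_\epsilon^{p-1}\le Cr^{-p}k_i^{p-1}$, provided $\delta\zeta\ge d_\epsilon$, which is the nontrivial case. Multiplying by $\int\phi_iw_i^p\le k_i|D_i|$ gives the same bound $C2^{i(N+2p)}k_i^p r^{-p}|D_i|$.

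Combining these, and using $k_i-k_{i+1}=2^{-i-1}\delta\zeta$ together with $k_i\le2\delta\zeta$, we get for $Y_i=|D_i|/|B_i|$ the recursion
\[
Y_{i+1}\le C\,b^{\,i}\,Y_i^{1+\beta},\qquad \beta=\eta-1>0,
\]
with $b=2^{\eta(N+3p)}$. By Lemma \ref{MI Lemma}, $Y_i\to0$ provided $Y_0\le C^{-1/\beta}b^{-1/\beta^2}=:\nu^*$, which depends only on $N,p,\mu,\Sigma$. Now $Y_0=|B_{6r}\cap\{u\le 2\delta\zeta-d_\epsilon\}|/|B_{6r}|$. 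Lemma \ref{lmn6.1} bounds this by $C/(\sigma\log\frac1{2\delta})$, so we fix $\delta\in(0,\frac14)$ small enough that this is at most $\nu^*$. Then $v\ge\delta\zeta$ a.e. on $B_{4r}$, that is, $u\ge\delta\zeta-d_\epsilon$. Letting $\epsilon\to0$ gives \eqref{eq6.19}, in fact with the factor $\frac12$ on the tail term.

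The main obstacle is the tail estimate: the factor $r^p/R^p$ must be produced uniformly in $s\in\Sigma$, which may accumulate at $0$. This is where the tail \eqref{Tail}, normalized by $r^p$ rather than $r^{sp}$, is essential. We must also check carefully that the near part $r^{-sp}$ is dominated by $r^{-p}$ via $r\le1$, as in the proof of Lemma \ref{lmn6.1}. Finally, $\delta$ depends on $\sigma$ through Lemma \ref{lmn6.1}, which is consistent with the statement.
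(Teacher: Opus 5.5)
Your proposal is correct and follows essentially the same route as the paper. It is a De Giorgi iteration over levels decreasing to $\delta\zeta$ and radii shrinking from $6r$ to $4r$. Lemma \ref{lmn6.1} supplies the smallness of the initial density. The new tail, together with $r\le 1$ (so that $r^{-sp}\le r^{-p}$ uniformly in $s$), controls the nonlocal term exactly as in the paper's estimate of $B_3$.

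The only cosmetic difference is where the $\epsilon$-shift enters. You iterate on $v=u+d_\epsilon$ starting from level $2\delta\zeta$, so that $Y_0$ is literally the set controlled in Lemma \ref{lmn6.1}. The paper instead works with $u$ and starts at $\tfrac32\delta\zeta$, under the assumption $\delta\zeta\ge (r/R)^{p/(p-1)}\operatorname{Tail}(u_-;x_0,R)+2\epsilon$. It obtains the bound with $-2\epsilon$ and then lets $\epsilon\to 0$.
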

\begin{proof} First, we prove that, for any $\epsilon>0$, there exists a constant $\delta \in \bigg(0, \frac{1}{4}\bigg)$ such that 
\begin{align}\label{eq6.20}
    \essi_{B_{4r}(x_0)} u \geq \delta \zeta -\bigg(\frac{r}{R}\bigg)^{\frac{p}{p-1}}  \operatorname{Tail}\big(u_-;x_0,R\big)-2\epsilon.
\end{align}
Therefore, as a limiting case of \eqref{eq6.20}, the property \eqref{eq6.19} is established.
  Now, we turn our attention to show \eqref{eq6.20}. For this, let us consider 
  \begin{align} \label{eq6.ee}
  \delta \zeta -\bigg(\frac{r}{R}\bigg)^{\frac{p}{p-1}}  \operatorname{Tail}\big(u_-;x_0,R\big) -2\epsilon \geq0.
  \end{align}
  Otherwise, since $u\geq 0$ in $B_R(x_0)$, the inequality $\eqref{eq6.19}$ is trivially true in case \eqref{eq6.ee} does not hold. 

From now on, we proceed to the prove this result as demonstrated in \cite[Lemma $6.2$]{BGKL2026} except the \textbf{Estimate of $B_3$}, which we derive below.

For $j\in \mathbb{N}\cup \{0\}$, we set
    \begin{align*}
        l=\zeta_j=\delta \zeta+2^{-(j+1)} \delta \zeta,~ \quad \gamma=\gamma_j=4r+2^{1-j}r,~ \quad \bar{\gamma_j}=\frac{\gamma_j+\gamma_{j+1}}{2}.
    \end{align*}
    Then $l\in (\delta \zeta, 2\delta \zeta)$, {$\gamma_j \in (4r,6r]$}, $\bar{\gamma_j} \in (4r,6r)$ and 
    \begin{align*}
        \zeta_j-\zeta_{j+1}=2^{-(j+2)}\delta \zeta \geq 2^{-(j+3)}\zeta_j.
    \end{align*}
    We denote $B_j=B_{\gamma_j}(x_0)$, $\bar{B_i}=B_{\bar{\gamma_j}}(x_0)$ and note that 
    \begin{align*}
        v_j=(\zeta_j-u)_+\geq  2^{-(j+3)}\zeta_j \chi_{\{u<\zeta_{j+1}\}},
    \end{align*}
    where $$\chi_A(x)=\begin{cases}
        &1 \text{ if } x\in A,\\
        &0 \text{ if } x\notin A.
    \end{cases}$$ Moreover $\zeta_j$, $\gamma_j$ are monotonically decreasing and $\gamma_{j+1}<\bar{\gamma_j}<\gamma_j$.
    
    Let $w_j \in C_c^\infty(\bar{B_j})$ be a sequence of function such that $w_i=1$ in $B_{j+1}$ and $0\leq w_i\leq 1$ with $|\nabla w_j|\leq \frac{2^{j+3}}{r}$ in $\bar{B_j}$.

 \noindent  \textbf{New Estimate of $B_3$:} Observe that, for any $ x\in \supp \{w_j\}=\bar{B_j}$ and $ y\in \mathbb{R}^N\setminus B_j$, we get
 
 \begin{align}\label{eq6.32}
    \frac{|y-x_0|}{|y-x|}\leq \frac{|y-x|+|x-x_0|}{|y-x|}\leq 1+\frac{\bar{\gamma_j}}{\gamma_j-\bar{\gamma_j}} \leq { 2^{j+5}}.
\end{align}
Using \eqref{eq6.32} along with the convexity we get
    \begin{align*}
        B_3\leq & \zeta_j |B_{\gamma_j}(x_0)\cap \{u<\zeta_j\}| \int_{(0,1)}C_{N, s, p}\left(2^{(j+5)(N+sp)}\int_{\mathbb{R}^N\setminus {B_{\gamma_j}(x_0)} }\frac{{ (\zeta_j+u_-(y))^{p-1}}{}}{|y-x_0|^{N+s p}} \d y \right) \d \mu(s)\nonumber \\
       \leq & \zeta_j |B_{\gamma_j}(x_0)\cap \{u<\zeta_j\}| \int_{(0,1)}C_{N, s, p}\bigg(2^{(j+5)(N+sp)}  \bigg\{ \zeta_j^{p-1} {\frac{r^{-sp}}{sp} }\nonumber \\
       &\quad\quad\quad +r^{-p} \bigg(\frac{r}{R}\bigg)^{p}R^{p} \int_{\mathbb{R}^N\setminus {B_{R}(x_0)}} \frac{u_-(y))^{p-1}}{|y-x_0|^{N+s p}} \bigg\} \d \mu(s) \nonumber \\
       \leq &C\sup_{s\in \Sigma}2^{j(N+sp)} \zeta_j |B_{\gamma_j}(x_0)\cap \{u<\zeta_j\}| r^{-p}\bigg(\zeta_j^{p-1}+\bigg(\frac{r}{R}\bigg)^{p}[\operatorname{Tail}(u_-;x_0,R)]^{p-1}\bigg)\nonumber \\
       \leq & C\sup_{s\in \Sigma}2^{j(N+sp)} \zeta_j^{p} |B_{\gamma_j}(x_0)\cap \{u<\zeta_j\}| r^{-p}.
    \end{align*}
 This completes the proof.
\end{proof}

Next, we present preliminary version of weak Harnack inequality based on the argument as in \cite[Lemma 4.1]{DKP2014} (see \cite[Lemma 6.4]{BGKL2026}). We briefly sketch the proof.

\begin{lem} \label{lmn6.4}

    Let $u$ be a weak supersolution of \eqref{MP} with $u\geq 0$ in $B_R(x_0)\subset \Omega$. There exist positive constants $\kappa:=\kappa(N,p,\mu,\Sigma) \in(0,1)$ and $C:=C(N,p,\mu,\Sigma)\in [1, \infty)$ such that
    \begin{align*}
         \bigg(\fint_{B_{{r}}(x_0)}u^\kappa \d x \bigg)^{\frac{1}{\kappa}} \leq C \essi_{B_r(x_0)}u +C{\bigg(\frac{r}{R}\bigg)^{\frac{p}{p-1}}} \operatorname{Tail}\big(u_-;x_0,R\big),
    \end{align*}
    where $B_r(x_0) \subset B_R(x_0)$ with $r\in (0,1]$.
\end{lem}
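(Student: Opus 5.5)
We follow the scheme of \cite[Lemma 4.1]{DKP2014}: combine the expansion of positivity (Lemma \ref{lmn6.2}) with the covering lemma (Lemma \ref{lmn6.30}), then pass from a level-set decay estimate to an $L^\kappa$ bound. Throughout we write
\[
T:=\Big(\tfrac{r}{R}\Big)^{\frac{p}{p-1}}\operatorname{Tail}(u_-;x_0,R)
\]
and, for $\epsilon>0$, set $\bar u:=u+T+\epsilon$. By Definition \ref{def2.11}, $\bar u$ is again a weak supersolution, and it is strictly positive in $B_R(x_0)$. It therefore suffices to show that
\[
\Big(\fint_{B_r}\bar u^\kappa\,\d x\Big)^{1/\kappa}\le C\,\essi_{B_r}\bar u,
\]
and then let $\epsilon\to 0$. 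The tail only enters through Lemma \ref{lmn6.2}. The point to check is that the tail term appearing there, taken at a smaller radius $\rho\le r$, is dominated by $T$; this holds because $(\rho/R)^{p/(p-1)}\le (r/R)^{p/(p-1)}$. The resulting constant then depends only on $N,p,\mu,\Sigma$. This is exactly where the factor $r^{p}$ in the new tail \eqref{Tail} is used: it makes the scaling uniform in $s\in\Sigma$, with no $r^{sp}$ mismatch.

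\textbf{Step 1 (level-set decay).} For $t>0$ and $i\in\mathbb N\cup\{0\}$, let $A^i_t:=\{u\le t\delta^i-T\}\cap B_r(x_0)$, where $\delta\in(0,1/4)$ is the constant from Lemma \ref{lmn6.2}. We claim that if $x\in B_r$ and $|A^{i-1}_t\cap B_{3\gamma}(x)|>\delta'|B_\gamma(x)|$, then $B_{3\gamma}(x)\cap B_r\subset A^{i}_t$, i.e.\ $(A^{i-1}_t)_{\delta'}\subset A^i_t$.

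To prove the claim, argue by contradiction. If there is a point of $B_{3\gamma}(x)\cap B_r$ where $u$ exceeds the level of $A^i_t$, apply Lemma \ref{lmn6.2} on a ball of radius comparable to $\gamma$ centred at $x$. We use Lemma \ref{lmn6.2} after first arranging the set $\{u\ge\zeta\}$ to have positive density; this can be done via the contrapositive form, with a suitably adjusted $\delta'$. This forces $u\ge\delta\zeta-T$ on $B_{3\gamma}(x)$, contradicting the density of the sublevel set.

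Lemma \ref{lmn6.30} then gives, for each $i$, either $|A^i_t|\ge \frac{C}{\delta'}|A^{i-1}_t|$ or $A^i_t=B_r$. Iterating, if $|A^{m}_t|>(\delta'/C)^{m}|B_r|$ for a suitable $m$, then $A^m_t=B_r$, and consequently $\essi_{B_r}u\le t\delta^m-T$.

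\textbf{Step 2 (integration).} Choose $m$ with $t\delta^{m}\approx \essi_{B_r}\bar u$. Step 1 then yields the distribution estimate
\[
|\{\bar u\le t\}\cap B_r|\le C\Big(\frac{\essi_{B_r}\bar u}{t}\Big)^{\beta}|B_r| \quad\text{for } t\ge \essi_{B_r}\bar u,
\]
equivalently $|\{\bar u>t\}|$ being small, with $\beta=\log(\delta'/C)/\log\delta>0$. Integrating via the layer-cake formula
\[
\fint_{B_r}\bar u^\kappa\,\d x=\kappa\int_0^\infty t^{\kappa-1}\frac{|\{\bar u>t\}\cap B_r|}{|B_r|}\,\d t
\]
for some $\kappa<\beta$ closes the argument. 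Strictly, one runs the covering on the superlevel formulation, exactly as in \cite{DKP2014}.

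\textbf{Main obstacle.} The main obstacle is Step 1's radius bookkeeping. Lemma \ref{lmn6.2} is stated at center $x_0$ with the hypothesis $16r<R$, whereas the covering argument applies it at centers $x\in B_r$ and radii $\gamma\le r$, with the tail still measured from $x_0$ at scale $R$. I would handle this by applying Lemma \ref{lmn6.2} with $R$ replaced by $R/2$ at center $x$. Using \eqref{eq3.3}-type comparisons $|y-x|\ge |y-x_0|/2$ for $y\notin B_{R/2}(x)$, together with $u\ge 0$ in $B_R$, this bounds $\operatorname{Tail}(u_-;x,R/2)\le C\operatorname{Tail}(u_-;x_0,R)$ with $C$ independent of $s$. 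This restricts us to $r\le R/32$. The remaining range $R/32<r\le R$ is then obtained by a finite covering argument, at the cost of a dimensional constant.
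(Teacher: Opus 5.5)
Your architecture (expansion of positivity from Lemma \ref{lmn6.2}, the covering Lemma \ref{lmn6.30}, then a layer-cake integration, following \cite{DKP2014}) is the same one the paper invokes. However, Step 1 runs in the wrong direction and, as written, is false.

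\textbf{The sublevel-set claim fails.} You use sublevel sets $A^i_t=\{u\le t\delta^i-T\}\cap B_r$ and claim $(A^{i-1}_t)_{\delta'}\subset A^i_t$. The constant solution $u\equiv c$ with $t\delta^i-T<c\le t\delta^{i-1}-T$ gives $A^{i-1}_t=B_r$ but $A^i_t=\emptyset$. The contradiction argument cannot even start. Lemma \ref{lmn6.2} gives only lower bounds for a supersolution, and its input is a superlevel set of positive density. A point, or any set of small measure, where $u$ exceeds a level does not supply that.

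\textbf{Everything downstream inherits the error.}
\begin{enumerate}
\item Since $A^i_t\subset A^{i-1}_t$, the alternative $|A^i_t|\ge\frac{C}{\delta'}|A^{i-1}_t|$ is impossible unless $|A^{i-1}_t|=0$.
\item $A^m_t=B_r$ would give an upper bound for $u$ on $B_r$ (you record it as $\essi_{B_r}u\le t\delta^m-T$). No upper bound can yield an inequality of the form $\big(\fint_{B_r}u^\kappa\big)^{1/\kappa}\lesssim\essi_{B_r}u+T$.
\item The bound $|\{\bar u\le t\}\cap B_r|\le C(\essi_{B_r}\bar u/t)^\beta|B_r|$ in Step 2 is false, since its left side tends to $|B_r|$ as $t\to\infty$. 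It is also not equivalent to smallness of $|\{\bar u>t\}\cap B_r|$.
\end{enumerate}
Your closing sentence about ``the superlevel formulation'' names the repair, but that repair is the entire content of the lemma.

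\textbf{What is needed.} This is what the paper's sets $E^i_t$ encode. Set $E^i_t=\{u>t\delta^i-\frac{T}{1-\delta}\}\cap B_r$, with $T$ enlarged by a fixed factor to absorb the change of centre; your bound $\operatorname{Tail}(u_-;x,R/2)\le C\operatorname{Tail}(u_-;x_0,R)$ for that is correct.
\begin{enumerate}
\item Suppose $|E^{i-1}_t\cap B_{3\gamma}(x)|>\delta'|B_\gamma(x)|$. Apply Lemma \ref{lmn6.2} on $B_{3\gamma}(x)$ with $\sigma=3^{-N}\delta'$ and $\zeta=t\delta^{i-1}-\frac{T}{1-\delta}$. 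This gives $u\ge\delta\zeta-T=t\delta^i-\frac{T}{1-\delta}$ on $B_{12\gamma}(x)$, so $(E^{i-1}_t)_{\delta'}\subset E^i_t$.
\item Apply Lemma \ref{lmn6.30} with $C/\delta'\ge2$. Then $|E^0_t|>2^{-m}|B_r|$ forces $E^m_t=B_r$, i.e.\ $\essi_{B_r}u\ge t\delta^m-\frac{T}{1-\delta}$.
\item This yields $|\{u>t\}\cap B_r|\le C\big((\essi_{B_r}u+T)/t\big)^{\beta}|B_r|$, and the layer-cake formula with $\kappa<\beta$ concludes.
\end{enumerate}

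\textbf{The level shift.} The shift $\frac{T}{1-\delta}$ is the fixed point of $a\mapsto\delta a+T$. With your constant shift $T$, each application of Lemma \ref{lmn6.2} costs another $T$, and the iteration does not close. Your reduction to $\bar u=u+T+\epsilon$ can be made to work. To do so you must use $\bar u\ge T$ in $B_R$ to get $\essi\bar u\ge\max\{\delta\zeta-T,T\}\ge\delta\zeta/2$, i.e.\ replace $\delta$ by $\delta/2$. The observation that the tail ``only enters through Lemma \ref{lmn6.2}'' does not by itself remove it.

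\textbf{The range $r$ near $R$.} This range cannot be recovered by a covering, because for $r=R$ the inequality fails. Take the solution of $A_{\mu,p}u=1$ in $B_R$ with $u=0$ outside. It is a nontrivial nonnegative supersolution in $W^{1,p}_0(B_R)$, has zero tail, and satisfies $\essi_{B_R}u=0$. So $r$ must be kept below a fixed fraction of $R$.
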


\begin{proof}
 For $\epsilon>0$ and $i=0,1,2,3,....$, we set
 \begin{align*}
     E^i_t:=\bigg\{x\in B_r(x_0):u(x)>\epsilon \delta^i -\frac{F}{1-\delta} \bigg\},
 \end{align*}
 where $\delta$ is given in Lemma \ref{lmn6.2} and 
 \begin{align*}
     F:=\bigg(\frac{r}{R}\bigg)^{\frac{p}{p-1}}\operatorname{Tail}\big(u_-;x_0,R\big).
 \end{align*}
 The remaining part runs parallel to  \cite[Lemma 4.1]{DKP2014}, together with Lemma \ref{lmn6.30}.
 \end{proof}

We now establish a Caccioppoli-type estimate for the new superposition tail  \eqref{Tail}.
\begin{lem}\label{lmn7.1}
    Let $p\in(1, \infty)$, $d>0$ and $q\in(1,p)$. Suppose that $u$ is a weak supersolution of \eqref{MP} such that $u\geq 0$ in $B_R(x_0)\subset \Omega$ and $\phi=(u+d)^\frac{p-q}{p}$. Then there exists a constant {$C:=C(p,q,N, \Sigma,\mu)>0$} such that
    \begin{align*}
       \alpha & \int_{B_r(x_0)} w^p|\nabla \phi|^p \d x \leq C\bigg[\alpha \int_{B_r(x_0)} \phi^p|\nabla w|^p \d x \nonumber \\
       &+ \int_{(0,1)} C_{N,s,p} \left( \iint_{B_r(x_0)\times B_r(x_0)} \frac{ \max\{\phi(x), \phi(y)\}^p|w(x)-w(y)|^p}{|x-y|^{N+sp}}\d x\d y\right)\d \mu \bigg] \nonumber \\
       &+C \bigg[ \int_{(0,1)} C_{N,s,p} \left( \ess_{x\in \supp \{w\}} \int_{\mathbb{R}^N\setminus B_r(x_0)}\frac{1}{|x-y|^{N+sp}} \d y \right) \d \mu \nonumber \\
       & + d^{1-p} \bigg(\frac{1}{R}\bigg)^{p}[\operatorname{Tail}\big(u_-(y);x_0,R\big)]^{p-1} \bigg]\bigg(\int_{B_r(x_0)}\phi^p w^p \d x \bigg)
    \end{align*}
    for any $B_r(x_0)\subset B_{\frac{3R}{4}}(x_0)$ and nonnegative function $w\in C_c^\infty(B_r(x_0))$.
\end{lem}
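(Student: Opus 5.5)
We follow the classical Caccioppoli argument for negative powers, as in \cite[Lemma 5.1]{DKP2016} and its mixed version in \cite{GK2022}. The only genuinely new ingredient is the treatment of the far-off nonlocal term with the tail \eqref{Tail}. Set $v=u+d$, which is again a weak supersolution and satisfies $v\geq d>0$ in $B_R(x_0)$. We test the supersolution inequality with
$$\psi=v^{1-q}w^p,$$
which is nonnegative, lies in $W_0^{1,p}(B_r(x_0))$, and is admissible because $v\geq d$ on $\supp w$. This yields an inequality of the form (local part) $+$ (nonlocal part over $B_r\times B_r$) $+$ (tail part) $\geq 0$.

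\textbf{Local term.} We compute
$$\nabla\psi=(1-q)v^{-q}w^p\nabla v+pv^{1-q}w^{p-1}\nabla w.$$
Since $q>1$, the first piece has a good sign. Moving it to the other side gives
$$\alpha(q-1)\int v^{-q}w^p|\nabla v|^p\le \alpha p\int v^{1-q}w^{p-1}|\nabla v|^{p-1}|\nabla w|.$$
We apply Young's inequality with exponents $p/(p-1)$ and $p$, absorb, and obtain
$$\alpha\int v^{-q}|\nabla v|^pw^p\le C\alpha\int v^{p-q}|\nabla w|^p.$$
Since $|\nabla\phi|^p=c_{p,q}v^{-q}|\nabla v|^p$ and $\phi^p=v^{p-q}$, this controls $\alpha\int w^p|\nabla\phi|^p$ by $C\alpha\int\phi^p|\nabla w|^p$.

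\textbf{Nonlocal term on $B_r\times B_r$.} For each fixed $s$ we use the pointwise algebraic inequality from \cite[Lemma 5.1]{DKP2016}, valid for $q\in(1,p)$:
$$\mathcal A(v(x,y))\big(v(x)^{1-q}w^p(x)-v(y)^{1-q}w^p(y)\big)\le -c\,|\phi(x)-\phi(y)|^p\min\{w(x),w(y)\}^p+C\max\{\phi(x),\phi(y)\}^p|w(x)-w(y)|^p.$$
Integrating this against $C_{N,s,p}\,\d\nu$ and then in $\d\mu(s)$, the negative part can be discarded, since only the local gradient appears on the left-hand side of the claim. The positive part is exactly the $\max\{\phi(x),\phi(y)\}^p|w(x)-w(y)|^p$ term in the statement.

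\textbf{Tail term.} This is the step needing care. For $x\in\supp w$ and $y\notin B_r(x_0)$ we have
$$\mathcal A(v(x,y))\le (v(x)-v(y))_+^{p-1}.$$
We split the $y$-integration into two regions.
\begin{itemize}
\item[(i)] $y\in B_R\setminus B_r$. Here $v(y)\ge 0$, so $(v(x)-v(y))_+^{p-1}\le v(x)^{p-1}$, and the contribution is bounded by
$$\int_{(0,1)} C_{N,s,p}\,\ess_{x\in\supp w}\int_{\mathbb R^N\setminus B_r}|x-y|^{-N-sp}\,\d y\,\d\mu\cdot\int_{B_r}v^{p-q}w^p\,\d x.$$
\item[(ii)] $y\in\mathbb R^N\setminus B_R$. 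Here
$$(v(x)-v(y))_+^{p-1}\le 2^{p-1}\big(v(x)^{p-1}+u_-(y)^{p-1}\big).$$
The first summand is handled as in (i). For the second we use $v(x)^{1-q}\le d^{1-p}v(x)^{p-q}$, which holds because $v\ge d$ and $1-q=(p-q)+(1-p)$.
\end{itemize}

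The hard point is converting the kernel $|x-y|^{-N-sp}$ into the new tail, which carries the weight $R^{p}$ rather than $R^{sp}$. For $x\in B_r\subset B_{3R/4}$ and $|y-x_0|\ge R$ we have $|y-x_0|\le 4|y-x|$. Hence
$$\int_{\mathbb R^N\setminus B_R}\frac{u_-^{p-1}(y)}{|x-y|^{N+sp}}\,\d y\le 4^{N+p}\int_{\mathbb R^N\setminus B_R}\frac{u_-^{p-1}(y)}{|y-x_0|^{N+sp}}\,\d y.$$
Multiplying by $C_{N,s,p}$, integrating in $\mu$, and inserting $R^pR^{-p}$ gives exactly
$$R^{-p}[\operatorname{Tail}(u_-;x_0,R)]^{p-1}.$$
Note that $\sup_{s\in\Sigma}4^{N+sp}\le 4^{N+p}$, so no lower bound on $s$ is needed. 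Collecting all terms and recalling $\phi^pw^p=v^{p-q}w^p$ gives the stated estimate.
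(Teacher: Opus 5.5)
Your proposal is correct and follows essentially the same route as the paper: the test function $\psi=v^{1-q}w^p$ with $v=u+d$, the local and $B_r\times B_r$ terms handled by the standard negative-power Caccioppoli argument (which the paper simply defers to \cite[Lemma 6.5]{BGKL2026}), and the far-off term handled via $v\ge d$ in $B_R$, $v^{1-q}\le d^{1-p}v^{p-q}$ and $|y-x_0|\le 4|y-x|$ to produce $R^{-p}[\operatorname{Tail}(u_-;x_0,R)]^{p-1}$, exactly as in the paper's new estimate of $J_2$. Two cosmetic points: your displayed local inequality is valid only if the nonlocal contributions stay on its right-hand side until all terms are collected, and the pointwise inequality you quote comes from the negative-power Caccioppoli lemma of \cite{DKP2014} rather than \cite[Lemma 5.1]{DKP2016}, which in this paper denotes the oscillation lemma.
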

\begin{proof}
    Let $v=u+d$ and $q\in [1+\epsilon,p-\epsilon]$ for $d>0$ and small enough $\epsilon>0$. Then $v$ is a weak supersolution of \eqref{MP}. By selecting $\psi=v^{1-q}w^p$ as a test function, the proof of this result is similar to that of \cite[Lemma $6.5$]{BGKL2026} except the  \textbf{Estimate of $J_2$}.\\
    \noindent \textbf{New estimate of $J_2$:} Observe that \begin{enumerate}
        \item[(i)] $|v(x)-v(y)|^{p-2}(v(x)-v(y))\leq C(v(x))^{p-1}+C(v_-(y))^{p-1}$,
        \item[(ii)] $v^{1-q}(x)\leq d^{1-p}v^{p-q}(x)$,
        \item[(iii)] $(v_-(y))=0$ for all $y\in B_R(x_0)$.
     \end{enumerate}
     Using above observation, we get 
    \begin{align*}
        J_2 \leq& \int_{(0,1)}C_{N, s, p}\left(\int_{\mathbb{R}^N\setminus {B_r(x_0)}}\int_{B_r(x_0)}{C\{(v(x))^{p-1} + v^{p-1}_-(y)\} v^{1-q}(x)w^p(x)}\d \nu \right) \d \mu \nonumber \\
        \leq & C \int_{(0,1)}C_{N, s, p} \bigg( \ess_{x\in \supp \{w\}} \int_{\mathbb{R}^N\setminus B_r(x_0)} \frac{1}{|x-y|^{N+sp}}\d y\nonumber \\
        &+d^{1-p}{4^{N+sp}} \int_{\mathbb{R}^N\setminus B_R(x_0)} (v_-(y))^{p-1} {\frac{1}{|y-x_0|^{N+sp}}} \d y \bigg)\bigg(\int_{B_r(x_0)}\phi^p w^p \d x \bigg) \d \mu \nonumber \\
        \leq & C \int_{(0,1)}C_{N, s, p} \bigg( \ess_{x\in \supp\{ w\}} \int_{\mathbb{R}^N\setminus B_r(x_0)} \frac{1}{|x-y|^{N+sp}}\d y\bigg) \bigg(\int_{B_r(x_0)}\phi^p w^p \d x \bigg) \d \mu \nonumber \\
        &+C d^{1-p} {\sup_{s\in \Sigma} 4^{N+sp} } \bigg(\frac{1}{R}\bigg)^{p}[\operatorname{Tail}\big(v_-(y);x_0,R\big)]^{p-1}  \bigg(\int_{B_r(x_0)}\phi^p w^p \d x \bigg)\nonumber \\
        \leq &  C \bigg\{\int_{(0,1)}C_{N, s, p} \bigg( \ess_{x\in \supp \{w\}} \int_{\mathbb{R}^N\setminus B_r(x_0)} \frac{1}{|x-y|^{N+sp}}\d y \bigg) \d \mu \nonumber \\
        &+ d^{1-p} \bigg(\frac{1}{R}\bigg)^{p}[\operatorname{Tail}\big(v_-(y);x_0,R\big)]^{p-1} \bigg\} \bigg(\int_{B_r(x_0)}\phi^p w^p \d x \bigg). 
    \end{align*}
    
    This completes the proof.
\end{proof}

Subsequently, we demonstrate the tail estimate \eqref{MP} using the method established in \cite[Lemma 4.2]{DKP2014}.
\begin{lem}[Tail Estimate]\label{lmn6.3}
    Let $u$ be a weak solution of \eqref{MP} and $x_0\in \Omega, R>0$ such that $B_R(x_0)\subset \Omega$ with $u \geq 0$ in $B_R(x_0)$. Then, there exists a constant $C:=C(N,p,\Sigma, \mu)>0$ such that, for any $0<r<R$ with $r\in (0,1]$, the following holds:
    $$\operatorname{Tail}(u_+;x_0,r)\leq C \ess_{B_r(x_0)}u +C\bigg( \frac{r}{R}\bigg)^\frac{p}{p-1}\operatorname{Tail}(u_-;x_0,R).$$
\end{lem}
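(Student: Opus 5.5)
We follow the strategy of \cite[Lemma 4.2]{DKP2014}: test the equation with a function that is a multiple of a cutoff, so that the nonlocal interaction with the exterior of the ball produces the tail of $u_+$ with a definite sign, while everything else is controlled by $\ess_{B_r}u$ and the tail of $u_-$. Set $M:=\ess_{B_r(x_0)}u$, which may be assumed finite by Theorem \ref{bddness}. Take $w\in C_c^\infty(B_{r/2}(x_0))$ with $0\le w\le 1$, $w\equiv1$ on $B_{r/4}(x_0)$ and $|\nabla w|\le C/r$. Use the test function $v=(u-2M)w^p\in W^{1,p}_0(B_{r/2})$ in the weak formulation (Definition \ref{def2.11}), so that the identity holds with equality. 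In $B_{r/2}$ we have $0\le u\le M$, hence $-2M\le u-2M\le -M$ there.

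\textbf{Local term.} Writing $\nabla v=\nabla u\,w^p+p(u-2M)w^{p-1}\nabla w$, the first part gives $\alpha\int|\nabla u|^pw^p$ with a positive sign. The second part is absorbed by Young's inequality, leaving $C\alpha M^p r^{N-p}$.

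\textbf{Nonlocal term.} For each $s$, split $\mathbb{R}^{2N}$ into $B_r\times B_r$ and twice $B_r\times(\mathbb{R}^N\setminus B_r)$. On $B_r\times B_r$ the standard DKP algebra, using $|u-2M|\le 2M$ and $|w(x)-w(y)|\le C|x-y|/r$, bounds the contribution from below by $-CM^p\int_{B_r}\int_{B_r}|w(x)-w(y)|^p\,\d\nu$. After integrating in $\mu$, this is $\ge -CM^p r^{N-p}\int_{(0,1)}C_{N,s,p}\frac{r^{(1-s)p}}{1-s}\d\mu$. Here the normalization $C_{N,p,s}\sim(1-s)$ makes the integrand bounded uniformly in $s$ near $1$, and $r\le1$ gives $r^{N-sp}\le r^{N-p}$.

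On the cross region, for $x\in B_{r/2}$ and $y\notin B_r$, the integrand is $\mathcal{A}(u(x,y))(u(x)-2M)w^p(x)$.
\begin{itemize}
\item If $u(y)\ge 2M$, then $u(y)-u(x)\ge u(y)-M\ge u(y)/2$ and $u(x)-2M\le -M$. The integrand is then $\ge c\,M\,u_+(y)^{p-1}w^p(x)$, which is the good term.
\item If $u(y)<2M$, then $(u(x)-u(y))_+^{p-1}\le C(M^{p-1}+u_-(y)^{p-1})$, so the integrand is $\ge -CM(M^{p-1}+u_-(y)^{p-1})w^p(x)$.
\end{itemize}
Integrating, and using $|x-y|\le 2|y-x_0|$ on the cross region, the good term is at least
$$cM r^N\int_{(0,1)}C_{N,s,p}\int_{\mathbb{R}^N\setminus B_r}\frac{u_+^{p-1}(y)-C M^{p-1}}{|y-x_0|^{N+sp}}\,\d y\,\d\mu.$$
Since $u_-=0$ in $B_R$, the $u_-$ contribution involves only $\mathbb{R}^N\setminus B_R$. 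Multiplying everything by $r^{p}$ produces exactly $r^{p-sp}$-weighted quantities, i.e.\ the new tail \eqref{Tail}. The $u_+$ part becomes $[\operatorname{Tail}(u_+;x_0,r)]^{p-1}$. The $M^{p-1}$ part becomes $M^{p-1}\int C_{N,s,p}r^{p-sp}s^{-1}\d\mu\le CM^{p-1}$; here $C_{N,p,s}/s$ is bounded because $C_{N,p,s}\sim s$ near $0$, which is the point of using $r^p$ rather than $r^{sp}$. The $u_-$ part becomes $(r/R)^p[\operatorname{Tail}(u_-;x_0,R)]^{p-1}$, since $r^p\int_{\mathbb{R}^N\setminus B_R}=(r/R)^pR^p\int_{\mathbb{R}^N\setminus B_R}$.

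\textbf{Conclusion.} Collecting the terms gives
$$M\,r^{N-p}[\operatorname{Tail}(u_+;x_0,r)]^{p-1}\le C M^p r^{N-p}+CM r^{N-p}\Big(\frac rR\Big)^p[\operatorname{Tail}(u_-;x_0,R)]^{p-1}.$$
Dividing by $M r^{N-p}$ (if $M=0$, replace $M$ by $M+\epsilon$ and let $\epsilon\to0$) and taking $(p-1)$-th roots yields the claim with the factor $(r/R)^{p/(p-1)}$.

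\textbf{Main obstacle.} The hard part is the uniformity in $s$. One must check that each $s$-dependent constant ($1/s$ from the exterior integrals, $1/(1-s)$ from the cutoff energy, and the factors $2^{N+sp}$) is compensated by $C_{N,p,s}$ or bounded by $\sup_{s\in\Sigma}$, so that $\mu$-integrability holds even when $\Sigma$ accumulates at $0$ or $1$. This is where the $r^p$-normalized tail \eqref{Tail} and $r\le1$ are used.
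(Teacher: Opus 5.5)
Your proposal is correct and follows essentially the same route as the paper: the scheme of \cite[Lemma 4.2]{DKP2014} with test function $(u-2M)w^p$, $M=\ess_{B_r(x_0)}u$, split into cross, inner and local contributions. In both, the new tail lets one pull out the common factor $r^{N-p}$, and the $s$-dependent exterior factors are absorbed using that $C_{N,p,s}/s$ is bounded and $r^{-sp}\le r^{-p}$ for $r\le 1$. Your two deviations are more careful than the paper's argument. First, taking $\supp w\subset B_{r/2}(x_0)$ gives the two-sided comparison $|x-y|\simeq|y-x_0|$ needed for the negative cross terms. Second, you bound the inner term by $-CM^p\iint_{B_r\times B_r}|w(x)-w(y)|^p\,d\nu$ and control it through $C_{N,p,s}/(1-s)$, whereas the paper's estimate of $J_2$ passes through $\iint_{B_r\times B_r}w(y)^p|x-y|^{-N-sp}\,dx\,dy$, an integral that diverges.
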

\begin{proof}
    Let $w\in C_c^\infty(B_r(x_0))$ be a function such that $0\leq w\leq 1$, $|\nabla w|\leq \frac{8}{r}$ and $w \equiv 1$ in $B_{\frac{r}{2}}(x_0)$. Define $l:=\ess_{B_r(x_0)}u$ and $v=u-2l$. Since $u$ is a weak solution to \eqref{MP}, taking $\psi:=vw^p$ as a test function, we get
    \begin{align}\label{6.3.1}
        0=&\int_{(0,1)}C_{N, s, p}\left(\iint_{\mathbb{R}^{2 N}} \frac{ \mathcal{A}(u(x,y))(\psi(x)-\psi(y))}{|x-y|^{N+s p}} \d x \d y\right) \d \mu(s)\nonumber \\ 
& + \alpha \int_{\Omega} |\nabla u(x)|^{p-2}\nabla u(x) \cdot \nabla \psi(x) \d x\nonumber\\
=&2\int_{(0,1)}C_{N, s, p} \left(\int_{\mathbb{R}^{N}\setminus B_r(x_0)}\int_{B_r(x_0)} \frac{ \mathcal{A}(u(x,y))\psi(x)}{|x-y|^{N+s p}} \d x \d y\right) \d \mu(s)\nonumber\\
& +\int_{(0,1)}C_{N, s, p}\left(\iint_{B_r(x_0)\times B_r(x_0)} \frac{ \mathcal{A}(u(x,y))(\psi(x)-\psi(y))}{|x-y|^{N+s p}} \d x \d y\right) \d \mu(s)\nonumber\\
& \quad +\alpha \int_{\Omega} |\nabla u(x)|^{p-2}\nabla u(x) \cdot \left(pw^{p-1}v\nabla w+w^p\nabla v\right) \d x\nonumber\\
:=&J_1+J_2+J_3.
    \end{align}
    \textbf{Estimate of $J_1$:} Clearly,
    \begin{align}\label{6.3.4}
        J_1=&2\int_{(0,1)}C_{N, s, p}\left(\int_{\mathbb{R}^{N}\setminus B_r(x_0)\cap \{u(y)\geq l\}}\int_{B_r(x_0)} \frac{ \mathcal{A}(u(x,y))\psi(x)}{|x-y|^{N+s p}} \d x \d y\right) \d \mu(s)\nonumber\\
        &+2\int_{(0,1)}C_{N, s, p}\left(\int_{\mathbb{R}^{N}\setminus B_r(x_0)\cap \{u(y)< l\}}\int_{B_r(x_0)} \frac{ \mathcal{A}(u(x,y))\psi(x)}{|x-y|^{N+s p}} \d x \d y\right) \d \mu(s)\nonumber\\
        \geq& 2l\int_{(0,1)}C_{N, s, p}\left(\int_{\mathbb{R}^{N}\setminus B_r(x_0)}\int_{B_r(x_0)} \frac{(u(y)-l)_+^{p-1}w^p(x)}{|x-y|^{N+s p}} \d x \d y\right) \d \mu(s)\nonumber\\
        & -4l\int_{(0,1)}C_{N, s, p}\left(\int_{\mathbb{R}^{N}\setminus B_r(x_0)\cap \{u(y)< l\}}\int_{B_r(x_0)} \frac{(u(x)-u(y))_+^{p-1}w^p(x)}{|x-y|^{N+s p}} \d x \d y\right) \d \mu(s)\nonumber\\
        :=&2J'_1-4J''_1.
    \end{align}
   \textbf{Estimate of $J'_1$:}
   Observe that $w \equiv 1$ in $B_{\frac{r}{2}}(x_0)$ and $|x-y|\leq 2|x_0-y|$ for all $x\in B_{{r}}(x_0), y\in \mathbb{R}^{N}\setminus B_r(x_0)$. Therefore, we have
    \begin{align}\label{6.3.5}
        J'_1
        \geq &l\int_{(0,1)}C_{N, s, p}\left(\int_{\mathbb{R}^{N}\setminus B_r(x_0)}\int_{B_r(x_0)} \frac{(u(y)-l)_+^{p-1}w^p(x)}{2^{N+sp}|x_0-y|^{N+s p}} \d x \d y\right) \d \mu(s) \nonumber \\
        \geq& Cl\int_{(0,1)} C_{N, s, p} \left(\int_{\mathbb{R}^{N}\setminus B_r(x_0)}\int_{B_{\frac{r}{2}}(x_0)} \frac{u_+(y)^{p-1}w^p(x)}{2^{N+sp}|x_0-y|^{N+s p}} \d x \d y\right) \d \mu(s)\nonumber\\
        &  -Cl^p \int_{(0,1)}C_{N, s, p} \left(\int_{\mathbb{R}^{N}\setminus B_r(x_0)}\int_{B_r(x_0)} \frac{w^p(x)}{2^{N+sp}|x_0-y|^{N+s p}} \d x \d y\right) \d \mu(s)\nonumber\\
       \geq &{ Cl\int_{(0,1)}C_{N, s, p}|B_{\frac{r}{2}}(x_0)|\left(\int_{\mathbb{R}^{N}\setminus B_r(x_0)} \frac{u_+(y)^{p-1}}{2^{N+sp}|x_0-y|^{N+s p}} \d y\right) \d \mu(s)}\nonumber\\
         & -Cl^p\int_{(0,1)}C_{N,s,p}|B_r(x_0)|\frac{1}{s}\frac{1}{2^{N+sp}}r^{-sp}\d \mu(s)\nonumber\\
        \geq & Cl|B_r(x_0)|r^{-p}\left(\inf\limits_{s\in \Sigma}\frac{1}{2^{N+sp}}\right)\left[\operatorname{Tail}(u_+;x_0,r)\right]^{p-1}\nonumber \\
        &-\bigg(\sup_{s\in \Sigma}\frac{1}{2^{N+sp}} \bigg)Cl^p|B_r(x_0)|\left(\sup\limits_{s\in \Sigma\mu}r^{-sp}\right)\nonumber\\
        \geq& Cl|B_r(x_0)|r^{-p}\left[\operatorname{Tail}(u_+;x_0,r)\right]^{p-1}-Cl^p|B_r(x_0)|r^{-p}.
    \end{align}
    \textbf{Estimate of $J''_1$:} Using the inequality
    \begin{equation}\label{6.3.10}
      { (a+b)^q\leq \max\{2^{q-1},1\}(a^q+b^q), a>0,b>0 \text{ and }q>0}
    \end{equation}
    for $q=p-1$ and proceeding similar to \eqref{6.3.5}, we get
    \begin{align}\label{6.3.6} 
       J_1'' &\leq Cl\int_{(0,1)}C_{N, s, p}\left(\int_{\mathbb{R}^{N}\setminus B_R(x_0)}\int_{B_{r}(x_0)} \frac{(l+u_-(y))^{p-1}w^p(x)}{|x-y|^{N+s p}} \d x \d y\right) \d \mu(s)\nonumber\\
        &\quad +Cl\int_{(0,1)}C_{N, s, p}\left(\int_{B_R(x_0)\setminus B_r(x_0)}\int_{B_r(x_0)} \frac{l^{p-1}w^p(x)}{|x-y|^{N+s p}} \d x \d y\right) \d \mu(s)\nonumber\\
        &\leq Cl|B_r|R^{-p}\left[\operatorname{Tail}(u_-;x_0,R)\right]^{p-1}+Cl^p|B_r|r^{-p}.
    \end{align}
    Combining \eqref{6.3.4}--\eqref{6.3.6}, we deduce
    \begin{align}\label{6.3.7}
        J_1&\geq  -Cl|B_r|R^{-p}\left[\operatorname{Tail}(u_-;x_0,R)\right]^{p-1}\nonumber\\
        &\quad -Cl^p|B_r|r^{-p}+Cl|B_r|r^{-p}\left[\operatorname{Tail}(u_+;x_0,r)\right]^{p-1}.
    \end{align}
    \textbf{Estimate of $J_2$:} Estimating similar to $J_1$, we get
    \begin{align}\label{6.3.8}
        J_2&\geq \int_{(0,1)}C_{N, s, p}\left(\iint_{B_r(x_0)\times B_r(x_0)} \frac{ \mathcal{A}(u(x,y))\psi(x)}{|x-y|^{N+s p}} \d x \d y\right) \d \mu(s)\nonumber\\
        &\quad -\int_{(0,1)}C_{N, s, p}\left(\iint_{B_r(x_0)\times B_r(x_0)} \frac{ \mathcal{A}(u(x,y))\psi(y)}{|x-y|^{N+s p}} \d x \d y\right) \d \mu(s)\nonumber\\
        &{=-2 \int_{(0,1)}C_{N, s, p}\left(\iint_{B_r(x_0)\times B_r(x_0)} \frac{ \mathcal{A}(u(x,y))(u(y)-2l)w(y)^p}{|x-y|^{N+s p}} \d x \d y\right) \d \mu(s)}\nonumber\\
        &\geq -Cl^p\int_{(0,1)} C_{N, s, p} \left(\iint_{B_r(x_0)\times B_r(x_0)} \frac{w(y)^p}{|x-y|^{N+s p}} \d x \d y\right) \d \mu(s)\nonumber\\
        &\geq -Cl^p|B_r(x_0)|\left(\sup\limits_{s\in \operatorname{supp}\mu}r^{-sp}\right)\nonumber\\
        &\geq -Cl^p|B_r(x_0)|r^{-p}.
    \end{align}
    \textbf{Estimate of $J_3$:} Recall that we have $|\nabla w|\leq \frac{8}{r}$ in $B_r(x_0)$. Also, $|v|=|u-2l|\leq |u|+2l\leq 3l$ in $B_r(x_0)$ and $\nabla (u-2l)=\nabla u$. Thus, using Young's inequality, we deduce
    \begin{align}\label{6.3.2}
        pw^{p-1}v|\nabla u|^{p-2}\nabla u.\nabla w&\leq \frac{1}{2}w^p|\nabla u|^p+C(p)|u-2l|^p|\nabla w|^p \nonumber\\
        &\leq \frac{1}{2}w^p|\nabla u|^p+C(p)l^p|\nabla w|^p.
    \end{align}
    Substituting \eqref{6.3.2} in $J_3$, we get
    \begin{align}\label{6.3.3}
        J_3&\geq \int_{B_r(x_0)}\left(\frac{1}{2}w^p|\nabla u|^p-C(p)l^p|\nabla w|^p\right)\d x \nonumber\\
        &\geq -C(p)\int_{B_r(x_0)}l^p|\nabla w|^p \d x \nonumber\\
        &\geq -C(p)l^pr^{-p}|B_r(x_0)|.
    \end{align}
    Substituting the estimates obtained in \eqref{6.3.7}, \eqref{6.3.8}, and \eqref{6.3.3}, in \eqref{6.3.1}, and dividing throughout by $r^{-p}l|B_r|$, we deduce
    \begin{align}\label{6.3.9}
       \left[\operatorname{Tail}(u_+;x_0,r)\right]^{p-1}&\leq Cl^{p-1}+C\bigg(\frac{r}{R}\bigg)^p\left[\operatorname{Tail}(u_-;x_0,R)\right]^{p-1}.
    \end{align}
    Taking $\frac{1}{(p-1)}$-th power in both sides of \eqref{6.3.9} and applying \eqref{6.3.10} with $q=\frac{1}{p-1}$, we obtained the desired result.
\end{proof}

We now prove the Harnack inequality:

\subsection{Proof of the Theorem \ref{T1.5}}
\begin{proof}
   From Lemma \ref{nw}-$(c)$, we get $u$ is a weak subsolution to \eqref{MP}. Consider $0<r'<r$. Then, by Theorem \ref{bddness}, there exists a constant $C:=C(N,p,\Sigma,\mu)>0$ such that 
   \begin{equation}\label{8.3-1}
        \ess_{B_{\frac{r'}{2}(x_0)}} u \leq
            \delta \operatorname{Tail}\bigg(u_+;x_0,\frac{r'}{2}\bigg)+C\delta^{-\frac{(p-1)\eta}{(\eta-1)p}}\bigg( \fint_{B_{r'}(x_0)}u^p \d x \bigg)^\frac{1}{p}.
    \end{equation}
    Now, \eqref{8.3-1} and Lemma \ref{lmn6.3} gives
    \begin{align}\label{8.3-2}
        \ess_{B_{\frac{r'}{2}(x_0)}} u &\leq
            \delta \Big(C\ess_{B_{r'}(x_0)}u +C\bigg( \frac{r'}{R}\bigg)^\frac{p}{p-1}\operatorname{Tail}(u_-;x_0,R)\Big)\nonumber\\
            &\quad +C\delta^{-\frac{(p-1)\eta}{(\eta-1)p}}\bigg( \fint_{B_{r'}(x_0)}u^p \d x \bigg)^\frac{1}{p},
    \end{align}
    where $C=C(N,p,\Sigma,\mu)>0$. Now, consider $\epsilon_1, \epsilon_2 \in [\frac{1}{2},1]$ such that $\epsilon_1<\epsilon_2$ and take $r'=\epsilon_2 r-\epsilon_1 r$. Then, for any $y\in B_{\epsilon_1 r} (x_0)$, clearly $B_{r'} (y) \subset B_{\epsilon_2 r} (x_0)$. From \eqref{8.3-2}, we have
    \begin{align}\label{8.3-3}
        \ess_{B_{\frac{r'}{2}}(y)} u &\leq
            C\delta \Big(\ess_{B_{r'}(y)}u +C\bigg( \frac{r'}{R}\bigg)^\frac{p}{p-1}\operatorname{Tail}(u_-;x_0,R)\Big)\nonumber\\
            &\quad \quad \quad +C\delta^{-\frac{(p-1)\eta}{(\eta-1)p}}\bigg( \fint_{B_{r'}(y)}u^p \d x \bigg)^\frac{1}{p} \nonumber\\
            &\leq C \delta \Big(\ess_{B_{\epsilon_2 r}(x_0)}u +\bigg( \frac{r}{R}\bigg)^\frac{p}{p-1}\operatorname{Tail}(u_-;x_0,R)\Big) \nonumber\\
            & \quad \quad \quad+C\delta^{-\frac{(p-1)\eta}{(\eta-1)p}}(\epsilon_2-\epsilon_1)^{-\frac{N}{p}}\bigg( \fint_{B_{\epsilon_2 r}(x_0)}u^p \d x \bigg)^\frac{1}{p}.
    \end{align}
    Clearly, the set $\{B_\frac{r'}{2}(y): y\in B_{\epsilon_1 r}(x_0)\}$ covers $B_{\epsilon_1 r}(x_0)$. Let $t\in (0,p)$. Using \eqref{8.3-3} and Young's inequality, we get
    \begin{align}\label{8.3-4}
        \ess_{B_{\epsilon_1 r}(x_0)} u &\leq C \delta \Big(\ess_{B_{\epsilon_2 r}(x_0)}u +\bigg( \frac{r}{R}\bigg)^\frac{p}{p-1}\operatorname{Tail}(u_-;x_0,R)\Big) \nonumber\\
        & \quad +C\delta^{-\frac{(p-1)\eta}{(\eta-1)p}}(\epsilon_2-\epsilon_1)^{-\frac{N}{p}}(\ess_{B_{\epsilon_2 r}(y)} u)^{\frac{p-t}{p}}\bigg( \fint_{B_{\epsilon_2 r}(y)}u^t \d x \bigg)^\frac{1}{p} \nonumber\\
        &\leq C\delta \Big(\ess_{B_{\epsilon_2 r}(x_0)}u +\bigg( \frac{r}{R}\bigg)^\frac{p}{p-1}\operatorname{Tail}(u_-;x_0,R)\Big) \nonumber\\
        & \quad +C_1 \delta^{-\frac{(p-1)\eta}{t(\eta-1)}}(\epsilon_2-\epsilon_1)^{-\frac{N}{t}}\bigg( \fint_{B_{ r}(x_0)}u^t \d x \bigg)^\frac{1}{t},
    \end{align}
    with $C_1=C(N,p,\Sigma,\mu,\delta)>0$. Choosing $\delta>0$ sufficiently small, and using \eqref{8.3-4} we have
    \begin{align}\label{8.3-5}
        \ess_{B_{\epsilon_1 r}(x_0)} u&\leq \frac{1}{2}\ess_{B_{\epsilon_2 r}(x_0)}u+C\bigg( \frac{r}{R}\bigg)^\frac{p}{p-1}\operatorname{Tail}(u_-;x_0,R) \nonumber\\
        & \quad + C(\epsilon_2-\epsilon_1)^{-\frac{N}{t}}\bigg( \fint_{B_{ r}(x_0)}u^t \d x \bigg)^\frac{1}{t},
    \end{align}
    where the constant $C=C(N,p,\Sigma,\mu,t)>0$. Applying  \cite[Lemma $8.2$]{GK2022} in \eqref{8.3-5} and then using Lemma \ref{lmn6.4} with $t=\kappa$, we deduce
    \begin{align*}
        \ess_{B_{\frac{r}{2}}(x_0)} u& \leq C\bigg( \fint_{B_{ r}(x_0)}u^t \d x \bigg)^\frac{1}{t}+C\bigg( \frac{r}{R}\bigg)^\frac{p}{p-1}\operatorname{Tail}(u_-;x_0,R) \\
        & \leq C \essi_{B_r(x_0)}u+C\bigg( \frac{r}{R}\bigg)^\frac{p}{p-1}\operatorname{Tail}(u_-;x_0,R),
    \end{align*}
    where $C=C(N,p,\Sigma,\mu)$. This completes the proof.
\end{proof}

Now, we are ready to establish the weak Harnack inequality.

\subsection{Proof of the Theorem \ref{T1.6}}
    \begin{proof}
        We show the result only for $1<p<N$. The conclusion holds when $p\geq N$ following a similar argument. Let $0<r\leq 1$, $\frac{1}{2}<m'<m\leq \frac{3}{4}$ and let $w\in C_c^\infty(B_{mr}(x_0))$ be such that $w(x)=1$ for all $x\in B_{m'r}(x_0)$, $w(x)\in[0,1]$ for all $x\in B_{mr}(x_0)$ with $|\nabla w|\leq \frac{4}{(m-m')r}$. Let $q\in (1,p)$ and $d>0$, we set $v=u+d$ and $\phi=v^{\frac{p-q}{p}}$. Subsequently, we proceed to the proof as discussed in \cite[Theorem $6.6$]{BGKL2026} except the \textbf{Estimate of $F_3$}.
        Note that \begin{align}\label{eq7.10}
             & {\int_{(0,1)} C_{N,s,p} \left( \ess_{x\in \supp \{w\}} \int_{\mathbb{R}^N\setminus B_r(x_0)}\frac{1}{|x-y|^{N+sp}} \d y\right) \d \mu }\nonumber \\
            \leq & C \sup_{s\in \Sigma}r^{-sp} \mu\{(0,1)\} \leq C r^{-p}.
        \end{align}
       Let $\operatorname{Tail}\big(u_-(y);x_0,R\big)$  be strictly positive. For sufficiently small $\epsilon>0$, we have \begin{align}\label{eq7.11}
           d=\frac{1}{2}\bigg(\frac{r}{R}\bigg)^{\frac{p}{p-1}}  \operatorname{Tail}\big(u_-(y);x_0,R\big)+\epsilon >0.
        \end{align}
       \textbf{New Estimate of $F_3$:} Using \eqref{eq7.10} and \eqref{eq7.11}, we obtain
        \begin{align}\label{eq7.12}
            P_3=&\bigg[ \int_{(0,1)} C_{N,s,p} \left( \ess_{x\in \supp \{w\}} \int_{\mathbb{R}^N\setminus B_r(x_0)}\frac{1}{|x-y|^{N+sp}} \right) \d \mu \nonumber \\
       & + d^{1-p}\bigg(\frac{1}{R}\bigg)^{p}[\operatorname{Tail}\big(u_-(y);x_0,R\big)]^{p-1} \bigg]\bigg(\int_{B_r(x_0)}\phi^p w^p \d x \bigg) \nonumber \\
       \leq & \frac{Cr^{-p}}{(m-m')^p} \bigg( \int_{B_{mr}(x_0)} \phi^p \d x\bigg).
        \end{align}
        Again if $\operatorname{Tail}\big(u_-(y);x_0,R\big)$ is zero, then $d=\epsilon>0$. Using \eqref{eq7.10}, we get the same estimate \eqref{eq7.12}.  
       This completes the proof.
    \end{proof}

\section{Conclusion and Remarks}\label{sec6}

\begin{enumerate}
    \item[1.] The local H\"older continuity of the superposition operator in \eqref{MP}, involving the tails ``$\operatorname{Tail}_1$'' in \eqref{tail1} or ``$\operatorname{Tail}_2$'' in \eqref{tail2}, remains open. More precisely, neither a proof nor a counterexample is currently available. The main difficulty arises from the lack of control of the singular term $\frac{1}{s}$ as $s\to0^+$, particularly in the case $\bar{s}:=\inf_{s\in\Sigma}s=0.$
    We refer to \cite{BGKL2026} for further discussion.
 \item[2.] In the case $\bar{s}:=\inf_{s\in\Sigma}s>0,$ the local H\"older continuity of the superposition operator was established in \cite{BGKL2026}. However, the validity of the Harnack inequality remains unresolved when the corresponding tail is given by ``$\operatorname{Tail}_1$'' in \eqref{tail1} or ``$\operatorname{Tail}_2$'' in \eqref{tail2}. The principal obstruction lies in deriving suitable tail estimates for the superposition operator, in particular in controlling the scaling factor $r^{sp}$ appearing in the tail terms.

 \item[3.] The nonlocal superposition tail defined in \eqref{Tail} is particularly well suited for establishing both local H\"older continuity and the Harnack inequality. This is due to the following structural features:
\begin{enumerate}
    \item[(i)] The normalizing constant $C_{N,p,s}$ compensates for the singular behavior of $\frac{1}{s}$ in the limit $s\to0^+$.
    
    \item[(ii)] The factor $r^p$ ensures the appropriate scaling needed to treat the regime $\bar{s}:=\inf_{s\in\Sigma}s>0.$
    
    \item[(iii)] The same factor $r^p$ also plays a crucial role in obtaining tail estimates for the superposition operator, which are fundamental in the analysis of regularity and Harnack-type inequalities.
\end{enumerate}
\end{enumerate}

\section*{Conflict of interest statement}
On behalf of all authors, the corresponding author states that there is no conflict of interest.

\section*{Data availability statement}
Data sharing is not applicable to this article as no datasets were generated or analysed during the current study.

\section*{Acknowledgement}

SB would like to thank the Council of Scientific and Industrial Research (CSIR), India, for financial assistance to carry out this research work [grant no. 09/0874(17164)/ 2023-EMR-I]. SG gratefully acknowledges the financial support for this research work under ARG-MATRICS, grant No: ANRF/ARGM/2025/001570/MTR, Anusandhan National Research Foundation (ANRF), Government of India. This work was completed while SG and VK were visiting the Ghent Analysis \& PDE Center, Ghent University. They gratefully acknowledge the financial support and excellent research facilities provided by the center. RL expresses sincere gratitude for the financial support provided by the Ministry of Education, Government of India.



\end{document}